\documentclass[11pt]{article}
\usepackage{amsmath,amsthm,amssymb,amsxtra,graphicx}
\usepackage{color}
\usepackage{tikz}
\usepackage{tikz-cd}
\usepackage{mathtools}

\usetikzlibrary{positioning}
\usetikzlibrary{arrows,decorations.pathmorphing,shapes}

\def\ppp{\lower-2pt\hbox{\ \tikz\fill[black] (0,0) circle (1pt);}}
\def\plus{\raisebox{0.95pt}{\scalebox{0.5}{\ensuremath{+}}}}
\allowdisplaybreaks[4]

\newtheorem{theorem}{Theorem}[section]
\newtheorem{lemma}{Lemma}[section]

\newtheorem{corollary}{Corollary}[section]

\newcommand{\xo}{\vec{X}}
\newcommand{\yo}{\vec{Y}}

\newcommand{\sub}[2][8]{\raisebox{-#1pt}{\ensuremath{\scriptstyle #2}}}

\renewcommand{\thefootnote}{\fnsymbol{footnote}}

\date{}

\begin{document}

\title{\bf{A multilinear Zafran theorem for the measure of noncompactness of operators}}

\author{Mieczys{\l}aw~Masty{\l}o and Eduardo~B.~Silva}

\date{}

\maketitle

\noindent
\begingroup
\renewcommand{\thefootnote}{\fnsymbol{footnote}}
\footnotetext[1]{2020 \emph{Mathematics Subject Classification}:
Primary 46M35; Secondary 46B70, 46G25, 47H08, 47H60.}
\footnotetext[2]{\emph{Key words and phrases}:
Measure of noncompactness, multilinear mappings, interpolation functor, interpolation spaces.}
\footnotetext[3]{The first author was supported by the National Science Center, Poland,
project 2019/33/B/ST1/00165. The second author was supported by CNPq, Brazil, Grant 309230/2023-3.}
\endgroup

\begin{abstract}
\noindent
We establish a multilinear analogue of Zafran's interpolation theorem for operators acting on products of quasi-Banach spaces generated by the general real methods. We consider multilinear operators between quasi-Banach couples and obtain interpolation estimates expressed in terms of the fundamental functions of the underlying sequence spaces. As an application, we study the measure of noncompactness of interpolated multilinear operators in this setting and derive a corresponding interpolation estimate for this quantity. In particular, we obtain a one-sided compactness result for interpolated multilinear operators acting on spaces associated with the abstract real methods. These results recover known bilinear theorems as special cases and extend earlier results on multilinear interpolation and on the interpolation of the measure of noncompactness.
\end{abstract}

\vspace{5 mm}


\section{Introduction}

Measures of noncompactness are quantitative set functions that describe how far a bounded subset of a metric space is from being relatively compact. They play an important role in spectral theory and in the study of differential and integral equations. These notions also give rise, in a natural way, to quantitative characteristics of bounded operators through the images of bounded sets.

Determining the exact value of a measure of noncompactness is usually difficult. For applications, it is therefore useful to develop general methods for estimating such quantities for operators. This leads naturally to the problem of understanding how measures of noncompactness behave under interpolation. Even in the linear setting, such questions depend strongly on the interpolation method under consideration and on the endpoint assumptions imposed on the operator. At the same time, interpolation of nonlinear operators by abstract interpolation methods has become an active area of research, partly because of its applications to partial differential equations. Bilinear interpolation provides a notable example of this line of investigation.

In recent years, considerable attention has been devoted to the stability of compactness for compact bilinear operators under various interpolation methods. Roughly speaking, the problem is to determine whether an operator acting on products associated with Banach or quasi-Banach couples, and satisfying compactness assumptions at one or both endpoint spaces, remains compact on the corresponding interpolation spaces.

Compact bilinear operators arise naturally in harmonic analysis; see, for example, \cite{benyi1,benyi2}. In particular, results from \cite{benyi2} show that commutators of bilinear Calder\'on--Zygmund operators with symbols in the $C\!M\!O$ subspace of $B\!M\!O$ provide examples of compact bilinear operators mapping $L_p \times L_q$ into $L_r$, where $1 < p,q < \infty$ and $1/r = 1/p + 1/q \le 1$.

These results have motivated the study of interpolation properties of compact bilinear operators, a problem already considered by Calder\'on \cite{Cal} in his pioneering work on the complex interpolation method. More recently, the behavior of bilinear operators under the real interpolation method has also attracted considerable attention.

Understanding how compact bilinear operators behave under interpolation naturally leads to quantitative questions. In particular, one is led to investigate the behavior of the measure of noncompactness of a bilinear operator under interpolation. For linear operators, interpolation formulas for measures of noncompactness have been extensively studied. Analogous results are also known for two important extensions of the real method, namely the real method with a function parameter and the general real method. In \cite{MS} we developed an abstract approach that transfers results from the linear case to bilinear operators between Banach couples. The main arguments there rely on duality relations in interpolation theory.

Zafran \cite{Zafran} proved a multilinear interpolation theorem in the classical setting of Lions--Peetre spaces. One of the main aims of the present article is to establish a multilinear analogue of this theorem for spaces generated by the general real methods. This interpolation theorem is one of the main tools of the article and plays a central role in our study of the measure of noncompactness of multilinear operators in this framework.

In this article, we study multilinear interpolation and the behavior of the measure of noncompactness for multilinear operators acting between quasi-Banach spaces associated with the general real method. Our approach is direct and is based on properties of vector-valued sequence spaces arising in the construction of this method. The main idea is to decompose the operator into smaller parts by means of families of projections on sequence spaces. For each part, we estimate the measure of noncompactness, in a way related to the approach in \cite{BC} for bilinear operators acting between quasi-Banach spaces. As a consequence of the main result, we also obtain a one-sided compactness theorem for interpolated multilinear operators in this abstract setting.

These results recover several known multilinear interpolation theorems for real methods as special cases, including the classical theorem of Zafran for Lions--Peetre spaces. In the classical setting, we also show that the corresponding condition on the interpolation parameters is sharp. Moreover, the results extend earlier work on the interpolation of the measure of noncompactness to the multilinear framework. For further information on multilinear interpolation, we refer to the works of Calder\'on \cite{Cal}, Lions--Peetre \cite{LP}, and Zafran \cite{Zafran}. For compact bilinear operators and the interpolation of measures of noncompactness, we refer to the works of Cobos and collaborators \cite{BC,CLM1}, and to the references therein.

After the completion of an earlier version of the present work, we became aware of the recent paper of Cobos, Fern\'andez-Cabrera and K\"uhn \cite{CFCK}, in which the authors investigate compact multilinear operators under the classical Lions--Peetre real interpolation method between quasi-Banach spaces and obtain, as an application, a reinforced multilinear Marcinkiewicz theorem. The results of the present article are formulated in the setting of the general real method and provide quantitative estimates for measures of noncompactness; in particular, they yield a one-sided compactness theorem for interpolated multilinear operators.

\medskip

\noindent\textit{Note on the submission history.}
An earlier version of this manuscript was submitted to \textit{Constructive Approximation} on March $26, 2025$. On March $16, 2026$, we were informed by the journal that, owing to an editorial oversight, no further editorial action had been taken on the manuscript. We subsequently withdrew the manuscript in May 2026.

\section{Notation and preliminary results}

We will consider multilinear mappings acting on quasi-Banach spaces. Recall that a quasi-normed space is a vector space $X$ endowed with a map $\|\cdot\| \colon X \to [0,\infty)$, also denoted by $\|\cdot\|_X$, such that

\begin{itemize}
\item [{\rm(i)}] $\|x\| = 0$ if and only if $x = 0$;
\item [{\rm(ii)}] $\|\lambda x\| = |\lambda| \|x\|$ for all $\lambda \in \mathbb{K}$ and $x \in X$;
\item [{\rm(iii)}] there exists a constant $\kappa \ge 1$ such that
\[
\|x+y\| \le \kappa \big(\|x\|+\|y\|\big)
\]
for all $x,y \in X$.
\end{itemize}

The smallest constant $\kappa$ for which {\rm(iii)} holds is called the modulus of concavity of the quasi-norm and is denoted by $c_X$. If $c_X=1$, then $\|\cdot\|$ is a norm.

\noindent
A quasi-norm $\|\cdot\|$ on $X$ is called a $p$-norm ($0 < p < 1$) if it is $p$-subadditive, that is, if it satisfies
\[
\|x + y\|^p \leq \|x\|^p + \|y\|^p, \quad\, x, y \in X\,.
\]
A $p$-subadditive quasi-norm $\|\cdot\|_X$ induces a metric topology on $X$. In fact, a metric can be defined by $d_p(x, y) = \|x - y\|^p$ for all $x, y \in X$.
The space $X$ is called a quasi-Banach space if $X$ is complete with respect to this metric.

Recall that the classical Aoki-Rolewicz theorem (see \cite{Rolewicz}) states that for every quasi-normed space $(X, \|\cdot\|)$ with modulus of concavity $\kappa > 1$, there exists 
$0 < p < 1$ and an equivalent $p$-norm $\|\cdot\|_*$ such that
\[
\|x\|_{*} \leq \|x\| \leq 2 \kappa \|x\|_{*}, \quad\, x \in X\,.
\]
Note that the $p$-norm $\|\cdot\|_{*}$ is given by
\[
\|x\|_{*} = \inf \bigg\{\Big( \sum_{k=1}^n \|x_k\|^{p} \Big)^{1/p} : \, x = \sum_{k=1}^n x_k, x_k \in X, n \in \mathbb{N} \bigg\}\,.
\]
For further information on the theory of quasi-Banach spaces, we refer to the books by Rolewicz \cite{Rolewicz} and Kalton-Peck-Roberts \cite{KPR}.

As usual, an operator $T \colon X \to Y$ between quasi-normed spaces is a continuous linear mapping. If $X_1, \ldots, X_n$ are quasi-normed spaces, 
then the product space $X_1 \times \cdots \times X_n$ is equipped with the standard quasi-norm
\[
\|(x_1, \ldots, x_n)\| := \max\{\|x_1\|_{X_1}, \ldots, \|x_n\|_{X_n}\}, \quad\, (x_1, \ldots, x_n) \in X_1 \times \cdots \times X_{n}\,.
\]
An $n$-linear mapping $T \colon X_1 \times \cdots \times X_n \to Y$ between quasi-normed spaces is called bounded if
\[
\|T\| := \sup\big\{\|T(x_1, \ldots, x_n)\|_{Y} : \, (x_1, \ldots, x_n) \in B_{X_1} \times \cdots \times B_{X_n}\big\} < \infty\,,
\]
where $B_X$ (or $B_{(X, \|\cdot\|_X)}$) denotes the closed unit ball in the quasi-normed space $X := (X, \|\cdot\|_X)$, that is,
$B_X := \{x \in X : \, \|x\|_X \leq 1\}$.  

The space of all bounded multilinear mappings $T \colon X_1 \times \cdots \times X_n \to Y$ is denoted by $L(X_1 \times \cdots \times X_n; Y)$. An $n$-linear mapping 
$T \colon X_1 \times \cdots \times X_n \to Y$ is said to be compact if $T(B_{X_1} \times \cdots \times B_{X_n})$ is precompact.

Let $\xo = (X^0, X^1)$ be a quasi-Banach couple (or $p$-normed quasi-Banach couple), that is, two ($p$-normed) quasi-Banach spaces $X^0$ and $X^1$ which are continuously 
embedded in the same Hausdorff topological vector space. For $t > 0$, Peetre’s $K$- and $J$-functionals are defined by
\[
K(t, x) = K(t, x; X^0, X^1) := \inf\big\{\|x_0\|_{X^0} + t \|x_1\|_{X^1} : \, x = x_0 + x_1\big\}, \quad\, x \in X^0 + X^1
\]
and
\[
J(t, x) = J(t, x; X^0, X^1) := \max\big(\|x\|_{X^0}, t \|x\|_{X^1}\big), \quad\, x \in X^0 \cap X^1\,.
\]

The functionals $K(t, \cdot)$ and $J(t, \cdot)$ are quasi-norms on $X^0 + X^1$ and $X^0 \cap X^1$, respectively, and the quasi-triangle 
inequality is satisfied with the constant $c = \max\{c_{X^0}, c_{X^1}\}$.

If $\|\cdot\|_{X^0}$ and $\|\cdot\|_{X^1}$ are $p$-norms, then $J(t, \cdot)$ is also a $p$-norm on $X^0 \cap X^1$, and the functional
\[
K_p(t,x) = \inf\big\{\big(\|x_0\|^p_{X^0} + t^p \|x_1\|^p_{X^1}\big)^{1/p} : \, x = x_0 + x_1 \,, x_j \in X^j \,, j = 0,1\big\},
\]
is a $p$-norm on $X^0 + X^1$, which is equivalent to $K(t, \cdot)$. Namely,
\[
K(t,x) \leq K_p(t,x) \leq 2^{1/p} K(t,x), \quad\, x \in X^0 + X^1\,.
\]

Let $E$ be a quasi-Banach sequence lattice. We say that $E$ is $K$-non-trivial if $(\min(1, 2^m))_{m \in \mathbb{Z}} \in E$. The lattice $E$ is said 
to be $(p, J)$-non-trivial, $0 < p \leq 1$, if
\[
\sup \bigg\{ \Big( \sum_{m=-\infty}^\infty (\min(1,2^{-m}) |\xi_m|)^p \Big)^{1/p} : \|(\xi_m)\|_E \leq 1 \bigg\} < \infty\,.
\]
Note that if $E$ is $(p, J)$-non-trivial then $E$ is also $(r, J)$-non-trivial for any $p \leq r \leq 1$. 

Let $\omega(\mathbb{Z})$ be the space of all real sequences indexed by $\mathbb{Z}$, and let $E \subset \omega(\mathbb{Z})$ be a $K$-non-trivial quasi-Banach sequence 
lattice on $\mathbb{Z}$. Additionally, let $\xo = (X^0, X^1)$ denote a quasi-Banach couple. The general real interpolation space realized by means of the $K$-functional 
$K_E(\xo)$ (denoted also by $\xo_{E, K}$) consists of all $x \in X^0 + X^1$ such that $(K(2^m, x))_{m \in \mathbb{Z}} \in E$. The quasi-norm on $\xo_{E, K}$ is given by 
$\|x\|_{K_E(\xo)} := \|(K(2^m, x))_{m \in \mathbb{Z}}\|_E$.

If $E$ is a $(p, J)$-non-trivial quasi-Banach sequence lattice and $\xo = (X^0,X^1)$ is a $p$-normed quasi-Banach couple, the general real interpolation space 
realized by means of the $J$-functional $J_E(\xo)$ (denoted also by $\xo_{E,J}$) is defined as the set of all $x \in X^0 + X^1$ which admits a representation as 
$x = \sum_{m=-\infty}^\infty u_m$, with convergence in $X^0 + X^1$, where $(u_m) \subset X^0 \cap X^1$, and $(J(2^m, u_m))_{m \in \mathbb{Z}} \in E$. The quasi-norm on $\xo_{E,J}$ 
is given by  
\[
\|x\|_{J_E(\xo)} = \inf\bigg\{ \|(J(2^m, u_m))_{m \in \mathbb{Z}}\|_{E} : \, x = \sum_{m=-\infty}^\infty u_m \bigg\}\,.
\]
We have that $X^0 \cap X^1 \hookrightarrow K_E(X^0,X^1) \hookrightarrow J_E(X^0,X^1) \hookrightarrow X^0 + X^1$, where $\hookrightarrow$ means continuous inclusion. 
The embedding $J_E(X^0,X^1) \hookrightarrow K_E(X^0,X^1)$ holds since the Calderón transform given by the formula
\[
\Omega_p((\xi_m)) = \bigg( \Big( \sum_{k=-\infty}^\infty (\min(1, 2^{m-k}) |\xi_k|)^p \Big)^{1/p}\bigg)_{m \in \mathbb{Z}}
\]
is bounded on $E$. Hence, it holds that if 
\begin{equation}
E \text{ is } K\text{-non-trivial, } (p, J)\text{-non-trivial, and } \Omega_p \text{ is bounded on } E\,,
\end{equation}
then, for any $p$-normed quasi-Banach couple $\xo = (X^0, X^1)$, we have that $\xo_{E, K} = \xo_{E,J}$ up to equivalence of quasi-norms. In this case, for simplicity 
of notation, we write $\xo_E$ (or $(X^0, X^1)_E$), and use $\|\cdot\|_{\xo_E}$ to denote either of the two quasi-norms. 

For $0 < q \leq \infty$, $\ell_q$ is the usual sequence space of scalar sequences indexed by $\mathbb{Z}$. Let $(\lambda_m) \in \omega(\mathbb{Z})$ be a positive 
sequence and let $(W_m)$ be a sequence of quasi-Banach spaces with the same constant $c \geq 1$ in the quasi-triangle inequality for all $W_m$. We define
\[
\ell_q(\lambda_m W_m) = \{w = (w_m) : w_m \in W_m \mbox{ and } (\lambda_m \|w_m\|) \in \ell_q \}\,.
\]
The quasi-norm in $\ell_q(\lambda_m W_m)$ is given by $\|w\|_{\ell_q(\lambda_m W_m)} = \|(\lambda_m \|w_m\|_{W_m})\|_{\ell_q}$. We define the space $E(\lambda_m W_m)$ similarly. 
If $W_m = \mathbb{R}$ or $W_m = \mathbb{C}$ for all $m \in \mathbb{Z}$, then we simply write $\ell_q(\lambda_m)$.

For any $k \in \mathbb{Z}$, the shift operator $\tau_k \colon \omega(\mathbb{Z}) \to \omega(\mathbb{Z})$ is defined by 
\[
\tau_k(\xi) = (\xi_{m + k})_{m \in \mathbb{Z}}, \quad\, \xi = (\xi_m) \in \omega(\mathbb{Z})\,.
\]
Throughout the paper, we consider quasi-Banach lattices on $\mathbb{Z}$ such that $\tau_k$ is bounded on $E$ for all $k \in \mathbb{Z}$.
Clearly, for each $m, n \in \mathbb{Z}$, we have
\[
\|\tau_{n+m}\|_{E \to E} \leq \|\tau_{n}\|_{E \to E} \|\tau_m\|_{E \to E}.
\]
Additionally, we assume that 
\begin{equation} \label{cite1}
\lim_{n \to \infty} 2^{-n} \|\tau_n\|_{E \to E} = \lim_{n \to \infty} \|\tau_{-n}\|_{E \to E} = 0 \,.
\end{equation}
Following \cite{Szw}, we define $\varphi_E\colon (0, \infty) \to (0, \infty)$ by the formula
\[
\varphi_E(t) := \big\|\tau_{[\log_2 t]}\big\|_{E\to E}, \quad\, t > 0\,,
\]
where the logarithm is taken in base $2$ and $[\,\cdot\,]$ is the greatest integer function.

If we let $\gamma_1 = \max(1, \|\tau_1\|_{E \to E})$, $\gamma_2 = \sup\{\varphi_E(t): \, 0 < t \leq 1\} = \sup\{ \|\tau_{-n}\|_{E \to E}: \, n \geq 0\}$, and 
$\gamma_3 = \sup \{\varphi_E(t)/t: \, 1 \leq t < \infty\} = \sup \{2^{-n} \|\tau_n\|_{E \to E} : n \geq 0\}$, we have that the following easily verified properties 
hold for the function $\varphi_E$, which we will use without further reference (for more details, see \cite{Szw}):
\begin{itemize}
\item[$\ppp$] $\varphi_E(t) = o(\max(1,t))$ as $t \to 0\plus$ or $t \to \infty$.
\item[$\ppp$] $\varphi_E(s t) \leq \gamma_1 \varphi_E(s) \varphi_E(t)$ and $\varphi_E(2^m s) \leq \varphi_E(2^m) \varphi_E(s)$,\, $s, t > 0$, $m \in \mathbb{Z}$.
\item[$\ppp$] $\varphi_E(s) \leq \gamma_1 \gamma_2 \varphi_E(t)$ and $\frac{\varphi_E(t)}{t} \leq \gamma_1 \gamma_3 \frac{\varphi_E(s)}{s}$,\, $0 < s < t$.
\end{itemize}

For fundamental notation in interpolation theory, we refer to \cite{BL} and \cite{BK}. Given two quasi-Banach couples $(X^0, X^1)$ and $(Y^0, Y^1)$, the notation 
$T \colon (X^0, X^1) \to (Y^0, Y^1)$ indicates that $T \colon X^0 + X^1 \to Y^0 + Y^1$ is a linear operator such that the restrictions of $T$ to each of the spaces 
$X^j$ (for $j = 0, 1$) are bounded operators from $X^j$ to $Y^j$.

\section{Measures of noncompactness and convolution}

Let us recall the definition of the \emph{measure of noncompactness}, which was first introduced by Kuratowski (1930). Let $(M, \rho)$ be a complete metric space, 
and let $\mathcal{B}$ denote the collection of nonempty, bounded subsets of $M$. The Kuratowski \emph{measure of noncompactness} 
$\alpha_{M} \colon \mathcal{B} \to \mathbb{R}_{+}$ (denoted $\alpha$ for brevity) is defined for any $B \in \mathcal{B}$ as  
\[
\alpha_{M}(B) := \inf \Big\{ \delta > 0 \mid B \subset \bigcup_{i=1}^n A_i, \; \text{where} \; \text{diam}(A_i) 
\leq \delta, \; 1 \leq i \leq n, \; n \in \mathbb{N} \Big\},
\]
where $\text{diam}(A) := \sup \left\{ \rho(x, y) \mid x, y \in A \right\}$ denotes the diameter of a set $A \in \mathcal{B}$.  
It is straightforward to verify that  
\[
\alpha(B) = \alpha(\bar{B}), \quad \text{and} \quad \alpha(B) = 0 \quad \text{if and only if} \quad B \text{ is relatively compact}.
\]  
Clearly, $\alpha$ is a non-decreasing set function, meaning that if $A \subset B \in \mathcal{B}$, then  
\[
\alpha(A) \leq \alpha(B).
\]  

A commonly used measure is the \emph{Hausdorff (or ball) measure of noncompactness}. This function,  
$\chi_{M} \colon \mathcal{B} \to \mathbb{R}_{+}$ (denoted $\chi$ for brevity), is defined for $B \in \mathcal{B}$ as  
\[
\chi_{M}(B) := \inf \Big\{ r > 0 \mid B \subset \bigcup_{i=1}^{n} B(x_i, r), \; \text{where} \; x_i \in M, \; 1 \leq i \leq n, \; n \in \mathbb{N} \Big\},
\]
where $B(x, r) := \left\{ y \in M: \, \rho(x, y) \leq r \right\}$ denotes the closed ball centered at $x$ with radius $r > 0$.  

We list below some of the most important properties of the measure of noncompactness $\psi$, where $\psi := \alpha\sub[4]{X}$ or $\psi := \chi\sub[4]{X}$, 
in the case of Banach spaces. Let $A$ and $B$ be bounded subsets of a Banach space $X$. Then, we have:

\begin{itemize}
\item[{\rm (i)}] $\widetilde{\beta}(T \colon X_1 \times \cdots \times X_n \to Y) \leq \|T\|$;
\item[{\rm (ii)}] $\widetilde{\beta}(T) \leq \beta(T) \leq 2 \widetilde{\beta}(T)$;
\item[{\rm (iii)}] $T$ is compact if and only if $\beta(T) = 0$;
\item[{\rm (iv)}] $\widetilde{\beta}(T) \leq 2 c_Z \widetilde{\beta}(R T)$ for any metric injection $R \colon Y \to Z$, that is, $\|Ry\|_Z = \|y\|_Y$ for all $y \in Y$, where $Z$ is a quasi-Banach space;
\item[{\rm (v)}] If $Z_1, \ldots, Z_n$ are quasi-Banach spaces and $P_1, \ldots, P_n$ are bounded linear operators, $P_i \in L(Z_i, X_i)$, then 
\[
\widetilde{\beta}(T(P_1, \ldots, P_n)) \leq \|P_1\|_{L(Z_1, X_1)} \cdots \|P_n\|_{L(Z_n, X_n)} \widetilde{\beta}(T).
\]
  
Moreover, if for any $x_i \in X_i$, with $\|x_i\|_{X_i} < 1$, $i = 1, \ldots, n$, there exist $z_i \in Z_i$ with $\|z_i\|_{Z_i} < 1$, and $P_i(z_i) = x_i$, then
\[
\widetilde{\beta}(T) \leq \widetilde{\beta}(T(P_1, \ldots, P_n)).
\]
  
\item[{\rm (vi)}] $\widetilde{\beta}(S + T) \leq \widetilde{\beta}(S) + \widetilde{\beta}(T)$.
\end{itemize}

Now, we introduce the multilinear version of convolution. If $\xi_i = (\xi_{i,m})_{m \in \mathbb{Z}}$ are sequences of 
non-negative scalars, $1 \leq i \leq n$, their convolution is given by:
\[
\xi_1 \ast \cdots \ast \xi_n = \bigg(\sum_{k_{n-1}=-\infty}^\infty \sum_{k_{n-2}=-\infty}^\infty 
\cdots \sum_{k_1=-\infty}^\infty \xi_{1,k_{n-1}} \xi_{2,k_{n-2}} \cdots \xi_{n-1,k_1} \xi_{n,m - (k_1 + k_2 + \cdots + k_{n-1})}\bigg)_{m \in \mathbb{Z}}\,.
\]

\medskip

\begin{theorem}\label{conv}
Let $n\ge 2$. Let $\xo_i=(X_i^0,X_i^1)$, $1\le i\le n$, be a $p_i$-normed quasi-Banach couple, and let $\yo=(Y^0,Y^1)$ be an $r$-normed quasi-Banach couple, where $0<p_i,r\le 1$. Suppose that $E_1$ and $E$ are $K$-non-trivial quasi-Banach sequence lattices, and that $E_2,\ldots,E_n$ are, respectively, $(p_2,J),\ldots,(p_n,J)$-non-trivial quasi-Banach sequence lattices satisfying condition \eqref{cite1}.

Furthermore, suppose that there exists a constant $\gamma > 0$ such that for all sequences $\xi_1 \in E_1, \ldots, \xi_n \in E_n$, the following estimate holds:
\begin{equation}\label{eq1A}
\big\|\big(|\xi_1|^r \ast \cdots \ast |\xi_n|^r\big)^{1/r}\big\|_E
\leq \gamma \|\xi_1\|_{E_1} \cdots \|\xi_n\|_{E_n}\,.
\end{equation}

Let $T \colon \xo_1 \times \cdots \times \xo_n \to \yo$ be an $n$-linear mapping, and let
\[
\|T\|_j := \|T\|_{L(X^j_1,\ldots,X^j_n;Y^j)}, \quad\, j=0,1\,.
\]
Then the restriction of $T$ is bounded from $K_{E_1}(\xo_1) \times J_{E_2}(\xo_2) \times \cdots \times J_{E_n}(\xo_n)$ to $K_E(\yo)$, and its norm satisfies
\[
\|T\| \leq C \|T\|_0 \varphi_{E_2}\bigg(\bigg(\frac{\|T\|_1}{\|T\|_0}\bigg)^{1/(n-1)}\bigg) \cdots \varphi_{E_n}\bigg(\bigg(\frac{\|T\|_1}{\|T\|_0}\bigg)^{1/(n-1)}\bigg),
\]
provided that $\|T\|_0>0$ and $\|T\|_1>0$, where $C$ is a constant independent of $T$. If $\|T\|_0=0$ or $\|T\|_1=0$, then $\|T\|=0$.
\end{theorem}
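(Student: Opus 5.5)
The argument has two stages: first a pointwise $K$-functional estimate obtained by decomposing the $J$-arguments and convolving, then a rescaling of the $Y$-couple to extract the $\varphi_{E_i}$ factors.

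\emph{Step 1: pointwise estimate.} Fix $x_1\in K_{E_1}(\xo_1)$ and $x_i\in J_{E_i}(\xo_i)$ for $2\le i\le n$. For each $i\ge 2$ choose a $J$-representation $x_i=\sum_{k}u_{i,k}$ with $\|(J(2^k,u_{i,k}))_k\|_{E_i}\le 2\|x_i\|_{J_{E_i}}$. By multilinearity and the continuity of $T$ on $X_1^0+X_1^1$ (which uses the $(p_i,J)$-non-triviality to get convergence in the sums), we have
\[
T(x_1,\ldots,x_n)=\sum_{k_1,\ldots,k_{n-1}} T\big(x_1,u_{2,k_{n-1}},\ldots,u_{n,k_1}\big).
\]
For a fixed multi-index, write $x_1=a_0+a_1$ as a near-optimal decomposition for $K(2^{s},x_1)$ with $s=m-(k_1+\cdots+k_{n-1})$. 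Then $T(a_0,u_{2,\cdot},\dots)\in Y^0$ and $T(a_1,u_{2,\cdot},\dots)\in Y^1$, and
\[
K\big(2^m,T(x_1,u_{2,k_{n-1}},\ldots,u_{n,k_1})\big)\lesssim \max(\|T\|_0,\|T\|_1)\,K(2^{s},x_1)\prod_i J(2^{k},u_{i,k}).
\]
Here one uses $\|u\|_{X^0}\le J(2^k,u)$ and $2^{k}\|u\|_{X^1}\le J(2^k,u)$, so the factors $2^m=2^{s}\prod 2^{k}$ distribute correctly. Since $K_r(2^m,\cdot)$ is an $r$-norm equivalent to $K$ on $Y^0+Y^1$, summing $r$-th powers gives
\[
K(2^m,T(x_1,\ldots,x_n))\lesssim \max(\|T\|_0,\|T\|_1)\,\big(|\xi_1|^r\ast\cdots\ast|\xi_n|^r\big)_m^{1/r},
\]
with $\xi_1=(K(2^k,x_1))_k$ and $\xi_i=(J(2^k,u_{i,k}))_k$. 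Applying \eqref{eq1A} yields boundedness with constant $C\max(\|T\|_0,\|T\|_1)$.

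\emph{Step 2: extracting $\varphi_{E_i}$.} To improve the constant, rescale. Replace $\yo$ by $(Y^0,\lambda^{-1}Y^1)$ with $\lambda=\|T\|_1/\|T\|_0$, or equivalently rescale each $\xo_i$ for $i\ge2$ by $t=\lambda^{1/(n-1)}$: give $X_i^1$ the norm $t\|\cdot\|_{X_i^1}$. Then $T$ has norm $\|T\|_0$ at both endpoints, and Step 1 gives the bound $C\|T\|_0$ relative to the rescaled $J$-spaces. The point to check is that rescaling the second space of a couple by $t$ shifts the $J$-functional: $J(2^k,u;X^0,tX^1)=J(2^kt,u;X^0,X^1)$. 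For $t=2^{j}$ this is an index shift by $j$, so $\|x\|_{J_{E_i}(X_i^0,tX_i^1)}\le \|\tau_{-j}\|$-type constant $\times\|x\|_{J_{E_i}(\xo_i)}$, namely $\lesssim\varphi_{E_i}(t)$ after matching the sign convention of $\tau$. For general $t$ one rounds $t$ to a dyadic value and absorbs the error using $\gamma_1,\gamma_2,\gamma_3$ and the submultiplicativity properties of $\varphi_E$. Nothing needs to be done for $K_{E_1}(\xo_1)$ and $K_E(\yo)$, since those couples are not rescaled. Multiplying the $n-1$ factors gives the claimed bound.

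\emph{Degenerate cases and main obstacle.} If $\|T\|_1=0$, the Step 1 estimate applied after rescaling by a small $\lambda$ gives a bound $C\|T\|_0\prod\varphi_{E_i}(t)$ with $t\to0$. Here $\varphi_{E_i}(t)\to0$ by \eqref{cite1}, so $\|T\|=0$. The symmetric case $\|T\|_0=0$ uses $\varphi(t)/t\to0$ as $t\to\infty$ together with $\|T\|_1$ replacing $\|T\|_0$. The main obstacle is Step 2's bookkeeping: confirming the direction of the shift, i.e.\ that it is $\tau_{[\log_2 t]}$ and not its inverse that appears, and verifying that the rescaled couples still satisfy the hypotheses with uniform constants so that $C$ is independent of $T$. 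I would first do the computation for dyadic $t$ and check it against the classical case $E_i=\ell_q(2^{-\theta_i m})$, where $\varphi(t)=t^{\theta_i}$ and the bound should reduce to $\|T\|_0^{1-\theta}\|T\|_1^{\theta}$-type Zafran estimates.
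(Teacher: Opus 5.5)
Your proposal is correct and is essentially the paper's proof. The paper builds the shift $\nu$, chosen so that $2^{(n-1)\nu}\le \sigma_1/\sigma_0<2^{(n-1)(\nu+1)}$, directly into the pointwise estimate through $J(2^{k_i+\nu},u_i)$ and the re-indexed representations $x_i=\sum_k u^i_{k+\nu}$. This is exactly your identity $J(2^k,u;X_i^0,2^\nu X_i^1)=J(2^{k+\nu},u;X_i^0,X_i^1)$ combined with $\|\tau_\nu\|_{E_i\to E_i}=\varphi_{E_i}(2^\nu)$, so the shift direction you worried about is the right one. Since $\varphi_{E_i}$ is constant on $[2^\nu,2^{\nu+1})$, rounding costs nothing beyond the factor $2^{n-1}$. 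One aside is off: rescaling $\yo$ is not ``equivalent'', since it would produce shifts in $E$ rather than in the $E_i$, and \eqref{cite1} is not assumed for $E$. This does not affect your argument, because you actually rescale only $\xo_2,\ldots,\xo_n$.
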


\begin{proof}
Let $\sigma_j > \|T\|_j$, $j = 0,1$. Choose $\nu \in \mathbb Z$ such that
\[
2^{(n-1)\nu} \le \frac{\sigma_1}{\sigma_0} < 2^{(n-1)(\nu+1)}\,.
\]

Let $x \in K_{E_1}(\xo_1)$ and $u_i \in X_i^0 \cap X_i^1$, $2 \le i \le n$.
Fix $m,k_2,\ldots,k_n \in \mathbb Z$. If $x = x_0 + x_1$, where $x_j \in X_1^j$,
$j=0,1$, then
\[
T(x,u_2,\ldots,u_n)
=
T(x_0,u_2,\ldots,u_n)
+
T(x_1,u_2,\ldots,u_n)\,.
\]
Then we obtain
\begin{eqnarray*}
\lefteqn{K(2^m, T(x,u_2,\ldots,u_n))} \nonumber \\
&=& K(2^m,T(x_0,u_2,\ldots,u_n) + T(x_1,u_2,\ldots,u_n)) \nonumber \\
&\leq& \|T(x_0,u_2,\ldots,u_n)\|_{Y^0} + 2^m \|T(x_1,u_2,\ldots,u_n)\|_{Y^1} \nonumber \\
&\leq& \|T\|_0 \|x_0\|_{X_1^0} \|u_2\|_{X_2^0} \cdots \|u_n\|_{X_n^0}  + 2^m \|T\|_1 \|x_1\|_{X_1^1} \|u_2\|_{X_2^1} \cdots \|u_n\|_{X_n^1} \nonumber \\
&\leq& \sigma_0 \|x_0\|_{X_1^0} \|u_2\|_{X_2^0} \cdots \|u_n\|_{X_n^0} + 2^m \sigma_1 \|x_1\|_{X_1^1} \|u_2\|_{X_2^1} \cdots \|u_n\|_{X_n^1} \nonumber \\
&=& \sigma_0 \|x_0\|_{X_1^0} \|u_2\|_{X_2^0} \cdots \|u_n\|_{X_n^0} \nonumber \\
&& \quad\, + 2^m 2^{-((k_2+\nu)+\cdots+(k_n+\nu))} 2^{k_2+\nu} \cdots 2^{k_n+\nu} \sigma_1
\|x_1\|_{X_1^1} \|u_2\|_{X_2^1} \cdots \|u_n\|_{X_n^1} \nonumber \\
&=& \sigma_0 \|x_0\|_{X_1^0} \|u_2\|_{X_2^0} \cdots \|u_n\|_{X_n^0} \nonumber \\
&& \quad\, + 2^{m-k_2-\cdots-k_n-(n-1)\nu} 2^{k_2+\nu} \cdots 2^{k_n+\nu} \sigma_1
\|x_1\|_{X_1^1} \|u_2\|_{X_2^1} \cdots \|u_n\|_{X_n^1} \nonumber \\
&\leq& \max(\sigma_0,2^{-(n-1)\nu}\sigma_1)\big(
\|x_0\|_{X_1^0} \|u_2\|_{X_2^0} \cdots \|u_n\|_{X_n^0} \nonumber \\
&& \quad\, + 2^{m-k_2-\cdots-k_n} \|x_1\|_{X_1^1}
2^{k_2+\nu}\|u_2\|_{X_2^1} \cdots 2^{k_n+\nu}\|u_n\|_{X_n^1}
\big) \nonumber \\
&\leq& \max(\sigma_0,2^{-(n-1)\nu}\sigma_1)\bigg(
\|x_0\|_{X_1^0} \max(\|u_2\|_{X_2^0},2^{k_2+\nu}\|u_2\|_{X_2^1}) \cdots \nonumber \\
&& \qquad\qquad \cdots \max(\|u_n\|_{X_n^0},2^{k_n+\nu}\|u_n\|_{X_n^1}) \nonumber \\
&& \quad\, + 2^{m-k_2-\cdots-k_n} \|x_1\|_{X_1^1}
\max(\|u_2\|_{X_2^0},2^{k_2+\nu}\|u_2\|_{X_2^1}) \cdots \nonumber \\
&& \qquad\qquad \cdots \max(\|u_n\|_{X_n^0},2^{k_n+\nu}\|u_n\|_{X_n^1})
\bigg) \nonumber \\
&=& \max(\sigma_0,2^{-(n-1)\nu}\sigma_1)
J(2^{k_2+\nu},u_2) \cdots J(2^{k_n+\nu},u_n) \nonumber \times \big(\|x_0\|_{X_1^0} + 2^{m-k_2-\cdots-k_n}\|x_1\|_{X_1^1}\big)\,.
\end{eqnarray*}
Taking the infimum over all decompositions $x = x_0 + x_1$, we get
\begin{eqnarray*}
\lefteqn{K(2^m, T(x,u_2,\ldots,u_n))} \nonumber \\
&\leq& \max(\sigma_0,2^{-(n-1)\nu}\sigma_1)
J(2^{k_2+\nu},u_2) \cdots J(2^{k_n+\nu},u_n) \nonumber \\
&& \quad\, \times K(2^{m-k_2-\cdots-k_n},x)\,.
\end{eqnarray*}
Since
\[
\frac{\sigma_1}{\sigma_0} < 2^{(n-1)(\nu+1)}\,,
\]
we have
\[
\max(\sigma_0,2^{-(n-1)\nu}\sigma_1)
\leq
\max\big(\sigma_0,2^{-(n-1)\nu}2^{(n-1)(\nu+1)}\sigma_0\big)
=
2^{n-1}\sigma_0\,,
\]
and hence
\begin{equation}\label{eq2A}
K(2^m, T(x,u_2,\ldots,u_n))
\leq
2^{n-1}\sigma_0 \, K(2^{m-k_2-\cdots-k_n},x)
J(2^{k_2+\nu},u_2) \cdots J(2^{k_n+\nu},u_n)\,.
\end{equation}

Now let $x_i \in J_{E_i}(\xo_i)$, $2 \le i \le n$. Consider decompositions
\[
x_i = \sum_{k_i=-\infty}^{\infty} u^i_{k_i}
\quad\, \text{(convergence in $X_i^0 + X_i^1$)}\,,
\]
where $u^i_{k_i} \in X_i^0 \cap X_i^1$. Since the above series converges absolutely in $X_i^0 + X_i^1$, it follows that for each integer $\nu$
\[
x_i = \sum_{k_i=-\infty}^{\infty} u^i_{k_i+\nu}
\quad\, \text{(convergence in $X_i^0 + X_i^1$)}\,.
\]

Recall that $K_r$ is an $r$-norm on $Y^0 + Y^1$, which is equivalent to $K$, so
\[
K(t,\cdot) \leq K_r(t,\cdot) \leq C_1 K(t,\cdot)\,.
\]
In consequence,
\begin{eqnarray}\label{eq3A}
\lefteqn{K(2^m,T(x,x_2,\ldots,x_n))
=K\Big(2^m,
\sum_{k_2=-\infty}^{\infty} \cdots \sum_{k_n=-\infty}^{\infty}
T(x,u^2_{k_2+\nu},\ldots,u^n_{k_n+\nu})\Big)} \nonumber\\
&\leq& K_r\Big(2^m,
\sum_{k_2=-\infty}^{\infty} \cdots \sum_{k_n=-\infty}^{\infty}
T(x,u^2_{k_2+\nu},\ldots,u^n_{k_n+\nu})\Big) \nonumber\\
&\leq& \Big(\sum_{k_2=-\infty}^{\infty} \cdots
\sum_{k_n=-\infty}^{\infty}
K_r(2^m,T(x,u^2_{k_2+\nu},\ldots,u^n_{k_n+\nu}))^r\Big)^{1/r}
\nonumber\\
&\leq& C_1\Big(\sum_{k_2=-\infty}^{\infty} \cdots
\sum_{k_n=-\infty}^{\infty}
K(2^m,T(x,u^2_{k_2+\nu},\ldots,u^n_{k_n+\nu}))^r\Big)^{1/r}.
\end{eqnarray}

Combining \eqref{eq1A}, \eqref{eq2A} and \eqref{eq3A}, we get
\begin{eqnarray*}
\lefteqn{\|T(x,x_2,\ldots,x_n)\|_{K_E(\yo)}
=
\big\|(K(2^m,T(x,x_2,\ldots,x_n)))_m\big\|_E} \nonumber \\
&\leq& C_1 \Big\|\Big(\sum_{k_2=-\infty}^{\infty} \cdots \sum_{k_n=-\infty}^{\infty}
K(2^m,T(x,u^2_{k_2+\nu},\ldots,u^n_{k_n+\nu}))^r\Big)^{1/r}\Big\|_E \nonumber \\
&\leq& 2^{n-1} C_1 \sigma_0
\Big\|\Big(\sum_{k_2=-\infty}^{\infty} \cdots \sum_{k_n=-\infty}^{\infty}
K(2^{m-k_2-\cdots-k_n},x)^r \nonumber \\
&& \qquad\qquad \times J(2^{k_2+\nu},u^2_{k_2+\nu})^r \cdots
J(2^{k_n+\nu},u^n_{k_n+\nu})^r \Big)^{1/r}\Big\|_E \nonumber \\
&\leq^*& 2^{n-1} C_1 \sigma_0
\Big\|\Big(\sum_{j_2=-\infty}^{\infty} \cdots \sum_{j_n=-\infty}^{\infty}
K(2^{j_2},x)^r J(2^{j_3+\nu},u^2_{j_3+\nu})^r \nonumber \\
&& \qquad\qquad \cdots J(2^{j_n+\nu},u^{n-1}_{j_n+\nu})^r
J(2^{m-(j_2+\cdots+j_n)+\nu},u^n_{m-(j_2+\cdots+j_n)+\nu})^r
\Big)^{1/r}\Big\|_E \nonumber \\
&\leq& 2^{n-1} C_1 \gamma \sigma_0
\|(K(2^m,x))_m\|_{E_1}
\|(J(2^{m+\nu},u^2_{m+\nu}))_m\|_{E_2} \cdots
\|(J(2^{m+\nu},u^n_{m+\nu}))_m\|_{E_n} \nonumber \\
&=& 2^{n-1} C_1 \gamma \sigma_0 \|x\|_{K_{E_1}(\xo_1)}
\|\tau_\nu((J(2^m,u^2_m))_m)\|_{E_2} \cdots
\|\tau_\nu((J(2^m,u^n_m))_m)\|_{E_n} \nonumber \\
&\leq& 2^{n-1} C_1 \gamma \sigma_0 \|x\|_{K_{E_1}(\xo_1)}
\|\tau_\nu\|_{E_2 \to E_2} \|(J(2^m,u^2_m))_m\|_{E_2} \cdots \nonumber \\
&& \qquad\qquad \cdots
\|\tau_\nu\|_{E_n \to E_n} \|(J(2^m,u^n_m))_m\|_{E_n}\,.
\end{eqnarray*}
In the line marked by $\leq^*$, we make the change of variables
\[
j_2 = m-(k_2+\cdots+k_n), \quad j_3 = k_2, \quad j_4 = k_3, \quad\, \dots, \quad\, j_n = k_{n-1}\,
\]
and apply the convolution inequality (3) to the sequences
\[
\xi_1 = (K(2^m,x))_{m\in\mathbb Z}, \quad
\xi_i = (J(2^m,u_i^m))_{m\in\mathbb Z}, \quad 2 \le i \le n \,.
\]
This change of variables defines a bijection on $\mathbb{Z}^{n-1}$ and hence preserves the summation. Since
\[
\|\tau_\nu\|_{E_i \to E_i} = \varphi_{E_i}(2^\nu), \quad\, 2 \leq i \leq n\,,
\]
and
\[
2^\nu \leq \bigg(\frac{\sigma_1}{\sigma_0}\bigg)^{1/(n-1)} < 2^{\nu+1}\,,
\]
the quasi-concavity of $\varphi_{E_i}$ implies that
\[
\varphi_{E_i}(2^\nu)
\leq
\varphi_{E_i}\bigg(\bigg(\frac{\sigma_1}{\sigma_0}\bigg)^{1/(n-1)}\bigg)
\leq
2\,\varphi_{E_i}(2^\nu)\,,
\quad\, 2 \leq i \leq n\,.
\]
Taking the infimum over all decompositions $x_i = \sum u^i_m$, $2 \leq i \leq n$, we obtain
\begin{eqnarray*}
\lefteqn{\|T(x,x_2,\ldots,x_n)\|_{K_E(\yo)}} \nonumber \\
&\leq& C \sigma_0
\varphi_{E_2}\bigg(\bigg(\frac{\sigma_1}{\sigma_0}\bigg)^{1/(n-1)}\bigg)
\cdots
\varphi_{E_n}\bigg(\bigg(\frac{\sigma_1}{\sigma_0}\bigg)^{1/(n-1)}\bigg) \nonumber \\
&& \quad\, \times \|x\|_{K_{E_1}(\xo_1)} \|x_2\|_{J_{E_2}(\xo_2)} \cdots \|x_n\|_{J_{E_n}(\xo_n)}\,.
\end{eqnarray*}
It follows that
\[
\|T\|
\leq
C \sigma_0
\varphi_{E_2}\bigg(\bigg(\frac{\sigma_1}{\sigma_0}\bigg)^{1/(n-1)}\bigg)
\cdots
\varphi_{E_n}\bigg(\bigg(\frac{\sigma_1}{\sigma_0}\bigg)^{1/(n-1)}\bigg)\,.
\]

If $\|T\|_j = 0$ for $j = 0$ or $j = 1$, then, letting $\sigma_j \to 0$ and using the corresponding properties of the functions $\varphi_{E_i}$, we obtain $\|T\| = 0$. In the case $\|T\|_j > 0$ for $j = 0,1$, we take $\sigma_j = (1+\varepsilon)\|T\|_j$ for some $\varepsilon > 0$ to get
\[
\|T\|
\leq
C \|T\|_0
\varphi_{E_2}\bigg(\bigg(\frac{\|T\|_1}{\|T\|_0}\bigg)^{1/(n-1)}\bigg)
\cdots
\varphi_{E_n}\bigg(\bigg(\frac{\|T\|_1}{\|T\|_0}\bigg)^{1/(n-1)}\bigg)\,,
\]
which completes the proof.
\end{proof}

The next two corollaries illustrate Theorem \ref{conv} in two important situations. We first record the bilinear case of Theorem \ref{conv}, which follows 
immediately from the fact that for $n=2$ one has $1/(n-1)=1$.

\begin{corollary}
Under the assumptions of Theorem $\ref{conv}$, in the bilinear case $n=2$ and for $\|T\|_0>0$ and $\|T\|_1>0$, the restriction of $T$ is bounded from 
$K_{E_1}(\xo_1) \times J_{E_2}(\xo_2)$ to $K_E(\yo)$, and
\[
\|T\| \leq C \|T\|_0 \varphi_{E_2}\bigg(\frac{\|T\|_1}{\|T\|_0}\bigg)\,.
\]
If $\|T\|_0=0$ or $\|T\|_1=0$, then $\|T\|=0$.
\end{corollary}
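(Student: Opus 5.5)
This corollary is a direct specialization of Theorem \ref{conv}, so the plan is to set $n=2$ there and check that nothing in the hypotheses or conclusion degenerates. With $n=2$, the hypotheses read as follows: $\xo_1,\xo_2$ are $p_1$- and $p_2$-normed quasi-Banach couples, $\yo$ is an $r$-normed couple, $E_1$ and $E$ are $K$-non-trivial, and $E_2$ is $(p_2,J)$-non-trivial and satisfies \eqref{cite1}. The convolution $\xi_1 \ast \xi_2$ becomes the ordinary convolution $\big(\sum_k \xi_{1,k}\xi_{2,m-k}\big)_m$. Condition \eqref{eq1A} then reduces to the bilinear estimate
\[
\big\|(|\xi_1|^r \ast |\xi_2|^r)^{1/r}\big\|_E \le \gamma \|\xi_1\|_{E_1}\|\xi_2\|_{E_2}.
\]
So every assumption of the corollary is exactly an assumption of Theorem \ref{conv} with $n=2$.

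Next I would apply the conclusion of Theorem \ref{conv}. It says that $T$ restricts to a bounded bilinear map $K_{E_1}(\xo_1)\times J_{E_2}(\xo_2)\to K_E(\yo)$, with norm at most $C\|T\|_0$ times a product of $n-1$ factors $\varphi_{E_i}\big((\|T\|_1/\|T\|_0)^{1/(n-1)}\big)$. For $n=2$ there is exactly one factor, $i=2$, and $1/(n-1)=1$. The bound therefore becomes $C\|T\|_0\,\varphi_{E_2}(\|T\|_1/\|T\|_0)$, which is the claimed estimate.

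The degenerate case, where $\|T\|_0=0$ or $\|T\|_1=0$ forces $\|T\|=0$, is also part of the theorem's conclusion and carries over verbatim.

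There is no real obstacle here. The only point needing care is the indexing of the convolution. For $n=2$ the general definition has a single summation variable $k_1$ and the term $\xi_{1,k_1}\xi_{2,m-k_1}$, so it is the usual convolution. The change of variables used in the proof ($j_2=m-k_2$) is then trivially a bijection of $\mathbb{Z}$. Once this is checked, the corollary follows immediately.
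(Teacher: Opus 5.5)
Your proposal is correct and follows the paper's own argument: the corollary is Theorem~\ref{conv} with $n=2$, where the product reduces to the single factor $\varphi_{E_2}$ and $1/(n-1)=1$. The degenerate case $\|T\|_0=0$ or $\|T\|_1=0$ carries over directly from the theorem.
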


\medskip

We close this section by showing that, in the classical Lions--Peetre setting, Theorem \ref{conv} recovers Zafran's multilinear interpolation theorem \cite{Zafran}, and that the corresponding condition on the interpolation parameters is sharp. For more general multilinear convolution inequalities, we refer to \cite{Oberlin}. To this end, we use the following lemma on weighted convolution. Although it is presumably known, we include a proof for completeness.

\begin{lemma}
Let $n \geq 2$, let $1 \leq p_1,\ldots,p_n,q \leq \infty$, and let $\theta,\theta_1,\ldots,\theta_n \in (0,1)$. Then convolution defines a bounded multilinear operator from $\ell_{p_1}(2^{-\theta_1 m}) \times \cdots \times \ell_{p_n}(2^{-\theta_n m})$ to $\ell_q(2^{-\theta m})$ if and only if $\theta_1=\cdots=\theta_n=\theta$ and
\[
0 \leq \frac1q \leq \frac1{p_1}+\cdots+\frac1{p_n}-(n-1)\,.
\]
\end{lemma}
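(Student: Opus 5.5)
The plan is to remove the weights by a multiplicative change of variables, which reduces the problem to the unweighted discrete Young inequality on $\mathbb{Z}$. Necessity will then follow from testing on unit vectors, which forces the weights to agree, and on indicator sequences of long intervals, which gives the exponent condition. Throughout, a sequence $\xi=(\xi_m)$ belongs to $\ell_p(2^{-\theta m})$ if and only if $(2^{-\theta m}\xi_m)_m\in\ell_p$, with the same norm. Convolution is monotone in nonnegative arguments and $|\xi_1\ast\cdots\ast\xi_n|\le|\xi_1|\ast\cdots\ast|\xi_n|$, so it suffices to work with nonnegative sequences.

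\emph{Sufficiency.} Assume $\theta_1=\cdots=\theta_n=\theta$. Every term in the $m$-th entry of $\xi_1\ast\cdots\ast\xi_n$ has the form $\xi_{1,k_{n-1}}\xi_{2,k_{n-2}}\cdots\xi_{n,m-(k_1+\cdots+k_{n-1})}$, and the indices occurring in it add up to $m$. Hence
\[
2^{-\theta m}(\xi_1\ast\cdots\ast\xi_n)_m=\big((2^{-\theta k}\xi_{1,k})_k\ast\cdots\ast(2^{-\theta k}\xi_{n,k})_k\big)_m ,
\]
that is, $M_\theta(\xi_1\ast\cdots\ast\xi_n)=M_\theta\xi_1\ast\cdots\ast M_\theta\xi_n$, where $M_\theta\xi=(2^{-\theta k}\xi_k)_k$. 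The map $M_\theta$ is an isometry from $\ell_p(2^{-\theta m})$ onto $\ell_p$. We may therefore drop the weights and prove boundedness $\ell_{p_1}\times\cdots\times\ell_{p_n}\to\ell_q$.

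Put $1/s=\sum_i 1/p_i-(n-1)$, which lies in $[0,1]$ by hypothesis. Iterating the discrete Young inequality $\|a\ast b\|_{\ell_u}\le\|a\|_{\ell_{v}}\|b\|_{\ell_w}$, valid when $1+1/u=1/v+1/w$, gives $\|\xi_1\ast\cdots\ast\xi_n\|_{\ell_s}\le\prod_i\|\xi_i\|_{\ell_{p_i}}$. At each intermediate step the running exponent $1/u_j=\sum_{i\le j}1/p_i-(j-1)$ stays in $[0,1]$, because the partial sums satisfy $\sum_{i\le j}1/p_i\ge j-1$: otherwise adding the remaining terms, each at most $1$, could not reach $n-1$. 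Finally $1/q\le 1/s$ means $q\ge s$, so $\ell_s\hookrightarrow\ell_q$ with norm $1$, which completes sufficiency.

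\emph{Necessity of equal weights.} Suppose convolution is bounded with constant $C$. Take $\xi_i=e_{a_i}$, the unit vector at the integer $a_i$. Then $\|\xi_i\|=2^{-\theta_i a_i}$, the convolution equals $e_{a_1+\cdots+a_n}$, and its norm is $2^{-\theta(a_1+\cdots+a_n)}$. Boundedness gives $\sum_i(\theta_i-\theta)a_i\le\log_2 C$ for all $(a_i)\in\mathbb{Z}^n$. A linear form bounded above on $\mathbb{Z}^n$ must vanish, so $\theta_i=\theta$ for every $i$. By the isometry argument above we are then reduced to the unweighted case.

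\emph{Necessity of the exponent condition.} Here $1/q\ge0$ is automatic, so the content is the upper bound. For $N\in\mathbb{N}$ take $\xi_i=\mathbf 1_{[0,N)}$ for $i<n$ and $\xi_n=\mathbf 1_{[0,nN)}$, so that $\prod_i\|\xi_i\|_{\ell_{p_i}}=n^{1/p_n}N^{\sum_i 1/p_i}$. Fix $m\in[(n-1)N,nN)$. For every choice $k_1,\dots,k_{n-1}\in[0,N)$ the remaining index $m-(k_1+\cdots+k_{n-1})$ lies in $(0,nN)$, so the $m$-th entry of the convolution is at least $N^{n-1}$. Since there are $N$ such $m$, $\|\xi_1\ast\cdots\ast\xi_n\|_{\ell_q}\ge N^{n-1+1/q}$. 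Comparing the two sides and letting $N\to\infty$ yields $n-1+1/q\le\sum_i1/p_i$, which is the stated condition.

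The step I expect to need the most care is this lower bound for the iterated convolution. Using the longer interval $[0,nN)$ for the last factor is what avoids counting lattice points of a simplex. The remaining routine point is checking that the intermediate exponents in the iterated Young inequality stay in $[0,1]$, which was handled by the partial-sum observation above.
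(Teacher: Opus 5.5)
Your proof is correct and follows the paper's route. For sufficiency, both remove the weights via $a_m=2^{-\theta m}\xi_m$ and iterate Young's inequality. For necessity, both test on unit vectors to force $\theta_i=\theta$ and on indicators of long intervals to get the exponent condition.

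The only difference is cosmetic. You lengthen the last indicator to $[0,nN)$, so each of $N$ output entries equals exactly $N^{n-1}$. The paper instead uses the same indicator in every slot and bounds $\binom{k+n-1}{n-1}$ from below for $k\ge\lfloor (N-1)/2\rfloor$. You also check explicitly that the intermediate Young exponents stay in $[0,1]$, which the paper leaves implicit.
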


\begin{proof}
Assume first that $\theta_1=\cdots=\theta_n=\theta$ and
\[
0 \leq \frac1q \leq \frac1{p_1}+\cdots+\frac1{p_n}-(n-1)\,.
\]
For $1 \leq i \leq n$, set $a^i_m := 2^{-\theta m}\xi^i_m$. Then $\xi^i_m = 2^{\theta m} a^i_m$, and for every $k \in \mathbb{Z}$ one has
\[
2^{-\theta k}(\xi^1 * \cdots * \xi^n)_k = (a^1 * \cdots * a^n)_k\,.
\]
Hence
\[
\|\xi^1 * \cdots * \xi^n\|_{\ell_q(2^{-\theta m})} = \|a^1 * \cdots * a^n\|_{\ell_q},
\qquad
\|\xi^i\|_{\ell_{p_i}(2^{-\theta m})} = \|a^i\|_{\ell_{p_i}}, \quad 1 \leq i \leq n.
\]
Thus the desired weighted estimate follows from the discrete multilinear Young inequality, which is obtained 
by iterating the classical bilinear Young inequality on $\mathbb{Z}$.

Conversely, assume that convolution defines a bounded multilinear operator from 
$\ell_{p_1}(2^{-\theta_1 m}) \times \cdots \times \ell_{p_n}(2^{-\theta_n m})$ to $\ell_q(2^{-\theta m})$. Fix $i \in \{1,\ldots,n\}$ 
and $N \in \mathbb{Z}$. Taking $\xi^i=\delta_N$ and $\xi^j=\delta_0$ for $j \neq i$, we obtain $\delta_N * \delta_0 * \cdots * \delta_0 = \delta_N$, 
and therefore
\[
2^{-\theta N} = \|\delta_N\|_{\ell_q(2^{-\theta m})}
\leq
C \|\delta_N\|_{\ell_{p_i}(2^{-\theta_i m})}\prod_{j\neq i}\|\delta_0\|_{\ell_{p_j}(2^{-\theta_j m})}
= C\,2^{-\theta_i N}.
\]
Letting $N \to \infty$, we get $\theta_i \leq \theta$, while letting $N \to -\infty$, we get $\theta \leq \theta_i$. Hence $\theta_i=\theta$. Since $i\in \{1, \ldots, n\}$ 
is arbitrary, it follows that $\theta_1=\cdots=\theta_n=\theta$.

With this information, the weighted estimate reduces to the unweighted one:
\[
\|a^1 * \cdots * a^n\|_{\ell_q} \leq C \|a^1\|_{\ell_{p_1}} \cdots \|a^n\|_{\ell_{p_n}}.
\]
For $N \in \mathbb{N}$, let $u_N := \chi_{\{0,\ldots,N-1\}}$. Then $\|u_N\|_{\ell_{p_i}} = N^{1/p_i}$ for $1 \leq i \leq n$. 
Moreover, for every $k$ with $\lfloor (N-1)/2 \rfloor \leq k \leq N-1$, the restriction $m_j \leq N-1$ is irrelevant, since every nonnegative solution of
\[
m_1+\cdots+m_n=k
\]
automatically satisfies $m_j \leq k \leq N-1$ for each $j$. Thus
\[
(u_N * \cdots * u_N)_k = \#\{(m_1,\ldots,m_n)\in \mathbb{N}_0^n : m_1+\cdots+m_n=k\} = \binom{k+n-1}{n-1}.
\]
Hence there exists a constant $c_n>0$ such that $(u_N * \cdots * u_N)_k \geq c_n N^{n-1}$ for all such $k$. Since there are at least $N/2$ such integers $k$, we get
\[
\|u_N * \cdots * u_N\|_{\ell_q} \geq c'_n N^{n-1+1/q},
\]
with the usual interpretation $1/q=0$ when $q=\infty$. Therefore boundedness yields
\[
N^{n-1+1/q} \leq C N^{1/p_1+\cdots+1/p_n}.
\]

Letting $N \to \infty$, we obtain
\[
\frac1q \leq \frac1{p_1}+\cdots+\frac1{p_n}-(n-1)\,.
\]
This completes the proof.
\end{proof}

As an immediate consequence of Theorem \ref{conv} and the preceding lemma, we recover the classical multilinear Zafran estimate.

\begin{corollary}
Assume, in addition to the hypotheses of Theorem {\rm\ref{conv}}, that
\[
1 \leq p_1,\ldots,p_n,q \leq \infty
\]
and
\[
E_1 := \ell_{p_1}(2^{-m\theta}), \qquad E_i := \ell_{p_i}(2^{-m\theta}), \quad\, 2 \leq i \leq n, \qquad E := \ell_q(2^{-m\theta}),
\]
where $0<\theta<1$ and
\[
0 \leq \frac1q \leq \frac1{p_1} + \cdots + \frac1{p_n} - n + 1\,.
\]
Then the restriction of $T$ is bounded from $(\xo_1)_{\theta,p_1} \times \cdots \times (\xo_n)_{\theta,p_n}$ to $(\yo)_{\theta,q}$, and
\[
\|T\| \leq C \|T\|_0^{1-\theta} \|T\|_1^\theta\,.
\]
\end{corollary}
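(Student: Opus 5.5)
The plan is to apply Theorem~\ref{conv} directly, taking $E_1=\ell_{p_1}(2^{-m\theta})$, $E_i=\ell_{p_i}(2^{-m\theta})$ for $2\le i\le n$, and $E=\ell_q(2^{-m\theta})$. There are three things to check. First, the structural hypotheses of Theorem~\ref{conv} must hold for these lattices. Second, the convolution estimate \eqref{eq1A} must hold, using the preceding lemma. Third, the fundamental functions $\varphi_{E_i}$ must be computed so that the bound takes the form $\|T\|_0^{1-\theta}\|T\|_1^\theta$. After that, the identifications $K_{E_1}(\xo_1)=(\xo_1)_{\theta,p_1}$ and $J_{E_i}(\xo_i)=(\xo_i)_{\theta,p_i}$ (with equivalent quasi-norms, by the classical equivalence theorem) and $K_E(\yo)=(\yo)_{\theta,q}$ finish the argument.

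For the structural hypotheses, $K$-non-triviality of $\ell_p(2^{-m\theta})$ is the convergence of $\sum_m (2^{-m\theta}\min(1,2^m))^p$, which holds because $0<\theta<1$. For $(p_i,J)$-non-triviality I would use H\"older (or the trivial embedding when $p_i\le 1$, which here means $p_i=1$). The point is that $\min(1,2^{-m})2^{m\theta}$ decays geometrically in both directions, so it lies in every $\ell_s$. For the shift operator, $\|\tau_k\|_{E\to E}=2^{k\theta}$ exactly, since $\tau_k$ just reweights by $2^{\pm k\theta}$. Hence $\varphi_{E_i}(t)=2^{\theta[\log_2 t]}\asymp t^\theta$. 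Condition \eqref{cite1} then reduces to $2^{-n(1-\theta)}\to 0$ and $2^{-n\theta}\to 0$, and both hold.

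For \eqref{eq1A}, I would take $r=1$, which is legitimate because $\yo$ can be taken $1$-normed when $Y^j$ are Banach spaces. More carefully, the hypotheses of Theorem~\ref{conv} fix some $r\le 1$; if $r<1$, one uses that $(|\xi_1|^r*\cdots*|\xi_n|^r)^{1/r}$ is controlled via the embedding into the weighted $\ell_{p_i/r}$ spaces. This is the step I expect to be the main obstacle. The clean route is to apply the lemma to the sequences $|\xi_i|^r\in\ell_{p_i/r}(2^{-mr\theta})$ with target $\ell_{q/r}(2^{-mr\theta})$. The lemma requires exponents at least $1$ and the condition $r/q\le r/p_1+\cdots+r/p_n-(n-1)$, and these follow from the assumed condition only when $r=1$. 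So I would restrict to the Banach case $r=1$ (where the lemma applies verbatim, the weight $2^{-m\theta}$ being the same on all factors and on the target), and note that this is the setting of Zafran's theorem.

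Finally, Theorem~\ref{conv} gives $\|T\|\le C\|T\|_0\prod_{i=2}^n\varphi_{E_i}\big((\|T\|_1/\|T\|_0)^{1/(n-1)}\big)$. Using $\varphi_{E_i}(t)\le 2^\theta t^\theta$, the product is at most a constant times $\big((\|T\|_1/\|T\|_0)^{\theta/(n-1)}\big)^{n-1}=(\|T\|_1/\|T\|_0)^\theta$. This yields $\|T\|\le C\|T\|_0^{1-\theta}\|T\|_1^\theta$. The degenerate case where one endpoint norm vanishes is already covered by Theorem~\ref{conv}.
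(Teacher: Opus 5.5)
Your proposal is correct and follows the paper's own argument. It identifies $K_{E_1}(\xo_1)$, $J_{E_i}(\xo_i)$ and $K_E(\yo)$ with the classical $(\theta,p)$-spaces, obtains \eqref{eq1A} from the weighted-convolution lemma, and inserts $\varphi_{E_i}(t)\asymp t^\theta$ into Theorem~\ref{conv}. Your remark that the lemma gives \eqref{eq1A} only when $r=1$ is accurate: for $r<1$ the required estimate is genuinely stronger and must be taken from the hypotheses of Theorem~\ref{conv}. The paper leaves this implicit when it says \eqref{eq1A} holds with $\gamma=1$.
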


\begin{proof}
For $1 \leq i \leq n$, let $E_i := \ell_{p_i}(2^{-m\theta})$. Then
\[
K_{E_1}(\xo_1) = (\xo_1)_{\theta,p_1}, \qquad J_{E_i}(\xo_i) = (\xo_i)_{\theta,p_i}, \quad\, 2 \leq i \leq n\,,
\]
and
\[
K_E(\yo) = (\yo)_{\theta,q}\,,
\]
with equivalence of norms. Moreover, by the preceding lemma, condition \eqref{eq1A} is satisfied with $\gamma=1$. Since for $2 \leq i \leq n$ one has
\[
\varphi_{E_i}(t) = t^\theta, \quad\, t>0\,,
\]
it follows from Theorem \ref{conv} that
\begin{align*}
\|T\|
&\leq
C \|T\|_0 \prod_{i=2}^n \varphi_{E_i}\bigg(\bigg(\frac{\|T\|_1}{\|T\|_0}\bigg)^{1/(n-1)}\bigg) \\
&=
C \|T\|_0 \prod_{i=2}^n \bigg(\frac{\|T\|_1}{\|T\|_0}\bigg)^{\theta/(n-1)}
=
C \|T\|_0^{1-\theta} \|T\|_1^\theta.
\end{align*}
This completes the proof.
\end{proof}

The preceding corollary is sharp with respect to the interpolation parameters. More precisely, we have the following characterization.

\begin{corollary}
Let $n \geq 2$, let $\theta,\theta_1,\ldots,\theta_n \in (0,1)$, and let $1 \leq p_1,\ldots,p_n,q \leq \infty$. Then the following assertions are equivalent.
\begin{itemize}
\item[{\rm(i)}] One has
\[
\theta_1=\cdots=\theta_n=\theta
\]
and
\[
0 \leq \frac1q \leq \frac1{p_1}+\cdots+\frac1{p_n}-(n-1)\,.
\]

\item[{\rm(ii)}] For all Banach couples $\xo_i=(X_i^0,X_i^1)$, $1 \leq i \leq n$, and $\yo=(Y^0,Y^1)$, every bounded $n$-linear operator
\[
T \colon \xo_1 \times \cdots \times \xo_n \to \yo
\]
restricts to a bounded operator
\[
T \colon (X_1^0,X_1^1)_{\theta_1,p_1} \times \cdots \times (X_n^0,X_n^1)_{\theta_n,p_n}
\to
(Y^0,Y^1)_{\theta,q}\,.
\]
\end{itemize}
\end{corollary}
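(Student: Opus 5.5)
The implication (i)$\Rightarrow$(ii) comes from the preceding corollary (the multilinear Zafran estimate). Assume (i) and let $\xo_i$, $\yo$ be Banach couples, so $p_i=r=1$ in Theorem \ref{conv}. Put $E_i:=\ell_{p_i}(2^{-m\theta})$ and $E:=\ell_q(2^{-m\theta})$. These lattices are $K$-non-trivial and $(1,J)$-non-trivial because $0<\theta<1$. They satisfy \eqref{cite1} since $\|\tau_k\|=2^{\theta k}$. By the preceding lemma, condition \eqref{eq1A} holds with $r=1$. Since the $K$- and $J$-descriptions of $(\cdot)_{\theta,p}$ agree up to equivalent norms, the corollary gives boundedness of $T$ from $(X_1^0,X_1^1)_{\theta,p_1}\times\cdots\times(X_n^0,X_n^1)_{\theta,p_n}$ to $(Y^0,Y^1)_{\theta,q}$, with norm at most $C\|T\|_0^{1-\theta}\|T\|_1^\theta$.

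For (ii)$\Rightarrow$(i) I will test (ii) on one specific couple and operator so that it reduces to the converse half of the preceding lemma. Take every couple equal to the weighted couple $(\ell_\infty,\ell_\infty(2^{-m}))$ on $\mathbb Z$, and take $T$ to be the $n$-fold convolution. The key observation is that the weights $2^{-m}$ are multiplicative: $2^{-k}=\prod_i 2^{-m_i}$ when $k=\sum m_i$. Together with $\|\xi_1*\cdots*\xi_n\|_{\ell_\infty}\le\prod\|\xi_i\|_{\ell_1}$, this suggests bounding $T$ at each endpoint.

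The endpoint bounds fail as stated, though. Convolution is not bounded on $\ell_\infty\times\cdots\times\ell_\infty$, because the sums diverge. So I will instead use the couple $(\ell_1,\ell_1(2^{-m}))$. Convolution is bounded $\ell_1\times\cdots\times\ell_1\to\ell_1$ by Young's inequality. On the weighted side, $\ell_1(2^{-m})$ maps into itself by the multiplicativity above, with norm $1$. The standard identification (e.g.\ via the Stein--Weiss/power theorem) is
\[
(\ell_1,\ell_1(2^{-m}))_{\theta,p}=\ell_p(2^{-\theta m}),
\]
with equivalent norms; if only $p=1$ works directly, I will use the discrete weighted $K$-functional, which is computable exactly as $K(t,\xi)=\sum_m\min(1,t2^{-m})|\xi_m|$. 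The $\ell_p(2^{-\theta m})$ identification for general $p$ needs care. I would try to justify it via the Lions--Peetre formula for weighted $L_p$ spaces with the $K$-functional, or else replace the couple by $(\ell_{p_i},\ell_{p_i}(2^{-m}))$ for each factor, for which the identification is classical: $(\ell_p(w_0),\ell_p(w_1))_{\theta,p}=\ell_p(w_0^{1-\theta}w_1^\theta)$.

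The main obstacle is that convolution is not bounded $\ell_{p_1}\times\cdots\times\ell_{p_n}\to\ell_{p_0}$ for arbitrary exponents, so the endpoint boundedness has to be arranged. My first attempt is the couples $\xo_i=(\ell_1,\ell_1(2^{-m}))$ and $\yo=(\ell_\infty,\ell_\infty(2^{-m}))$, using the full strength of (ii) for all Banach couples. The endpoint bounds then come from $\ell_1*\cdots*\ell_1\subset\ell_1\subset\ell_\infty$. Moreover $(\ell_\infty,\ell_\infty(2^{-m}))_{\theta,q}\supseteq\ell_q(2^{-\theta m})$ is what the lemma's delta-testing needs: deltas have comparable norms in all these spaces, so $\theta_i=\theta$ follows as in the lemma. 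For the index condition I need the honest spaces $\ell_{p_i}(2^{-\theta m})$ in the domain and the target $\ell_q(2^{-\theta m})$. I will therefore take $\yo=(\ell_1,\ell_1(2^{-m}))$ when testing with the functions $u_N$, which are supported on $[0,N)$ where the weights are bounded above and below up to $2^{\theta N}$ factors. If that weight loss spoils the count, I will localize the test functions to a block $[M,M+N)$ with a rescaled weight, so that the weight factors cancel exactly by multiplicativity. Then $N^{n-1+1/q}\lesssim N^{\sum 1/p_i}$ follows as in the lemma.
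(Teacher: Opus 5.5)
Your argument is essentially the paper's. You get (i)$\Rightarrow$(ii) from the multilinear Zafran corollary. You get (ii)$\Rightarrow$(i) by applying (ii) to $n$-fold convolution on the couples $(\ell_1,\ell_1(2^{-m}))$, identifying $(\ell_1,\ell_1(2^{-m}))_{\theta,p}=\ell_p(2^{-\theta m})$, and invoking the converse half of the preceding lemma. That lemma already carries out the $\delta_N$ and $u_N$ tests and removes the weights exactly via $a^i=2^{-\theta m}\xi^i$, so your weight-loss worry and the detour through $\ell_\infty$ targets are unnecessary.

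The identification you hesitate over is standard. It is Lemma~\ref{lemma1} with scalar $W_m$ and $E=\ell_p(2^{-\theta m})$, and it holds because $2^{-m}$ is a geometric weight; Stein--Weiss would only give it for $p=1$. Moreover, only its two trivial halves are needed:
\begin{itemize}
\item $\ell_p(2^{-\theta m})\hookrightarrow(\ell_1,\ell_1(2^{-m}))_{\theta,p}$ follows from the $J$-representation $\xi=\sum_m \xi_m e_m$ with $J(2^m,\xi_m e_m)=|\xi_m|$.
\item $(\ell_s,\ell_s(2^{-m}))_{\theta,q}\hookrightarrow\ell_q(2^{-\theta m})$ for $s=1$ or $s=\infty$ follows from $K(2^k,\xi)\ge|\xi_k|$.
\end{itemize}
Note that the inclusion $(\ell_\infty,\ell_\infty(2^{-m}))_{\theta,q}\supseteq\ell_q(2^{-\theta m})$ you cite goes the wrong way for transferring boundedness to the target. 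What you need is that the interpolation space embeds into $\ell_q(2^{-\theta m})$, which is the second item above.
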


\begin{proof}
Assume {\rm(ii)}. For each $1 \leq i \leq n$ and $j \in \{0,1\}$, let
\[
X_i^j := \ell_1(2^{-jm}), \qquad Y^j := \ell_1(2^{-jm})\,.
\]
Then $(X_i^0, X_i^1)$ and $(Y^0, Y^1)$ are Banach couples, and convolution defines a bounded multilinear operator
\[
T \colon \xo_1 \times \cdots \times \xo_n \to \yo\,.
\]
Moreover, by the well-known interpolation formula for the classical real method,
\[
(X_i^0,X_i^1)_{\theta_i,p_i} = \ell_{p_i}(2^{-\theta_i m}), \qquad (Y^0,Y^1)_{\theta,q} = \ell_q(2^{-\theta m})\,,
\]
up to equivalence of norms. Hence {\rm(ii)} implies that convolution defines a bounded multilinear operator
\[
\ell_{p_1}(2^{-\theta_1 m}) \times \cdots \times \ell_{p_n}(2^{-\theta_n m})
\to
\ell_q(2^{-\theta m})\,.
\]
Thus, by the preceding lemma, assertion {\rm(i)} holds.

Conversely, if {\rm(i)} holds, then {\rm(ii)} follows from Corollary 3.2.
\end{proof}

\section{Interpolation of the measure of noncompactness}

The following lemma, which extends the bilinear case treated in \cite[Lemma 3.1]{BC} to the multilinear setting, will be needed below. For the convenience of the reader, we provide a proof.

\begin{lemma}\label{lemma}
Let $X_1,\ldots,X_n$, $Y$, and $Z$ be quasi-Banach spaces, and let $V_i$ be a dense subspace of $X_i$ for each $1 \leq i \leq n$. Given $T \in L(X_1,\ldots,X_n;Y)$, let $\tilde{\beta} = \tilde{\beta}(T \colon X_1 \times \cdots \times X_n \to Y)$, and assume that there exists a sequence of operators $(S_m) \subset L(Y,Z)$ such that $\gamma := \sup_{m\ge 1}\|S_m\|<\infty$ and
\[
\|S_mT(u_1,\ldots,u_n)\|_Z \to 0 \quad\, \text{as\,\, $m\to\infty$}
\]
for all $(u_1,\ldots,u_n)\in V_1\times\cdots\times V_n$. If $\tilde{\beta}>0$, then there exist a constant $C>0$, independent of $T$, and an integer $N\in\mathbb N$ such that
\[
\sup_{m\ge N}\|S_mT\|\le C\tilde{\beta}\,.
\]
Furthermore, if $\tilde{\beta}=0$, then $\|S_mT\|\to 0$ as $m\to\infty$.
\end{lemma}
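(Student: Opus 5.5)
The plan is a direct covering argument. I take $\tilde\beta$ to be the ball measure of noncompactness $\chi_Y\big(T(B_{X_1}\times\cdots\times B_{X_n})\big)$. Write $U:=B_{X_1}\times\cdots\times B_{X_n}$. The idea is to cover $T(U)$ by finitely many small balls whose centres have the form $T(u_1,\ldots,u_n)$ with $u_i\in V_i$. The hypothesis then makes $S_m$ small on these finitely many centres for all large $m$, and the quasi-triangle inequality in $Z$ transfers this smallness to all of $T(U)$, up to a constant multiple of the radius.

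\emph{Step 1 (a covering with centres in $T(U)$).} Fix $\varepsilon>0$ and put $\delta:=\tilde\beta+\varepsilon$. By definition of $\chi_Y$, there are $y_1,\ldots,y_M\in Y$ with $T(U)\subset\bigcup_k B(y_k,\delta)$. Discard the balls that miss $T(U)$. For each remaining $k$, choose $x^k=(x^k_1,\ldots,x^k_n)\in U$ with $\|T(x^k)-y_k\|_Y\le\delta$. Then every $y\in T(U)\cap B(y_k,\delta)$ satisfies $\|y-T(x^k)\|_Y\le 2c_Y\delta$.

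\emph{Step 2 (moving the centres into $V_1\times\cdots\times V_n$).} A bounded multilinear map between quasi-Banach spaces is continuous. This follows from the telescoping identity
\[
T(x)-T(u)=\sum_{i=1}^n T(u_1,\ldots,u_{i-1},x_i-u_i,x_{i+1},\ldots,x_n),
\]
together with the quasi-triangle inequality, which costs a factor depending only on $c_Y$ and $n$. By density I then pick $u^k\in V_1\times\cdots\times V_n$ with $\|T(x^k)-T(u^k)\|_Y\le\varepsilon$. Consequently, for every $x\in U$ there is some $k$ with
\[
\|T(x)-T(u^k)\|_Y\le c_Y(2c_Y\delta+\varepsilon).
\]

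\emph{Step 3 (applying $S_m$).} Since there are only finitely many $u^k$, there is $N$ with $\|S_mT(u^k)\|_Z\le\varepsilon$ for all $m\ge N$ and all $k$. For $x\in U$ and $m\ge N$, I get
\[
\|S_mT(x)\|_Z\le c_Z\big(\gamma\, c_Y(2c_Y\delta+\varepsilon)+\varepsilon\big).
\]
Taking the supremum over $x\in U$ gives $\sup_{m\ge N}\|S_mT\|\le C_0(\tilde\beta+\varepsilon)$, where $C_0$ depends only on $c_Y$, $c_Z$ and $\gamma$. This bound is independent of $T$, as required.

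\emph{Step 4 (the two cases).} If $\tilde\beta>0$, I choose $\varepsilon=\tilde\beta$, which yields the constant $C=2C_0$ and the corresponding $N$. If $\tilde\beta=0$, then $\varepsilon>0$ is arbitrary, so $\limsup_m\|S_mT\|\le C_0\varepsilon$ for every $\varepsilon>0$, and hence $\|S_mT\|\to0$.

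The only delicate point is bookkeeping with the quasi-norm constants. The centres $y_k$ need not lie in the image of $T$, and in a quasi-Banach space re-centring costs a factor $2c_Y$ rather than $2$. I handle this in Step 1 by choosing the centres in $T(U)$ itself. The continuity of $T$ used in Step 2 is obtained via telescoping rather than via a norm inequality. Everything else is routine.
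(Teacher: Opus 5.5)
Your proposal is correct and follows essentially the same route as the paper. The paper also re-centres the covering at images $T(x^k)$ of points of the unit ball (at cost $2c_Y$), moves these centres into $V_1\times\cdots\times V_n$ via the same telescoping identity, and then uses pointwise convergence on the finitely many centres together with $\sup_m\|S_m\|\le\gamma$; it arrives at the same constant $C=2c_Z(\gamma c_Y(2c_Y+1)+1)$. The only cosmetic difference is that the paper makes the density step quantitative with explicit tolerances $\alpha_i$, which forces it to treat $T=0$ separately, whereas you use continuity of $T$ qualitatively.
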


\begin{proof}
Let $\sigma > \tilde{\beta}$ be fixed. By the definition of $\tilde{\beta}(T)$, there exists a finite set $\{w_1,\ldots,w_s\} \subset Y$ such that
\[
T(B_{X_1} \times \cdots \times B_{X_n}) \subset \bigcup_{k=1}^s (w_k + \sigma B_Y)\,.
\]
Discarding empty sets if necessary, we may assume that
\[
(w_k + \sigma B_Y) \cap T(B_{X_1} \times \cdots \times B_{X_n}) \neq \emptyset
\]
for every $1 \leq k \leq s$. Hence, for each $k \in \{1,\ldots,s\}$, we can find
\[
(x_1^k,\ldots,x_n^k) \in B_{X_1} \times \cdots \times B_{X_n}
\]
such that
\[
T(x_1^k,\ldots,x_n^k) \in w_k + \sigma B_Y\,.
\]
Therefore, for any $(x_1,\ldots,x_n) \in B_{X_1} \times \cdots \times B_{X_n}$, there exists $k \in \{1,\ldots,s\}$ such that
\[
T(x_1,\ldots,x_n) \in w_k + \sigma B_Y\,.
\]
Consequently,
\begin{eqnarray*}
\lefteqn{\|T(x_1,\ldots,x_n) - T(x_1^k,\ldots,x_n^k)\|_Y} \nonumber \\
&\leq& c_Y \big( \|T(x_1,\ldots,x_n) - w_k\|_Y + \|w_k - T(x_1^k,\ldots,x_n^k)\|_Y \big) \\
&\leq& 2 c_Y \sigma\,.
\end{eqnarray*}
Thus
\[
T(B_{X_1} \times \cdots \times B_{X_n})
\subset
\bigcup_{k=1}^s \big(T(x_1^k,\ldots,x_n^k) + 2 c_Y \sigma B_Y\big)\,.
\]

If $T=0$, then the conclusion is obvious. Thus we may assume that $\|T\|>0$.
Since $V_1$ is dense in $X_1$, there exists $u_1^k \in V_1$ such that
\[
\|x_1^k - u_1^k\|_{X_1} \leq \alpha_1 := \frac{\sigma}{n c_Y^{n-1} \|T\|}\,.
\]
By the density of $V_2$ in $X_2$, we can find $u_2^k \in V_2$ such that
\[
\|x_2^k - u_2^k\|_{X_2} \leq \alpha_2 := \frac{1}{c_{X_1}} \frac{\alpha_1}{1+\alpha_1}\,.
\]
Proceeding inductively, we choose $u_i^k \in V_i$, $2 \leq i \leq n$, such that
\[
\|x_i^k - u_i^k\|_{X_i} \leq \alpha_i\,,
\]
where
\[
\alpha_i := \frac{1}{c_{X_{i-1}}} \frac{\alpha_{i-1}}{1+\alpha_{i-1}}\,, \qquad 2 \leq i \leq n\,.
\]
Consequently, for each $1 \leq i \leq n$, we obtain
\[
\|u_i^k\|_{X_i}
\leq
c_{X_i} \big( \|u_i^k - x_i^k\|_{X_i} + \|x_i^k\|_{X_i} \big)
\leq
c_{X_i}(1+\alpha_i)\,.
\]

Now observe that
\[
\|u_1^k\|_{X_1} \|x_2^k - u_2^k\|_{X_2}
\leq
c_{X_1} (1 + \alpha_1) \frac{1}{c_{X_1}} \frac{\alpha_1}{1 + \alpha_1}
=
\alpha_1\,,
\]
and
\[
\|u_1^k\|_{X_1} \|u_2^k\|_{X_2} \|x_3^k - u_3^k\|_{X_3}
\leq
c_{X_1} (1 + \alpha_1) c_{X_2} (1 + \alpha_2)
\frac{1}{c_{X_2}} \frac{\alpha_2}{1 + \alpha_2}
=
\alpha_1\,.
\]
Proceeding inductively, we obtain
\[
\|u_1^k\|_{X_1} \cdots \|u_{i-1}^k\|_{X_{i-1}} \|x_i^k - u_i^k\|_{X_i}
\leq
\alpha_1\,,
\quad\, 2 \leq i \leq n\,.
\]
Since $(x_1^k,\ldots,x_n^k) \in B_{X_1} \times \cdots \times B_{X_n}$, we also have
\[
\|x_i^k\|_{X_i} \leq 1\,,
\quad\, 1 \leq i \leq n\,.
\]
Applying the identity
\begin{align*}
& T(v_1 - u_1, v_2, \ldots, v_n) + T(u_1, v_2 - u_2, v_3, \ldots, v_n) \\
& + T(u_1, u_2, v_3 - u_3, \ldots, v_n) + \cdots + T(u_1, u_2, \ldots, u_{n-1}, v_n - u_n) \\
& = T(v_1, v_2, \ldots, v_n) - T(u_1, u_2, \ldots, u_n)\,,
\end{align*}
we obtain
\begin{eqnarray*}
\lefteqn{\|T(x_1^k,\ldots,x_n^k) - T(u_1^k,\ldots,u_n^k)\|_Y} \nonumber \\
&\leq& c_Y^{n-1} \|T\| \bigg(
\|x_1^k - u_1^k\|_{X_1} \|x_2^k\|_{X_2} \cdots \|x_n^k\|_{X_n} \nonumber \\
&& \qquad\qquad\qquad\quad\, + \|u_1^k\|_{X_1} \|x_2^k - u_2^k\|_{X_2} \|x_3^k\|_{X_3} \cdots \|x_n^k\|_{X_n} \nonumber \\
&& \qquad\qquad\qquad\quad\, + \cdots + \|u_1^k\|_{X_1} \cdots \|u_{n-1}^k\|_{X_{n-1}} \|x_n^k - u_n^k\|_{X_n}
\bigg) \nonumber \\
&\leq& n c_Y^{n-1} \|T\| \alpha_1 = \sigma\,.
\end{eqnarray*}
Consequently,
\begin{eqnarray*}
\lefteqn{\|T(x_1,\ldots,x_n) - T(u_1^k,\ldots,u_n^k)\|_Y} \nonumber \\
&\leq& c_Y \big( \|T(x_1,\ldots,x_n) - T(x_1^k,\ldots,x_n^k)\|_Y \nonumber \\
&& \qquad\qquad\quad\, + \|T(x_1^k,\ldots,x_n^k) - T(u_1^k,\ldots,u_n^k)\|_Y \big) \nonumber \\
&\leq& c_Y(2 c_Y + 1)\sigma\,,
\end{eqnarray*}
and hence
\[
T(B_{X_1} \times \cdots \times B_{X_n})
\subset
\bigcup_{k=1}^s \big(T(u_1^k,\ldots,u_n^k) + c_Y(2 c_Y + 1)\sigma B_Y\big)\,.
\]

We now define
\[
C := 2 c_Z \big(\gamma c_Y(2 c_Y + 1) + 1\big)\,,
\]
and choose $N \in \mathbb N$ such that
\[
\|S_m T(u_1^k,\ldots,u_n^k)\|_Z \leq \sigma,
\quad\, 1 \leq k \leq s,\quad\, m \geq N\,.
\]
Then, by the above inclusion, for any $(x_1,\ldots,x_n) \in B_{X_1} \times \cdots \times B_{X_n}$ there exists $k \in \{1,\ldots,s\}$ such that
\[
\|T(x_1,\ldots,x_n) - T(u_1^k,\ldots,u_n^k)\|_Y \leq c_Y(2 c_Y + 1)\sigma\,.
\]
Thus, for each $m \geq N$, using the bound $\|S_m\| \le \gamma$, we have
\begin{eqnarray*}
\lefteqn{\|S_m T(x_1,\ldots,x_n)\|_Z} \nonumber \\
&\leq& c_Z \big( \|S_m T(x_1,\ldots,x_n) - S_m T(u_1^k,\ldots,u_n^k)\|_Z + \|S_m T(u_1^k,\ldots,u_n^k)\|_Z \big) \nonumber \\
&\leq& c_Z \big( \|S_m\| \|T(x_1,\ldots,x_n) - T(u_1^k,\ldots,u_n^k)\|_Y + \sigma \big) \nonumber \\
&\leq& c_Z \big( \gamma c_Y(2 c_Y + 1)\sigma + \sigma \big)
\leq
\frac{C\sigma}{2}\,.
\end{eqnarray*}
Hence
\[
\sup_{m \geq N} \|S_m T\| \leq \frac{C\sigma}{2}\,.
\]

If $\tilde{\beta} > 0$, we now choose $\sigma = 2\tilde{\beta}$ and obtain
\[
\sup_{m \geq N} \|S_m T\| \leq C \tilde{\beta}\,.
\]
If $\tilde{\beta} = 0$, then the same argument applies for every $\sigma > 0$, and therefore
\[
\|S_m T\| \to 0 \quad\, \text{as\,\, $m \to \infty$}\,.
\]
This completes the proof.
\end{proof}

\medskip

Given a quasi-Banach couple $\yo = (Y^0,Y^1)$, for each $m\in \mathbb{Z}$ we define $W_m := (Y^0 + Y^1, K(2^m,\cdot))$. Let $Q$ be the linear operator that assigns to any $y \in Y^0 + Y^1$ the sequence $Qy = (y)_m$, where all coordinates are equal to $y$. For $j = 0,1$, it follows that the mapping
\[
Q\colon Y^j \to \ell_\infty(2^{-mj}W_m)
\]
is bounded with norm at most $1$.

\begin{lemma}\label{lemma2}
Let $\xo_i = (X_i^0,X_i^1)$ for $1 \leq i \leq n$, and let $\yo = (Y^0,Y^1)$ be quasi-Banach couples. Let $T \in L(\xo_1,\ldots,\xo_n;\yo)$, and define
\[
\tilde{\beta}_j = \tilde{\beta}(T \colon X_1^j \times \cdots \times X_n^j \to Y^j), \quad\, j=0,1\,.
\]
Assume there exist quasi-Banach spaces $Z_1, \ldots, Z_n$ and sequences $(R^i_\nu)_{\nu \geq 1} \subset L(Z_i, X_i^0 \cap X_i^1)$ such that
\[
\sup_{\nu \geq 1} \|R^i_\nu\|_{L(Z_i, X_i^j)} \leq 1, \quad\, j = 0,1, \quad\, 1 \leq i \leq n,
\]
and
\[
\lim_{\nu \to \infty} \|T(R_\nu^1, \ldots, R_\nu^n)\|_{L(Z_1, \ldots, Z_n; Y^0 + Y^1)} = 0\,.
\]
Then, for each $j \in \{0,1\}$, the following assertions hold:
\begin{itemize}
\item[{\rm(a)}] If $\tilde{\beta}_j > 0$, then there exist a constant $C > 0$, independent of $T$, and a subsequence $(\nu_k)$ of $(\nu)_{\nu \geq 1}$ such that
\[
\limsup_{k \to \infty} \|Q T(R_{\nu_k}^{1}, \ldots, R_{\nu_k}^{n})\|_{L(Z_1, \ldots, Z_n; \ell_\infty(2^{-mj} W_m))} \leq C \tilde{\beta}_j.
\]

\item[{\rm(b)}] If $\tilde{\beta}_j = 0$, then there exists a subsequence $(\nu_k)$ such that
\[
\lim_{k \to \infty} \|Q T(R_{\nu_k}^{1}, \ldots, R_{\nu_k}^{n})\|_{L(Z_1, \ldots, Z_n; \ell_\infty(2^{-mj} W_m))} = 0.
\]
\end{itemize}
\end{lemma}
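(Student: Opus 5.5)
Fix $j\in\{0,1\}$. By symmetry I will treat $j=0$ in detail; the case $j=1$ is identical after replacing $K(2^m,\cdot)$ by $2^{-m}K(2^m,\cdot;Y^0,Y^1)=K(2^{-m},\cdot;Y^1,Y^0)$.

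\emph{Reduction to one vector per $\nu$.} For $y\in Y^0+Y^1$ we have $\|Qy\|_{\ell_\infty(W_m)}=\sup_m K(2^m,y)$. For each $\nu$ I choose $z^\nu=(z_1^\nu,\ldots,z_n^\nu)\in B_{Z_1}\times\cdots\times B_{Z_n}$ with
\[
\|QT(R^1_\nu,\ldots,R^n_\nu)\|\le 2\,\|Qy_\nu\|_{\ell_\infty(W_m)},\qquad y_\nu:=T(R^1_\nu z^\nu_1,\ldots,R^n_\nu z^\nu_n)
\]
(if the norm is zero, any $z^\nu$ will do). Since $\|R^i_\nu\|_{L(Z_i,X_i^0)}\le1$, every $y_\nu$ lies in $A:=T(B_{X_1^0}\times\cdots\times B_{X_n^0})$. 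The hypothesis gives $\|y_\nu\|_{Y^0+Y^1}\to0$.

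\emph{Covering and pigeonhole.} Let $\sigma>\tilde\beta_0$ when $\tilde\beta_0>0$, and let $\sigma>0$ be arbitrary when $\tilde\beta_0=0$. By definition of $\tilde\beta_0$, the set $A$ is covered by finitely many balls $w_k+\sigma B_{Y^0}$, $1\le k\le s$. Since the sequence $(y_\nu)$ takes values in this finite union, the pigeonhole principle yields a subsequence $(\nu_l)$ and a single index $k$ with $y_{\nu_l}\in w_k+\sigma B_{Y^0}$ for all $l$. Hence, for all $l,l'$,
\[
\|y_{\nu_l}-y_{\nu_{l'}}\|_{Y^0}\le 2c_{Y^0}\sigma .
\]

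\emph{Key step: passing to the limit in each $W_m$.} This is where the spaces $W_m$ enter, and I expect it to be the main obstacle. One cannot simply let $l'\to\infty$ in the $Y^0$-norm, because $B_{Y^0}$ need not be closed in $Y^0+Y^1$. Instead I work with each quasi-norm $K(2^m,\cdot)$ separately. Each $K(2^m,\cdot)$ is equivalent to $\|\cdot\|_{Y^0+Y^1}$, so $K(2^m,y_{\nu_{l'}})\to0$ as $l'\to\infty$ for each fixed $m$. Moreover $K(2^m,u)\le\|u\|_{Y^0}$. Therefore, for every $m$,
\[
K(2^m,y_{\nu_l})\le c\big(K(2^m,y_{\nu_l}-y_{\nu_{l'}})+K(2^m,y_{\nu_{l'}})\big)\le 2c\,c_{Y^0}\sigma+c\,K(2^m,y_{\nu_{l'}}),
\]
and letting $l'\to\infty$ gives $K(2^m,y_{\nu_l})\le 2cc_{Y^0}\sigma$. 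This bound is uniform in $m$ and $l$. Consequently
\[
\|QT(R^1_{\nu_l},\ldots,R^n_{\nu_l})\|\le 4cc_{Y^0}\sigma\quad\text{for all } l .
\]

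\emph{Conclusion.} For (a), I take $\sigma=2\tilde\beta_0$, which gives the bound with $C=8cc_{Y^0}$; this constant depends only on the quasi-triangle constants of $Y^0$, $Y^1$ and not on $T$. For (b), I run the argument with $\sigma=1/p$ for $p=1,2,\ldots$, each time extracting a further subsequence of the previous one. The diagonal subsequence then has norms tending to $0$. The only points to check carefully are that near-maximizers exist in the quasi-normed setting (the factor $2$ handles this) and that the pigeonhole step preserves the subsequence structure needed in (b); both are routine.
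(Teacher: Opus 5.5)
Your proposal is correct and follows essentially the paper's argument: near-maximizers $z^\nu$, pigeonholing the points $T(R^1_\nu z^1_\nu,\ldots,R^n_\nu z^n_\nu)$ into a single ball of a finite $\sigma$-cover in $Y^j$, and using their decay in $Y^0+Y^1$ to control every $K(2^m,\cdot)$ uniformly in $m$ through $K(2^m,\cdot)\le\|\cdot\|_{Y^0}$ and the quasi-triangle inequality. The differences are cosmetic. You bound $K(2^m,y_{\nu_l})$ directly through the differences $y_{\nu_l}-y_{\nu_{l'}}$, whereas the paper first bounds the center $Qy_k^j$. For (b) you use a diagonal extraction, whereas the paper avoids it by first passing to a subsequence along which the norms converge to some $\lambda$ and then showing $\lambda\le C\sigma/2$ for every admissible $\sigma$.
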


\begin{proof}
Fix $j \in \{0,1\}$. We consider the following diagram:

\medskip

\begin{tikzcd}
Z_1 \times \cdots \times Z_n \arrow[r,"(R_\nu^1 {,} \ldots {,} R_\nu^n) "] &[2em] X_1^j \times \cdots \times X_n^j \ar[r,"T"] &[0.1em] Y^j \ar[r,"Q"] &[0.1em] \ell^\infty(2^{-mj}W_m)
\end{tikzcd}

\medskip

\noindent
Since for each $\nu$,
\[
\|Q T(R_\nu^{1}, \ldots, R_\nu^n)\| \leq \|Q\| \|T\|_j \|R_\nu^1\|_{L(Z_1,X_1^j)} \cdots \|R_\nu^n\|_{L(Z_n,X_n^j)}\,,
\]
we have
\[
\sup_{\nu \in \mathbb{N}} \|Q T(R_\nu^{1}, \ldots, R_\nu^n)\| \leq \|T\|_j\,.
\]
Passing to a subsequence, we may assume that
\[
\lim_{\nu \to \infty} \|Q T(R_{\nu}^{1}, \ldots, R_{\nu}^n)\|_{L(Z_1,\ldots,Z_n;\ell_\infty(2^{-mj}W_m))} = \lambda \geq 0\,.
\]
For each $\nu$, choose $(z_{\nu}^1,\ldots,z_{\nu}^n) \in B_{Z_1}\times\cdots\times B_{Z_n}$ such that
\[
\|Q T(R_{\nu}^1 z_{\nu}^1, \ldots, R_{\nu}^n z_{\nu}^n)\|_{\ell_\infty(2^{-mj}W_m)}
\geq
\|Q T(R_{\nu}^{1}, \ldots, R_{\nu}^n)\|_{L(Z_1,\ldots,Z_n;\ell_\infty(2^{-mj}W_m))} - \frac{1}{\nu}\,.
\]
Since $(z_{\nu}^1,\ldots,z_{\nu}^n)\in B_{Z_1}\times\cdots\times B_{Z_n}$, we also have
\[
\|Q T(R_{\nu}^1 z_{\nu}^1, \ldots, R_{\nu}^n z_{\nu}^n)\|_{\ell_\infty(2^{-mj}W_m)}
\leq
\|Q T(R_{\nu}^{1}, \ldots, R_{\nu}^n)\|_{L(Z_1,\ldots,Z_n;\ell_\infty(2^{-mj}W_m))}.
\]
Hence
\[
\lim_{\nu \to \infty} \|Q T(R_{\nu}^1 z_{\nu}^1, \ldots, R_{\nu}^n z_{\nu}^n)\|_{\ell_\infty(2^{-mj}W_m)} = \lambda\,.
\]

Let $\sigma > \tilde{\beta}_j$. Then there exists a $\sigma$-net $\{y_1^j, \ldots, y_s^j\} \subset Y^j$ such that
\[
T(B_{X_1^j} \times \cdots \times B_{X_n^j}) \subset \bigcup_{k=1}^s (y_k^j + \sigma B_{Y^j})\,.
\]
Since $\|R_\nu^i\|_{L(Z_i,X_i^j)} \leq 1$ and $z_\nu^i \in B_{Z_i}$, we have $R_\nu^i z_\nu^i \in B_{X_i^j}$ for each $i$. Thus, again passing to a subsequence, we may assume that for some $k$, $1 \leq k \leq s$, one has
\begin{equation}
\label{eq5A}
T(R_{\nu}^1 z_{\nu}^1, \ldots, R_{\nu}^n z_{\nu}^n) \in y_k^j + \sigma B_{Y^j}
\end{equation}
for all $\nu$.

Let $m \in \mathbb{Z}$. Since
\[
\lim_{\nu \to \infty} \|T(R_{\nu}^1, \ldots, R_{\nu}^n)\|_{L(Z_1, \ldots, Z_n; Y^0 + Y^1)} = 0\,,
\]
there exists $l$ such that
\[
\|T(R_{l}^{1}, \ldots, R_{l}^n)\|_{L(Z_1, \ldots, Z_n; Y^0 + Y^1)} \leq \sigma 2^{mj} \max(1, 2^m)^{-1}\,.
\]
Then
\begin{align*}
2^{-mj} K(2^m, y_k^j)
&\leq c_{\yo} \bigg(
2^{-mj} K\big(2^m, y_k^j - T(R_{l}^1 z_{l}^1, \ldots, R_{l}^n z_{l}^n)\big) \\
&\quad\, + 2^{-mj} K\big(2^m, T(R_{l}^1 z_{l}^1, \ldots, R_{l}^n z_{l}^n)\big)
\bigg) \\
&\leq c_{\yo} \bigg(
\|y_k^j - T(R_{l}^1 z_{l}^1, \ldots, R_{l}^n z_{l}^n)\|_{Y^j} \\
&\quad\, + 2^{-mj} \max(1, 2^m)
\|T(R_{l}^1 z_{l}^1, \ldots, R_{l}^n z_{l}^n)\|_{Y^0 + Y^1}
\bigg) \\
&\leq 2 c_{\yo} \sigma\,.
\end{align*}
Hence
\[
\|Q y_k^j\|_{\ell^\infty(2^{-mj} W_m)} \leq 2 c_{\yo} \sigma\,.
\]

Since $\|Q\| \leq 1$, using \eqref{eq5A}, we obtain for every $\nu$,
\begin{eqnarray*}
\|Q T(R_{\nu}^1 z_{\nu}^{1}, \ldots, R_{\nu}^n z_{\nu}^n)\|_{\ell^\infty(2^{-mj} W_m)}
&\leq&
c_{\yo} \big( \|Q T(R_{\nu}^1 z_{\nu}^1, \ldots, R_{\nu}^n z_{\nu}^n) - Q y_k^j\|_{\ell^\infty(2^{-mj} W_m)} \\
&& \qquad\qquad + \|Q y_k^j\|_{\ell^\infty(2^{-mj} W_m)} \big) \\
&\leq&
c_{\yo} \big( \|Q\| \|T(R_{\nu}^1 z_{\nu}^1, \ldots, R_{\nu}^n z_{\nu}^n) - y_k^j\|_{Y^j} + 2 c_{\yo} \sigma \big) \\
&\leq&
c_{\yo} \big( \sigma + 2 c_{\yo} \sigma \big)\,.
\end{eqnarray*}
Thus, if $C = 2 c_{\yo}(1 + 2 c_{\yo})$, then
\[
\|Q T(R_{\nu}^1 z_{\nu}^{1}, \ldots, R_{\nu}^n z_{\nu}^n)\|_{\ell^\infty(2^{-mj} W_m)} \leq C \sigma / 2
\]
for every $\nu$. Passing to the limit as $\nu \to \infty$, we obtain
\[
\lambda \leq C \sigma / 2\,.
\]

If $\tilde{\beta}_j > 0$, taking $\sigma = 2 \tilde{\beta}_j$, we obtain
\[
\lim_{\nu \to \infty} \|Q T(R_{\nu}^{1}, \ldots, R_{\nu}^n)\|_{L(Z_1, \ldots, Z_n; \ell^\infty(2^{-mj} W_m))} \leq C \tilde{\beta}_j\,.
\]
If $\tilde{\beta}_j = 0$, then $\sigma > 0$ is arbitrary, and therefore $\lambda = 0$. Hence
\[
\lim_{\nu \to \infty} \|Q T(R_{\nu}^{1}, \ldots, R_{\nu}^n)\|_{L(Z_1, \ldots, Z_n; \ell^\infty(2^{-mj} W_m))} = 0\,.
\]
This completes the proof.
\end{proof}

\medskip

In what follows, for each integer $N \geq 0$, we define $\mathbb{R}_N := \prod_{k=-N}^N \mathbb{R}$. Thus, for a given $x \in \mathbb{R}_N$, we have
\[
x = (x_{-N}, \ldots, x_N) = \sum_{k=-N}^{N} x_k e_k\,,
\]
where $e_k = (0, \ldots, 0, 1, 0, \ldots, 0)$, with $1$ in the $k$-th position.

Let $E \subset \omega(\mathbb{Z})$ be a quasi-Banach $p$-normed sequence lattice, where $0 < p \leq 1$. For each integer $N \geq 0$, define on $\mathbb{R}_N$ the $p$-norm
\[
\|x\|_p = \Big( \sum_{k=-N}^{N} |x_k|^p \Big)^{1/p}, \quad x \in \mathbb{R}_N\,.
\]
Let $S \colon \mathbb{R}_N \to E$ be the operator given by $S(x) := \widetilde{x} = (\widetilde{x}_k)$, where
\[
\widetilde{x}_k =
\begin{cases}
x_k, & \text{if } |k| \leq N\,, \\
0, & \text{if } |k| > N\,.
\end{cases}
\]
We also define on $\mathbb{R}_N$ the $p$-norm $\|x\|_{\widetilde{E}} := \|\widetilde{x}\|_E$. Since $\mathbb{R}_N$ is finite-dimensional, the $p$-norms $\|x\|_{\widetilde{E}}$ and $\|x\|_p$ are equivalent on $\mathbb{R}_N$. Consequently, the unit ball $B_{\widetilde{E},N}$ of $(\mathbb{R}_N,\|\cdot\|_{\widetilde{E}})$ is compact. Hence, for every $\varepsilon > 0$, there exists a finite set $\{x^1,\ldots,x^s\} \subset \mathbb{R}_N$ such that for every $x \in B_{\widetilde{E},N}$ one has
\begin{equation}\label{net}
\min_{1 \leq k \leq s} \|x - x^k\|_{\widetilde{E}} \leq \varepsilon\,.
\end{equation}

The following result from \cite{CLM1} will be useful in the subsequent discussion.

\begin{lemma}\label{lemma1}
Let $0 < q_0, q_1 \leq \infty$, and let $(W_m)_{m \in \mathbb{Z}}$ be a sequence of quasi-Banach spaces such that the quasi-triangle constant is the same for all $W_m$
 Then the pair $\big(\ell_{q_0}(W_m), \ell_{q_1}(2^{-m}W_m)\big)$ is a $p$-normed quasi-Banach couple for some $0 < p \leq 1$. Moreover, 
if $E$ is a quasi-Banach lattice satisfying condition \eqref{cite1}, then
\[
(\ell_{q_0}(W_m), \ell_{q_1}(2^{-m}W_m))_E = E(W_m)
\]
with equivalent quasi-norms.
\end{lemma}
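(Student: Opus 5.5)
The proof has two parts. First I will verify that the pair is a $p$-normed quasi-Banach couple. Then I will identify the interpolation space by a chain of embeddings
\[
E(W_m)\hookrightarrow J_E \hookrightarrow K_E \hookrightarrow E(W_m).
\]
The point is that $E(W_m)$ is realised exactly through diagonal $J$-decompositions and through single coordinates of the $K$-functional. Throughout, write $X^0=\ell_{q_0}(W_m)$ and $X^1=\ell_{q_1}(2^{-m}W_m)$.

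\emph{Step 1: the couple.} Let $\kappa$ be the common quasi-triangle constant of the $W_m$. By the Aoki--Rolewicz theorem, each $W_m$ carries an equivalent $p_0$-norm, with $p_0$ and the equivalence constants depending only on $\kappa$, hence uniform in $m$. Replacing $\|\cdot\|_{W_m}$ by these norms, I take $p=\min(p_0,q_0,q_1,1)$. A direct check then shows that $\ell_{q}(\lambda_m W_m)$ is $\min(p_0,q)$-normed, up to a uniformly equivalent quasi-norm. Hence both $X^0$ and $X^1$ are $p$-normed; completeness is standard. Both spaces embed continuously in $\prod_m W_m$ with the product topology, which is a Hausdorff topological vector space, since each coordinate functional is bounded on $X^0$ and on $X^1$.

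\emph{Step 2: the properties of $E$ needed to invoke $\xo_{E,K}=\xo_{E,J}$.} I will extract exponential decay from \eqref{cite1} by submultiplicativity. Choose $N$ with $\|\tau_{-N}\|_{E\to E}<1$ and $2^{-N}\|\tau_N\|_{E\to E}<1$. Then $\|\tau_{-nN}\|\le\|\tau_{-N}\|^n$ and $2^{-nN}\|\tau_{nN}\|\le(2^{-N}\|\tau_N\|)^n$. Together with $\|\tau_{\pm1}\|<\infty$, this gives constants $C$ and $\rho<1$ such that
\[
\|\tau_{-k}\|\le C\rho^k, \qquad 2^{-k}\|\tau_k\|\le C\rho^k, \qquad k\ge0.
\]
Writing $\Omega_p\xi$ as $\bigl(\sum_j \min(1,2^{-j})^p|\tau_{-j}\xi|^p\bigr)^{1/p}$ and using the $p$-norm of $E$, one gets
\[
\|\Omega_p\xi\|_E^p\le \sum_j \min(1,2^{-j})^p\|\tau_{-j}\|^p\|\xi\|_E^p,
\]
and the series converges by the geometric bounds. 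The same mechanism yields $K$-non-triviality and $(p,J)$-non-triviality. The latter follows by testing against $\delta_0$ and comparing $|\xi_m|\,\|e_0\|_E \le \|\tau_m\xi\|_E\le\|\tau_m\|\,\|\xi\|_E$. I expect this step to be the main technical obstacle: one has to juggle the $p$-convexification of $E$, which holds after renorming since $E$ is $p$-normed for a possibly smaller $p$, and check that the resulting $p$ is compatible with Step 1 by decreasing $p$ if necessary.

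\emph{Step 3: the embeddings.} For $E(W_m)\hookrightarrow J_E$, take $w=(w_m)\in E(W_m)$ and set $u_k:=w_ke_k$, the sequence with $w_k$ in position $k$ and zeros elsewhere. Then $u_k\in X^0\cap X^1$ and
\[
J(2^k,u_k)=\max\bigl(\|w_k\|,\,2^k2^{-k}\|w_k\|\bigr)=\|w_k\|.
\]
Convergence of $\sum_k u_k$ in $X^0+X^1$ follows from $(p,J)$-non-triviality, because $K(1,u_k)\le\min(1,2^{-k})\|w_k\|$ and the $p$-sum of these is finite. Hence $\|w\|_{J_E}\le\|w\|_{E(W_m)}$. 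The embedding $J_E\hookrightarrow K_E$ is Step 2. For $K_E\hookrightarrow E(W_m)$, take any decomposition $w=a+b$ with $a\in X^0$, $b\in X^1$. Looking at coordinate $k$ gives
\[
\|w_k\|\le\kappa\bigl(\|a_k\|+\|b_k\|\bigr)\le\kappa\bigl(\|a\|_{X^0}+2^k\cdot2^{-k}\|b_k\|\bigr)\le\kappa\bigl(\|a\|_{X^0}+2^k\|b\|_{X^1}\bigr).
\]
Taking the infimum over decompositions yields $\|w_k\|\le\kappa K(2^k,w)$, and the lattice property of $E$ gives $\|w\|_{E(W_m)}\le\kappa\|w\|_{K_E}$. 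Chaining the three embeddings proves equality with equivalent quasi-norms.
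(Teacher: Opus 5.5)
The paper does not prove this lemma; it quotes it from \cite{CLM1}, so there is no argument in the text to compare against. Judged on its own, your chain $E(W_m)\hookrightarrow J_E\hookrightarrow K_E\hookrightarrow E(W_m)$ is the right architecture. The following parts are correct: Step 1, the diagonal $J$-decomposition, the coordinatewise bound $\|w_k\|\le\kappa K(2^k,w)$, the geometric decay of $\|\tau_{-k}\|$ and $2^{-k}\|\tau_k\|$, and your derivations of $K$- and $(p,J)$-non-triviality. Those last two only need $p$-subadditivity on ordinary sums, or a pointwise bound.

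The gap is the boundedness of $\Omega_p$ in Step 2.

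First, an index slip. One has $(\Omega_p\xi)_m=\bigl(\sum_j\min(1,2^{-j})^p|\xi_{m+j}|^p\bigr)^{1/p}$, so the shifts are $\tau_j$, not $\tau_{-j}$. With your indexing, the series $\sum_j\min(1,2^{-j})^p\|\tau_{-j}\|^p$ contains the terms $\|\tau_k\|^p$, $k\to\infty$, with weight $1$. It therefore diverges; for instance $\|\tau_k\|=2^{\theta k}$ on $\ell_q(2^{-\theta m})$.

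More seriously, your displayed inequality does not follow from the $p$-norm of $E$. $p$-subadditivity bounds $\|\sum_j|\eta_j|\|_E$, but for $p<1$ one has $(\sum_j|\eta_j|^p)^{1/p}\ge\sum_j|\eta_j|$ pointwise, so it gives nothing here. What you are actually invoking is lattice $p$-convexity. Your justification that this ``holds after renorming since $E$ is $p$-normed'' is false: a general quasi-Banach lattice need not be $r$-convex for any $r>0$.

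This step cannot be sidestepped either. For $W_m=\mathbb{R}$ and $q_0=q_1=p$ one computes $K_p(2^m,\xi)=(\Omega_p\xi)_m$. In that case the lemma is equivalent to the boundedness of $\Omega_p$ on $E$.

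The repair needs only the geometric decay you already established. With $c_j=\min(1,2^{-j})$ you have $c_j\|\tau_j\|_{E\to E}\le C\rho^{|j|}$ for all $j\in\mathbb{Z}$. For nonnegative numbers $b_j$,
\[
\Bigl(\sum_j b_j^p\Bigr)^{1/p}\le\Bigl(\sum_j\rho^{p|j|/2}\Bigr)^{1/p}\sup_j\rho^{-|j|/2}b_j\le C_p\sum_j\rho^{-|j|/2}b_j ,
\]
so $\Omega_p\xi\le C_p\sum_j\rho^{-|j|/2}c_j|\tau_j\xi|$ pointwise. If $\|\cdot\|_E$ is a lattice $s$-norm for some $0<s\le 1$, this yields
\[
\|\Omega_p\xi\|_E^s\le C_p^sC^s\Bigl(\sum_j\rho^{s|j|/2}\Bigr)\|\xi\|_E^s .
\]
This uses no convexity of $E$ and no relation between $s$ and $p$, so the ``juggling'' of exponents you anticipated disappears. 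With this replacement (and the corrected indexing), your proof goes through.
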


\vspace{2 mm}

\begin{theorem}\label{main}
Let $n \geq 2$. Let $\xo_i = (X_i^0, X_i^1)$ for $1 \leq i \leq n$ be $p_i$-normed quasi-Banach couples, and let $\yo = (Y^0, Y^1)$ be an $r$-normed quasi-Banach couple, where $0 < p_i, r \leq 1$. Assume that $E$ is a $K$-non-trivial quasi-Banach sequence lattice satisfying \eqref{cite1}, and that, for each $1 \leq i \leq n$, $E_i$ is a $K$-non-trivial, $(p_i,J)$-non-trivial quasi-Banach sequence lattice such that $\Omega_{p_i}$ is bounded on $E_i$ and $E_i$ satisfies the analogue of \eqref{cite1}. Furthermore, suppose that there exists a constant $\gamma > 0$ such that, for all sequences $\xi_1 \in E_1, \ldots, \xi_n \in E_n$, the following inequality holds{\rm:}
\[
\big\|\big(|\xi_1|^r \ast \cdots \ast |\xi_n|^r\big)^{1/r}\big\|_E \leq \gamma \|\xi_1\|_{E_1} \cdots \|\xi_n\|_{E_n}\,.
\]
Let $T \colon \xo_1 \times \cdots \times \xo_n \to \yo$ be an $n$-linear mapping, and for $j = 0,1$ set
\[
\tilde{\beta}_j := \tilde{\beta}(T \colon X_1^j \times \cdots \times X_n^j \to Y^j)\,.
\]
Then the restriction of $T$ is bounded from $K_{E_1}(\xo_1) \times J_{E_2}(\xo_2) \times \cdots \times J_{E_n}(\xo_n)$ to $K_E(\yo)$, and
\[
\widetilde{\beta}(T) = \widetilde{\beta}\big(T \colon K_{E_1}(\xo_1) \times J_{E_2}(\xo_2) \times \cdots \times J_{E_n}(\xo_n) \to K_E(\yo)\big)
\]
satisfies the estimate
\[
\widetilde{\beta}(T) \leq C \tilde{\beta}_0 \varphi_{E_2}\bigg(\bigg(\frac{\tilde{\beta}_1}{\tilde{\beta}_0}\bigg)^{1/(n-1)}\bigg) \cdots \varphi_{E_n}\bigg(\bigg(\frac{\tilde{\beta}_1}{\tilde{\beta}_0}\bigg)^{1/(n-1)}\bigg)\,,
\]
provided that $\widetilde{\beta}_0 > 0$ and $\widetilde{\beta}_1 > 0$, where $C$ is a constant independent of $T${\rm;} otherwise, $\widetilde{\beta}(T)=0$.
\end{theorem}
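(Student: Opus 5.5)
Boundedness is already Theorem \ref{conv}, so only the estimate for $\widetilde{\beta}(T)$ needs proof. The plan follows the linear/bilinear scheme of \cite{CLM1,BC}: replace the couples by vector-valued sequence couples and cut $T$ into a finite-rank middle piece and two tail pieces. Lemma \ref{lemma1} gives, for each $i$, the identifications $(\xo_i)_{E_i}$ as a quotient of $J_{E_i}(\ell_{p_i}(X_i^0\cap X_i^1\text{-pieces}))$ via $\pi_i(u)=\sum_m u_m$. This is the $J$-representation, and the surjectivity clause of property (v) keeps $\widetilde\beta$ unchanged up to a constant. Likewise, $K_E(\yo)$ embeds metrically through $Q$ into $E(W_m)=(\ell_\infty(W_m),\ell_\infty(2^{-m}W_m))_E$, and property (iv) costs only a factor $2c$. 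So it suffices to bound $\widetilde\beta(QT(\pi_1,\dots,\pi_n))$.

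For the decomposition, take $N$ large and let $P_N$ be the coordinate projection onto $|m|\le N$ on the sequence side, with $P_N^c=I-P_N$. Expand
\[
QT(\pi_1,\dots,\pi_n)=\sum_{\epsilon\in\{N,c\}^n} QT(\pi_1P^{\epsilon_1},\dots,\pi_nP^{\epsilon_n}),
\]
and also split the output by projections $P^+_M$ and $P^-_M$ (indices $m>M$ and $m<-M$) plus a middle piece on $\ell_\infty(W_m)$. This yields finitely many terms, to be combined using (vi) with the quasi-norm constant.

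The terms fall into three groups.
\begin{itemize}
\item[(1)] When every input is truncated, $\pi_iP_N$ factors through finite sums $u_{-N}+\dots+u_N$. The relevant estimate is then at the endpoint $j$: by Lemma \ref{lemma2}, with $R^i_\nu$ built from the truncated maps (or a direct net argument using \eqref{net}), the middle output parts are controlled by $\tilde\beta_0$ on the lower indices and by $\tilde\beta_1$ on the upper ones.
\item[(2)] The output tails $P^\pm_M$ are controlled via Lemma \ref{lemma2}(a),(b). In $\ell_\infty(2^{-mj}W_m)$ the norms are at most $C\tilde\beta_j$ asymptotically. Feeding these endpoint bounds into Theorem \ref{conv} for the operator $P^\pm_M QT$, whose endpoint norms are now $\lesssim\tilde\beta_0$ and $\tilde\beta_1$ rather than $\|T\|_j$, gives $C\tilde\beta_0\prod_i\varphi_{E_i}((\tilde\beta_1/\tilde\beta_0)^{1/(n-1)})$. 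The point is that Theorem \ref{conv}'s bound depends only on the endpoint norms.
\item[(3)] Terms containing some $P_N^c$ factor are small as $N\to\infty$. I would try to show this from the order continuity implicit in \eqref{cite1} together with Lemma \ref{lemma} applied with $S_N=QP^c_N$-type maps on the dense subspaces $X_i^0\cap X_i^1$. This yields $\|\cdot\|\le C\widetilde\beta$-type bounds rather than vanishing ones, which is again acceptable.
\end{itemize}

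I expect group (2) to be the main obstacle. The required fact is that multiplying $T$ by output cutoffs yields an operator to which Theorem \ref{conv} applies with endpoint constants replaced by $\tilde\beta_j$, uniformly in the cutoff. In addition, the degenerate cases must be handled: if $\tilde\beta_0=0$ or $\tilde\beta_1=0$, the same argument together with Lemma \ref{lemma2}(b) and the last clause of Theorem \ref{conv} gives $\widetilde\beta(T)=0$.
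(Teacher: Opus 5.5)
\paragraph{The main gap: the middle piece in group (1).}
Your group (1) is where the main estimate has to be proved, and the proposal does not prove it.

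First, the all-truncated middle piece $P_MQT(\pi_1P_N,\dots,\pi_nP_N)$ is not finite rank. Each coordinate space $F_m^i=(X_i^0\cap X_i^1,J(2^m,\cdot))$ is infinite-dimensional. Its endpoint norms are in general of the size of $\|T\|_j$, not $\tilde\beta_j$; take any nonzero $T$ compact at both endpoints. So no argument of the form ``endpoint norms $\lesssim\tilde\beta_j$, then Theorem~\ref{conv}'' reaches it. Its measure of noncompactness must be estimated directly.

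Second, Lemma~\ref{lemma2} does not apply. Its hypothesis $\|T(R^1_\nu,\dots,R^n_\nu)\|_{L(Z_1,\dots,Z_n;Y^0+Y^1)}\to0$ fails for truncations, which tend to the identity rather than to zero.

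Third, controlling ``lower output indices by $\tilde\beta_0$ and upper ones by $\tilde\beta_1$'' fails. Whether a term $T(u_{k_1},\dots,u_{k_n})$ should be approximated in $Y^0$ or in $Y^1$ depends on $m-(k_1+\dots+k_n)$, not on $m$ alone. A split by output index leaves error terms without the factor $\min\bigl(1,2^{m-(k_1+\dots+k_n)}\bigr)$. Such terms cannot be summed by the convolution hypothesis uniformly in $N$.

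The missing idea is the paper's Step~2, a simultaneous two-endpoint net:
\begin{itemize}
\item[(a)] Fix $\sigma_j>\tilde\beta_j$ and $N_0$ with $2^{(n-1)N_0}\le\sigma_1/\sigma_0<2^{(n-1)(N_0+1)}$.
\item[(b)] Finite $\eta$-nets of the unit balls of $(\mathbb R_N,\|\cdot\|_{\widetilde E_i})$ yield finitely many weight sequences $\varphi_k^j,\psi_{i,k}^j$ dominating the $J(2^k,u_k)$.
\item[(c)] For each index tuple, pick $g$ in the intersection of a scaled $\sigma_0$-cover piece in $Y^0$ and a scaled $\sigma_1$-cover piece in $Y^1$. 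Sum these along the convolution pattern into a single $\xi\in Y^0\cap Y^1$.
\item[(d)] Using $2^{-(n-1)N_0}\sigma_1<2^{n-1}\sigma_0$, this bounds $J(2^k,T_k-\xi_k)$ by $2^n\sigma_0$ times a convolution of the weights.
\item[(e)] The convolution hypothesis and $\|\tau_{N_0}\|_{E_i}=\varphi_{E_i}(2^{N_0})$ then give $C\sigma_0\prod_{i\ge2}\varphi_{E_i}((\sigma_1/\sigma_0)^{1/(n-1)})$ in $J_E(\vec{Y})=K_E(\vec{Y})$.
\end{itemize}
This step, not group (2), is the real obstacle.

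\paragraph{The other groups.}
In group (3), condition \eqref{cite1} does not give order continuity; consider $E_i=\ell_\infty(2^{-\theta m})$. Lemma~\ref{lemma} also acts only through operators on the output side. A ``$C\widetilde\beta$-type'' bound is circular if $\widetilde\beta$ is the quantity being estimated. In the paper, these mixed terms vanish because the input cutoff exceeds the output cutoff. (The paper writes $4\nu$ against $(n+1)\nu$; for $n\ge3$ the input cutoff must be raised, e.g.\ to $(n+2)\nu$.) One endpoint norm then decays geometrically while the other stays bounded by a norm of $T$. Theorem~\ref{conv}, with $\varphi_{E_i}(t)/t\to0$ as $t\to\infty$ or $\varphi_{E_i}(t)\to0$ as $t\to0$, kills them.

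For the output tail $P^+_{(n+1)\nu}$, each input must be split into a low part $R_\nu+R^-_\nu$ and a high part $R^+_\nu$:
\begin{itemize}
\item[(a)] The all-low term has endpoint-$1$ norm at most $2^{-(n+1)\nu}2^{n\nu}\|T\|_0\to0$, and is killed as above.
\item[(b)] Every term with a high input satisfies the hypothesis of Lemma~\ref{lemma2} at $j=0$.
\item[(c)] The $j=1$ bound for these terms comes from Lemma~\ref{lemma} with $S_\nu=P^+_{(n+1)\nu}$ on finitely supported sequences.
\item[(d)] Only then does Theorem~\ref{conv} give the product bound. The tail $P^-$ is symmetric.
\end{itemize}
Your split into $P_N$ and $P_N^c$ mixes the low and high regimes, so Lemma~\ref{lemma2} cannot be applied to it as you state.
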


\medskip

\begin{proof}
Let $p := \min\{p_1,\ldots,p_n,r\}$. Then all spaces $X_i^0$, $X_i^1$, $Y^0$, and $Y^1$ are $p$-normed. Hence the spaces
\[
F_m^i := (X_i^0 \cap X_i^1, J(2^m, \cdot, X_i^0, X_i^1)), \quad\, 1\leq i \leq n-1,
\]
and
\[
G_m := (X_n^0 \cap X_n^1, J(2^m, \cdot, X_n^0, X_n^1)), \quad\, m \in \mathbb{Z},
\]
are also $p$-normed.

We consider the $p$-normed quasi-Banach couples
\[
\vec{F}_i := (\ell_p(F_m^i), \ell_p(2^{-m}F_m^i)), \quad\, 1 \leq i \leq n-1,
\]
and
\[
\vec{G} := (\ell_p(G_m), \ell_p(2^{-m}G_m))\,.
\]
From Lemma \ref{lemma1}, we have identities with equivalent quasi-norms
\[
(\ell_p(F_m^i), \ell_p(2^{-m}F_m^i))_{E_i} = E_i(F_m^i), \quad\, 1 \leq i \leq n-1\,,
\]
and
\[
(\ell_p(G_m), \ell_p(2^{-m}G_m))_{E_n} = E_n(G_m)\,.
\]

For each $1 \leq i \leq n-1$, define a linear mapping $\pi_i \colon E_i(F_m^i) \to J_{E_i}(\xo_i)$ by
\[
\pi_i((u_m)) := \sum_{m=-\infty}^{\infty} u_m, \quad\, (u_m) \in E_i(F_m^i)\,.
\]
Then $\pi_i$ is a bounded surjection with norm at most $1$. Indeed, if $(u_m)\in E_i(F_m^i)$, then $(J(2^m,u_m))\in E_i$, and therefore
\[
\|\pi_i((u_m))\|_{J_{E_i}(\xo_i)} \leq \|(J(2^m,u_m))\|_{E_i} = \|(u_m)\|_{E_i(F_m^i)}\,.
\]
Conversely, by the definition of the quasi-norm on $J_{E_i}(\xo_i)$, if $x\in J_{E_i}(\xo_i)$ and $\|x\|_{J_{E_i}(\xo_i)}<1$, then there exists $(u_m)\in E_i(F_m^i)$ such that
\[
x=\sum_{m=-\infty}^\infty u_m, \quad\, \|(u_m)\|_{E_i(F_m^i)}<1\,.
\]

Similarly, define a linear mapping $\pi_n \colon E_n(G_m) \to J_{E_n}(\xo_n)$ by
\[
\pi_n((u_m)) := \sum_{m=-\infty}^{\infty} u_m, \quad\, (u_m) \in E_n(G_m)\,.
\]
Then, by the same argument, $\pi_n$ is a bounded surjection with norm at most $1$.

For $j = 0,1$, the mappings
\[
\pi_i \colon \ell_p(2^{-mj}F_m^i) \to X_i^j, \quad\, 1 \leq i \leq n-1,
\]
and
\[
\pi_n \colon \ell_p(2^{-mj}G_m) \to X_n^j
\]
are bounded with norm at most $1$.

Now, for each $m\in \mathbb{Z}$, let
\[
W_m := (Y^0 + Y^1, K(2^m, \cdot, Y^0, Y^1))
\]
and
\[
W_\infty := (\ell_\infty(W_m), \ell_\infty(2^{-m}W_m))\,.
\]
Define the linear mapping $Q$ by
\[
Qy := (y)_m = (\ldots, y, y, y, \ldots), \quad\, y \in Y^{0} + Y^{1}\,.
\]
Then $Q\colon K_E(\yo) \to E(W_m)$ is an isometric embedding. We also have that $Q\colon Y^j \to \ell_\infty(2^{-mj}W_m)$ is 
bounded with norm at most $1$, for $j = 0,1$. By Lemma \ref{lemma1}, we obtain
\[
(\ell_\infty(W_m), \ell_\infty(2^{-m}W_m))_E = E(W_m)
\]
with equivalent quasi-norms.

For $j=0,1$, we consider the diagrams
\[
\Big(\prod_{i=1}^{n-1} \ell_p(2^{-mj} F_m^i)\Big) \times \ell_p(2^{-mj} G_m)
\xrightarrow{(\pi_1,\ldots,\pi_n)}
\Big(\prod_{i=1}^{n} X_i^j\Big)
\stackrel{T}{\longrightarrow}
Y^j
\stackrel{Q}{\longrightarrow}
\ell_\infty(2^{-mj} W_m)\,.
\]
Hence, by Theorem \ref{conv}, we obtain
\[
\Big(\prod_{i=1}^{n-1} E_i(F_m^i)\Big) \times E_n(G_m)
\xrightarrow{(\pi_1,\ldots,\pi_n)}
K_{E_1}(\xo_1) \times \Big(\prod_{i=2}^{n} J_{E_i}(\xo_i)\Big)
\stackrel{T}{\longrightarrow}
K_E(\yo)
\stackrel{Q}{\longrightarrow}
E(W_m)\,.
\]
Here we use the fact that $K_{E_1}(\xo_1)=J_{E_1}(\xo_1)$ up to equivalence of quasi-norms.

Observe that the operator $\widehat{T}$ given by
\[
\widehat{T} := Q T (\pi_1, \ldots, \pi_n)
\]
maps $\vec{F}_1 \times \cdots \times \vec{F}_{n-1} \times \vec{G}$ to $W_\infty$ and is bounded. Then, from properties {\rm(v)} and {\rm(vi)} of MNC, and the properties of $Q$ and $\pi_i$, we obtain
\begin{align}
\widetilde{\beta}(T)
&\leq 2 C_E \widetilde{\beta}\big(Q T \colon K_{E_1}(\xo_1) \times J_{E_2}(\xo_2) \times \cdots \times J_{E_n}(\xo_n) \to E(W_m)\big) \nonumber \\
&\leq 2 C_E \widetilde{\beta}\big(\widehat{T} \colon E_1(F_m^1) \times \cdots \times E_{n-1}(F_m^{n-1}) \times E_n(G_m) \to E(W_m)\big)\,.
\end{align}

For all $\nu \in \mathbb{N}$, and for each $i \in \{1, \ldots, n-1\}$, we consider on $\ell_p(2^{-mj} F_m^i)$, $j=0,1$, the bounded operators $R_{i,\nu}$, $R_{i,\nu}^{+}$, and $R_{i,\nu}^{-}$, given for all $(u_m) \in \ell_p(2^{-mj} F_m^i)$ by
\[
R_{i,\nu}(u_m) = (\ldots, 0, 0, u_{-\nu}, \ldots, u_\nu, 0, 0, \ldots)\,,
\]
\[
R_{i,\nu}^{+}(u_m) = (\ldots, 0, 0, \ldots, 0, u_{\nu+1}, u_{\nu+2}, \ldots)\,,
\]
\[
R_{i,\nu}^{-}(u_m) = (\ldots, u_{-\nu-3}, u_{-\nu-2}, u_{-\nu-1}, 0, 0, \ldots)\,.
\]
If $I$ is the identity on $\ell_p(F_m^i) + \ell_p(2^{-m}F_m^i)$, then
\[
I = R_{i,\nu} + R_{i,\nu}^{+} + R_{i,\nu}^{-}, \quad\, \nu \in \mathbb{N}\,.
\]
Also,
\[
R_{i,\nu} \colon \ell_p(2^{-mj} F_m^i) \to \ell_p(2^{-mj} F_m^i), \quad\, j = 0,1,
\]
is bounded, with norm at most $1$. The same holds for $R_{i,\nu}^{+}$ and $R_{i,\nu}^{-}$ for $1 \leq i \leq n-1$. Clearly,
\[
R_{i,\nu}, R_{i,\nu}^{+}, R_{i,\nu}^{-} \colon E_i(F_m^i) \longrightarrow E_i(F_m^i), \quad\, 1 \leq i \leq n-1\,.
\]
Moreover, we have the following boundedness results:
\[
R_{i,\nu} \colon  \ell_p(F^i_m) + \ell_p(2^{-m} F^i_m) \longrightarrow \ell_p(F^i_m) \cap \ell_p(2^{-m} F^i_m)\,,
\]
\[
R_{i,\nu}^{+} \colon \ell_p(F^i_m) \longrightarrow \ell_p(2^{-m} F^i_m)\,,
\]
\[
R_{i,\nu}^{-} \colon \ell_p(2^{-m} F^i_m) \longrightarrow \ell_p(F^i_m)\,.
\]
Additionally, we have the following inequalities for the norms of these operators:
\begin{equation} \label{eq1}
\|R_{i,\nu}\|_{\ell_p(F^i_m) + \ell_p(2^{-m} F^i_m) \to \ell_p(F^i_m) \cap \ell_p(2^{-m} F^i_m)} \leq C_{\mathcal{X}_i} 2^{1/p} 2^{\nu}\,,
\end{equation}

\begin{equation} \label{eq2}
\|R_{i,\nu}^+\|_{\ell_p(F^i_m) \to \ell_p(2^{-m} F^i_m)} = \|R_{i,\nu}^{-}\|_{\ell_p(2^{-m} F^i_m) \to \ell_p(F^i_m)} = 2^{-(\nu + 1)}\,.
\end{equation}
Let $S_\nu, S_\nu^+, S_\nu^-$ be operators defined on $\ell_p(2^{-mj} G_m)$, and $P_\nu, P_\nu^+, P_\nu^-$ be operators defined on $\ell_\infty(2^{-mj} W_m)$, 
similar to the operators $R_\nu, R_\nu^+$, and $R_\nu^-$, respectively.  Note that these operators also satisfy (\ref{eq1}) and (\ref{eq2}).  Now, we decompose 
$\widehat{T}$ as follows:
\small
\begin{align} \label{dec1}
\lefteqn{\widehat{T} = I \widehat{T} = (P_{(n+1)\nu} + P_{(n+1)\nu}^+ + P_{(n+1)\nu}^-) \widehat{T} (I,\ldots,I,I)} \\ \nonumber
&= P_{(n+1)\nu} \widehat{T} (R_{1,4\nu} + R_{1,4\nu}^+ + R_{1,4\nu}^{-}, \ldots, R_{n-1,4\nu} + R_{n-1,4\nu}^+ + R_{n-1,4\nu}^-, S_{4\nu} + S_{4\nu}^+ + S_{4\nu}^-) \\ \nonumber
&\quad + P_{(n+1)\nu}^+ \widehat{T} + P_{(n+1)\nu}^- \widehat{T}\,.
\end{align}
\normalsize

Now we need to analyze each one of the three terms in the last line of (\ref{dec1}). We begin with 
\[
P_{(n+1)\nu} \widehat{T} (R_{1,4\nu} + R_{1,4\nu}^{+} + R_{1,4\nu}^{-}, \ldots, R_{n-1,4\nu} + R_{n-1,4\nu}^{+} + R_{n-1,4\nu}^{-}, S_{4\nu} + S_{4\nu}^{+} + S_{4\nu}^{-})\,,
\]
and since $T$ is $n$-linear, when we expand the first $n-1$ terms, we have:

\begin{itemize}
\item[{\bf (i)}] One term with no sign $+$ or $-$ in the $R$ operators, where
\begin{align} \label{dec2}
& P_{(n+1)\nu} \hat{T} (R_{1,4\nu}, \ldots, R_{n-1,4\nu}, S_{4\nu} + S_{4\nu}^+ + S_{4\nu}^-) 
= P_{(n+1)\nu} \hat{T} (R_{1,4\nu}, \ldots, R_{n-1,4\nu}, S_{4\nu}) \\ \nonumber
& \quad + P_{(n+1)\nu} \hat{T} (R_{1,4\nu}, \ldots, R_{n-1,4\nu}, S_{4\nu}^{+}) 
+ P_{(n+1)\nu} \hat{T} (R_{1,4\nu}, \ldots, R_{n-1,4\nu}, S_{4\nu}^{-})\,.
\end{align}
\item[{\bf (ii)}] One term where we have only $R^+$ operators in the first $n-1$ positions, that is,
\begin{align} \label{dec3}
& P_{(n+1)\nu} \widehat{T} (R_{1,4\nu}^{+}, \ldots, R_{n-1,4\nu}^{+}, S_{4\nu} + S_{4\nu}^{+} + S_{4\nu}^{-}) \\ \nonumber
& \quad = P_{(n+1)\nu} \widehat{T} (R_{1,4\nu}^{+}, \ldots, R_{n-1,4\nu}^+, S_{4\nu} + S_{4\nu}^+) 
+ P_{(n+1)\nu} \hat{T} (R_{1,4\nu}^+, \ldots, R_{n-1,4\nu}^+, S_{4\nu}^{-}).
\end{align}
\item[{\bf (iii)}] One term where we have only $R^-$ operators in the first $n-1$ positions, that is,
\begin{align} \label{dec4}
& P_{(n+1)\nu} \hat{T} (R_{1,4\nu}^{-}, \ldots, R_{n-1,4\nu}^-, S_{4\nu} + S_{4\nu}^{+} + S_{4\nu}^-) \\ \nonumber 
&\quad = P_{(n+1)\nu} \hat{T} (R_{1,4\nu}^{-}, \ldots, R_{n-1,4\nu}^-, S_{4\nu} + S_{4\nu}^-) 
+ P_{(n+1)\nu} \hat{T} (R_{1,4\nu}^-, \ldots, R_{n-1,4\nu}^{-}, S_{4\nu}^{+})\,.
\end{align}
\item[{\bf (iv)}] Terms with at least one $R$ together with one or more $R^+$ and/or $R^-$. The number of these terms is given by  
\[
\binom{n-1}{1} 2^{n-2} + \binom{n-1}{2} 2^{n-3} + \cdots + \binom{n-1}{n-2} 2 = 3^{n-1} - 2^{n-1} - 1.
\]
\item[{\bf (v)}] Terms having only operators $R^+$ and $R^-$, with at least one of each. The total number of these terms is $2^{n-1} - 2$.
\end{itemize} 

{\bf Step 2}. We begin with {\rm(i)}, considering
\[
P_{(n+1)\nu} \widehat{T} (R_{1,4\nu}, \ldots, R_{n-1,4\nu}, S_{4\nu})\,.
\]
From property {\rm(v)} of MNC, we have
\begin{align*}
& \widetilde{\beta}(P_{(n+1)\nu} \widehat{T} (R_{1,4\nu}, \ldots, R_{n-1,4\nu}, S_{4\nu})) \\
&\leq \|P_{(n+1)\nu}\|_{E(W_m)\to E(W_m)} \|Q\|_{K_E(\yo)\to E(W_m)} \\
&\quad\, \times \widetilde{\beta}\big(T (\pi_1 R_{1,4\nu}, \ldots, \pi_{n-1} R_{n-1,4\nu}, \pi_n S_{4\nu}) \colon E_1(F_m^1) \times \cdots \times E_{n-1}(F_m^{n-1}) \times E_n(G_m) \to K_E(\yo)\big) \\
&\leq C \, \widetilde{\beta}\big(T (\pi_1 R_{1,4\nu}, \ldots, \pi_{n-1} R_{n-1,4\nu}, \pi_n S_{4\nu}) \colon E_1(F_m^1) \times \cdots \times E_{n-1}(F_m^{n-1}) \times E_n(G_m) \to K_E(\yo)\big)\,.
\end{align*}

For each $\nu \in \mathbb{N}$ and each $1 \leq i \leq n$, we apply the above construction with $N=4\nu$ and $E=E_i$. Thus, on $\mathbb{R}_{4\nu}$ we consider the quasi-norm
\[
\|x\|_{\widetilde{E}_i} := \|(\ldots, 0, 0, x_{-4\nu}, \ldots, x_{4\nu}, 0, 0, \ldots)\|_{E_i}, \quad\, x=(x_k)_{k=-4\nu}^{4\nu} \in \mathbb{R}_{4\nu}\,.
\]
Let
\[
\eta := \bigg(\max_{1\leq i \leq n} \bigg\|\sum_{|k| \leq 4\nu} \frac{e_k}{\|e_k\|_{E_i}}\bigg\|_{E_i}\bigg)^{-1}.
\]
By \eqref{net}, there exists an $\eta$-net
\[
\Lambda_1 := \{\lambda^1, \ldots, \lambda^s\} \subset B_{(\mathbb{R}_{4\nu}, \|\cdot\|_{\widetilde{E}_1})}
\]
such that, given $x \in B_{(\mathbb{R}_{4\nu}, \|\cdot\|_{\widetilde{E}_1})}$, there exists $\lambda^d \in \Lambda_1$ with
\[
\|x - \lambda^d\|_{\widetilde{E}_1} \leq \eta\,.
\]
Similarly, for each $2 \leq i \leq n$, let
\[
\Lambda_i := \{\mu_i^1, \ldots, \mu_i^{r_i}\} \subset B_{(\mathbb{R}_{4\nu}, \|\cdot\|_{\widetilde{E}_i})}
\]
be an $\eta$-net for $B_{(\mathbb{R}_{4\nu}, \|\cdot\|_{\widetilde{E}_i})}$.

For each $\lambda^d = (\lambda_k^d)_{|k| \leq 4\nu}$, we define the positive numbers
\[
\varphi_k^j = \varphi_{k,\lambda^d}^j := \left( \frac{\eta}{\|e_k\|_{E_1}} + |\lambda_k^d| \right) 2^{-kj}, \quad\, j = 0, 1.
\]
Similarly, for each $\mu_i^z = (\mu_{i,k}^z)_{|k| \leq 4\nu} \in \Lambda_i$, with $2 \leq i \leq n$, we define the positive numbers
\[
\psi_{i,k}^j = \psi_{i,k,\mu_i^z}^j := \left( \frac{\eta}{\|e_k\|_{E_i}} + |\mu_{i,k}^z| \right) 2^{-kj}, \quad\, j = 0, 1.
\]

Now, let $\sigma_j > \widetilde{\beta}_j$, $j = 0,1$, and choose $N_0 \in \mathbb{Z}$ such that
\[
2^{(n-1)N_0} \leq \frac{\sigma_1}{\sigma_0} < 2^{(n-1)(N_0+1)}\,.
\]
Thus, there are finite sets $\Delta_0 = \{h_t\}_{t = 1}^{\nu_0} \subset Y^0$ and $\Delta_1 = \{f_r\}_{r = 1}^{\nu_1} \subset Y^1$ such that
\[
T(B_{X_1^0} \times \cdots \times B_{X_n^0}) \subset \bigcup_{t=1}^{\nu_0} (h_t + \sigma_0 B_{Y^0})
\]
and
\[
T(B_{X_1^1} \times \cdots \times B_{X_n^1}) \subset \bigcup_{r=1}^{\nu_1} (f_r + \sigma_1 B_{Y^1})\,.
\]

Take $\lambda^d \in \Lambda_1$, $\mu_i^z \in \Lambda_i$, $2 \leq i \leq n$, $h_t \in \Delta_0$, and $f_r \in \Delta_1$. Let
\[
-4\nu \leq m \leq 4\nu, \qquad -4\nu - N_0 \leq s_i \leq 4\nu - N_0, \quad\, 2 \leq i \leq n.
\]
Define the set
\begin{equation}\label{3.8}
S := \big(\varphi_m^0 \psi_{2,s_2+N_0}^0 \cdots \psi_{n,s_n+N_0}^0 (h_t + \sigma_0 B_{Y^0})\big)
\cap
\big(\varphi_m^1 \psi_{2,s_2+N_0}^1 \cdots \psi_{n,s_n+N_0}^1 (f_r + \sigma_1 B_{Y^1})\big)\,.
\end{equation}

Take $g_{m,s_2,\ldots,s_n} \in S$. If $S = \emptyset$, then set $g_{m,s_2,\ldots,s_n} := 0$. Define the family $(\overline{g}_{m,s_2,\ldots,s_n})$ by
\[
\overline{g}_{m,s_2,\ldots,s_n} := \begin{cases}
g_{m,s_2,\ldots,s_n}, & \text{if } |m| \leq 4\nu \,\, \text{and } \,\, |s_i+N_0| \leq 4\nu, \quad\, 2 \leq i \leq n, \\[1mm]
0, & \text{otherwise}.
\end{cases}
\]
Given $k \in \mathbb{Z}$, let
\[
\xi_k := \sum_{m=-\infty}^{\infty} \sum_{s_2=-\infty}^{\infty} \cdots \sum_{s_{n-1}=-\infty}^{\infty}
\overline{g}_{m,s_2,\ldots,s_{n-1},\,k-m-s_2-\cdots-s_{n-1}}\,.
\]
Since the family $(\overline{g}_{m,s_2,\ldots,s_n})$ has only finitely many nonzero elements, it follows that each $\xi_k$ is a finite sum in $Y^0 \cap Y^1$, and moreover $\xi_k = 0$ whenever $k \notin [-4n\nu -(n-1)N_0, 4n\nu -(n-1)N_0]$. Thus, letting
\[
\xi := \sum_{k=-\infty}^{\infty} \xi_k\,,
\]
we obtain $\xi \in Y^0 \cap Y^1 \subset J_E(\yo)$. For each choice of $\lambda^d \in \Lambda_1$, $\mu_i^z \in \Lambda_i$, $2 \leq i \leq n$, $h_t \in \Delta_0$, and $f_r \in \Delta_1$, we define in this way an element $\xi \in Y^0 \cap Y^1$.

Let $\Psi$ be the set of all elements $\xi \in Y^0 \cap Y^1$ obtained in this way. We note that $\Psi$ is finite, since the sets $\Lambda_1, \Lambda_i, \Delta_0$, and $\Delta_1$ are finite, and the elements of $\Lambda_1$ and $\Lambda_i$ are finite sequences.

Next, recall that $F_m^1 = (X_1^0 \cap X_1^1, J(2^m,\cdot,X_1^0,X_1^1))$ and 
\[
E_1(F_m^1) = \{u=(u_m)_m : \, (\|u_m\|_{F_m^1})_m \in E_1\},
\]
where
\[
\|u\|_{E_1(F_m^1)} = \|(\|u_m\|_{F_m^1})_m\|_{E_1}\,.
\]
Given $u = (u_m) \in E_1(F_m^1)$, define $\widetilde{u} = (\widetilde{u}_m)_m$ by
\[
\widetilde{u}_m := \begin{cases}
\|u_m\|_{F_m^1}, & \text{if } |m| \leq 4\nu\,, \\[1mm]
0, & \text{otherwise}.
\end{cases}
\]
Since $|\widetilde{u}_m| \leq \|u_m\|_{F_m^1}$, it follows that $\widetilde{u} \in E_1$ and
\[
\|\widetilde{u}\|_{E_1} \leq \|(\|u_m\|_{F_m^1})_m\|_{E_1}\,.
\]
In particular, we may regard $\widetilde{u}$ as an element of $(\mathbb{R}_{4\nu},\|\cdot\|_{\widetilde{E}_1})$. 
If $u = (u_m) \in B_{E_1(F_m^1)}$, then  $\|\widetilde{u}\|_{(\mathbb{R}_{4\nu},\|\cdot\|_{\widetilde{E}_1})} \leq 1$, that is,   
\[
\|(J(2^m,u_m))_{|m|\leq 4\nu}\|_{\widetilde{E}_1} \leq 1\,.
\]
Thus there exists $\lambda^d \in \Lambda_1$, where $\lambda^d = (\lambda_k^d)_{|k|\leq 4\nu}$, such that 
$\|\widetilde{u} - \lambda^d\|_{\widetilde{E}_1} \leq \eta$. This yields
\[
\|(J(2^m,u_m) - \lambda_m^d)_m\|_{\widetilde{E}_1} \leq \eta\,,
\]
or equivalently,
\[
\Big\|\sum_{|k| \leq 4\nu} (J(2^k,u_k) - \lambda_k^d)e_k\Big\|_{\widetilde{E}_1} \leq \eta\,.
\]
Hence, for $|k| \leq 4\nu$, we have
\[
|J(2^k,u_k) - \lambda_k^d| \, \|e_k\|_{E_1}
\leq
\|(J(2^m,u_m) - \lambda_m^d)_m\|_{\widetilde{E}_1}
\leq
\eta\,.
\]
Therefore, $|J(2^k,u_k) - \lambda_k^d| \leq \frac{\eta}{\|e_k\|_{E_1}}$, which implies that
\[
J(2^k,u_k) \leq \frac{\eta}{\|e_k\|_{E_1}} + |\lambda_k^d|\,.
\]

In the same way, given $v_i = (v_{i,m}) \in B_{E_i(F_m^i)}$, $2 \leq i \leq n-1$, and $v_n = (v_{n,m}) \in B_{E_n(G_m)}$, there exist $\mu_i^z 
= (\mu_{i,k}^z)_{|k|\leq 4\nu} \in \Lambda_i$, $2 \leq i \leq n$, such that
\[
|J(2^k,v_{i,k}) - \mu_{i,k}^z| \, \|e_k\|_{E_i} \leq \|(J(2^m,v_{i,m}) - \mu_{i,m}^z)_m\|_{\widetilde{E}_i} \leq \eta\,,
\]
and hence
\[
J(2^k,v_{i,k}) \leq \frac{\eta}{\|e_k\|_{E_i}} + |\mu_{i,k}^z|, \quad\, 2 \leq i \leq n\,.
\]

From the definition of $J$, we get that $\|u_k\|_{X_1^j} \leq \varphi_k^j$ and $\|v_{i,s_i+N_0}\|_{X_i^j} \leq \psi_{i,s_i+N_0}^j$, for $j = 0, 1$, $2 \leq i \leq n$, $|k| \leq 4\nu$. 
This implies that $u_k \in \varphi_k^0 B_{X_1^0} \cap \varphi_k^1 B_{X_1^1}$ and $v_{i,s_i+N_0} \in \psi_{i,s_i+N_0}^0 B_{X_i^0} \cap \psi_{i,s_i+N_0}^1 B_{X_i^1}$, for 
$-4\nu \leq k \leq 4\nu$, $-4\nu - N_0 \leq s_i \leq 4\nu - N_0$, $2 \leq i \leq n$. Then there exists $h_t \in \Delta_0$ such that
\[
T\big((\varphi_k^0)^{-1} u_k,(\psi_{2,s_2+N_0}^0)^{-1} v_{2,s_2+N_0},\ldots,(\psi_{n,s_n+N_0}^0)^{-1} v_{n,s_n+N_0}\big) \in h_t + \sigma_0 B_{Y^0}\,,
\]
which implies that
\[
T(u_k,v_{2,s_2+N_0},\ldots,v_{n,s_n+N_0}) \in \varphi_k^0 \psi_{2,s_2+N_0}^0 \cdots \psi_{n,s_n+N_0}^0 h_t + \varphi_k^0 \psi_{2,s_2+N_0}^0 \cdots \psi_{n,s_n+N_0}^0 \sigma_0 B_{Y^0}\,.
\]
Hence,
\begin{align*}
& J(2^k,T_k(u,v_2,\ldots,v_n) - \xi_k) \\
&= J\Big(2^k,\sum_{m=-\infty}^{\infty} \sum_{s_2=-\infty}^{\infty} \cdots \sum_{s_{n-1}=-\infty}^{\infty}
T(\overline{u}_m,\overline{v}_{2,s_2+N_0},\ldots,\overline{v}_{n-1,s_{n-1}+N_0},\overline{v}_{n,k-m-s_2-\cdots-s_{n-1}+N_0}) \\
&\quad\, - \sum_{m=-\infty}^{\infty} \sum_{s_2=-\infty}^{\infty} \cdots \sum_{s_{n-1}=-\infty}^{\infty}
\overline{g}_{m,s_2,\ldots,s_{n-1},\,k-m-s_2-\cdots-s_{n-1}}\Big) \\
&\leq \bigg(\sum_{m=-\infty}^{\infty} \sum_{s_2=-\infty}^{\infty} \cdots \sum_{s_{n-1}=-\infty}^{\infty}
\Big(J\big(2^k,T(\overline{u}_m,\overline{v}_{2,s_2+N_0},\ldots,\overline{v}_{n-1,s_{n-1}+N_0},\overline{v}_{n,k-m-s_2-\cdots-s_{n-1}+N_0}) \\
&\qquad\qquad\qquad\qquad\qquad\qquad
- \overline{g}_{m,s_2,\ldots,s_{n-1},\,k-m-s_2-\cdots-s_{n-1}}\big)\Big)^r \bigg)^{1/r} \\
&= \bigg(\sum_{m=-\infty}^{\infty} \sum_{s_2=-\infty}^{\infty} \cdots \sum_{s_{n-1}=-\infty}^{\infty}
\max \Big\{
\|T(\overline{u}_m,\overline{v}_{2,s_2+N_0},\ldots,\overline{v}_{n-1,s_{n-1}+N_0},\overline{v}_{n,k-m-s_2-\cdots-s_{n-1}+N_0}) \\
&\qquad\qquad\qquad\qquad\qquad\qquad
- \overline{g}_{m,s_2,\ldots,s_{n-1},\,k-m-s_2-\cdots-s_{n-1}}\|_{Y^0}, \\
&\qquad\qquad\qquad\qquad\qquad\qquad
2^k \|T(\overline{u}_m,\overline{v}_{2,s_2+N_0},\ldots,\overline{v}_{n-1,s_{n-1}+N_0},\overline{v}_{n,k-m-s_2-\cdots-s_{n-1}+N_0}) \\
&\qquad\qquad\qquad\qquad\qquad\qquad
- \overline{g}_{m,s_2,\ldots,s_{n-1},\,k-m-s_2-\cdots-s_{n-1}}\|_{Y^1}
\Big\}^r \bigg)^{1/r} \\
&\leq \bigg(\sum_{m=-\infty}^{\infty} \sum_{s_2=-\infty}^{\infty} \cdots \sum_{s_{n-1}=-\infty}^{\infty}
\max \Big\{
2 \varphi_m^0 \psi_{2,s_2+N_0}^0 \cdots \psi_{n-1,s_{n-1}+N_0}^0 \psi_{n,k-m-s_2-\cdots-s_{n-1}+N_0}^0 \sigma_0, \\
&\qquad\qquad\qquad\qquad\qquad
2 \cdot 2^k \varphi_m^1 \psi_{2,s_2+N_0}^1 \cdots \psi_{n-1,s_{n-1}+N_0}^1 \psi_{n,k-m-s_2-\cdots-s_{n-1}+N_0}^1 \sigma_1
\Big\}^r \bigg)^{1/r} \\
&=^{**} \bigg(\sum_{m=-\infty}^{\infty} \sum_{s_2=-\infty}^{\infty} \cdots \sum_{s_{n-1}=-\infty}^{\infty}
\max \Big\{
2 \varphi_m^0 \psi_{2,s_2+N_0}^0 \cdots \psi_{n-1,s_{n-1}+N_0}^0 \psi_{n,k-m-s_2-\cdots-s_{n-1}+N_0}^0 \sigma_0, \\
&\qquad\qquad\qquad\qquad\qquad
2 \cdot 2^k \cdot 2^{-m} \varphi_m^0 \cdot 2^{-s_2-N_0} \psi_{2,s_2+N_0}^0 \cdots 2^{-s_{n-1}-N_0} \psi_{n-1,s_{n-1}+N_0}^0 \\
&\qquad\qquad\qquad\qquad\qquad
\times 2^{-k+m+s_2+\cdots+s_{n-1}-N_0}
\psi_{n,k-m-s_2-\cdots-s_{n-1}+N_0}^0 \sigma_1
\Big\}^r \bigg)^{1/r} \\
&= \bigg(\sum_{m=-\infty}^{\infty} \sum_{s_2=-\infty}^{\infty} \cdots \sum_{s_{n-1}=-\infty}^{\infty}
\max \Big\{
2 \sigma_0 \varphi_m^0 \psi_{2,s_2+N_0}^0 \cdots \psi_{n-1,s_{n-1}+N_0}^0 \psi_{n,k-m-s_2-\cdots-s_{n-1}+N_0}^0, \\
&\qquad\qquad\qquad\qquad\qquad
2 \cdot 2^{-(n-1)N_0} \sigma_1
\varphi_m^0 \psi_{2,s_2+N_0}^0 \cdots \psi_{n-1,s_{n-1}+N_0}^0
\psi_{n,k-m-s_2-\cdots-s_{n-1}+N_0}^0
\Big\}^r \bigg)^{1/r} \\
&\leq \bigg(\sum_{m=-\infty}^{\infty} \sum_{s_2=-\infty}^{\infty} \cdots \sum_{s_{n-1}=-\infty}^{\infty}
\max \Big\{
2 \sigma_0 \varphi_m^0 \psi_{2,s_2+N_0}^0 \cdots \psi_{n-1,s_{n-1}+N_0}^0 \psi_{n,k-m-s_2-\cdots-s_{n-1}+N_0}^0, \\
&\qquad\qquad\qquad\qquad\qquad
2^n \sigma_0
\varphi_m^0 \psi_{2,s_2+N_0}^0 \cdots \psi_{n-1,s_{n-1}+N_0}^0
\psi_{n,k-m-s_2-\cdots-s_{n-1}+N_0}^0
\Big\}^r \bigg)^{1/r} \\
&\leq 2^n \sigma_0 \bigg(\sum_{m=-\infty}^{\infty} \sum_{s_2=-\infty}^{\infty} \cdots \sum_{s_{n-1}=-\infty}^{\infty}
\Big(\varphi_m^0 \psi_{2,s_2+N_0}^0 \cdots \psi_{n-1,s_{n-1}+N_0}^0
\psi_{n,k-m-s_2-\cdots-s_{n-1}+N_0}^0\Big)^r \bigg)^{1/r}.
\end{align*}
We remark that in $=^{**}$ above we used that $\varphi_m^1 = 2^{-m}\varphi_m^0$ and
\[
\psi_{i,s_i+N_0}^1 = 2^{-s_i-N_0}\psi_{i,s_i+N_0}^0, \quad\, 2 \leq i \leq n-1,
\]
as well as
\[
\psi_{n,k-m-s_2-\cdots-s_{n-1}+N_0}^1
=
2^{-k+m+s_2+\cdots+s_{n-1}-N_0}
\psi_{n,k-m-s_2-\cdots-s_{n-1}+N_0}^0\,.
\]
Here we have used that
\[
2^{-(n-1)N_0}\sigma_1 < 2^{n-1}\sigma_0\,.
\]
Since $\xi = \sum_{k=-\infty}^{\infty} \xi_k$, with $\xi_k \in Y^0 \cap Y^1$, and
\[
T(\pi_1 R_{1,4\nu} u,\pi_2 R_{2,4\nu} v_2,\ldots,\pi_{n-1} R_{n-1,4\nu} v_{n-1},\pi_n S_{4\nu} v_n)
=
\sum_k T_k(u,v_2,\ldots,v_n)\,,
\]
where $T_k(u,v_2,\ldots,v_n) \in Y^0 \cap Y^1$, we have
\[
T(\pi_1 R_{1,4\nu} u,\pi_2 R_{2,4\nu} v_2,\ldots,\pi_{n-1} R_{n-1,4\nu} v_{n-1},\pi_n S_{4\nu} v_n) - \xi
=
\sum_k \big(T_k(u,v_2,\ldots,v_n) - \xi_k\big)\,.
\]

\begin{align*}
& \|T(\pi_1 R_{1,4\nu} u, \pi_2 R_{2,4\nu} v_2,\ldots,\pi_{n-1} R_{n-1,4\nu} v_{n-1}, \pi_n S_{4\nu} v_n) - \xi\|_{J_E(\yo)} \\
&\leq \|(J(2^k,T_k(u,v_2,\ldots,v_n) - \xi_k))_k\|_E \\
&\leq 2^n \sigma_0 \Big\|\Big(\sum_{m=-\infty}^{\infty} \sum_{s_2=-\infty}^{\infty} \cdots \sum_{s_{n-1}=-\infty}^{\infty}
\big(\varphi_m^0 \psi_{2,s_2+N_0}^0 \cdots \psi_{n-1,s_{n-1}+N_0}^0 \psi_{n,k-m-s_2-\cdots-s_{n-1}+N_0}^0\big)^r\Big)^{1/r}\Big\|_E \\
&\leq 2^n \sigma_0 \gamma \|(\varphi_m^0)\|_{E_1}\|\tau_{N_0}((\psi_{2,s_2}^0))\|_{E_2}\cdots \|\tau_{N_0}((\psi_{n,s_n}^0))\|_{E_n}\,. \qquad (++)
\end{align*}

Now, since
\[
\varphi_k^0 = \frac{\eta}{\|e_k\|_{E_1}} + |\lambda_k^d|\,,
\]
where $\lambda^d = (\lambda_k^d) \in B_{(\mathbb{R}_{4\nu}, \|\cdot\|_{\widetilde{E}_1})}$, we have
\begin{align*}
\|(\varphi_m^0)\|_{E_1}
&=
\big\|(\eta /\|e_k\|_{E_1} + |\lambda_k^d|)_{|k| \leq 4\nu}\big\|_{E_1} \\
&\leq
c_{E_1} \Big(
\Big\|\sum_{|k|\leq 4\nu} \frac{\eta}{\|e_k\|_{E_1}} e_k \Big\|_{\widetilde{E}_1}
+
\Big\|\sum_{|k|\leq 4\nu} |\lambda_k^d| e_k \Big\|_{\widetilde{E}_1}
\Big) \\
&\leq
c_{E_1}(1+1)
=
2c_{E_1}\,.
\end{align*}
We also have, for $2 \leq i \leq n$, that
\[
\|(\psi_{i,s_i}^0)\|_{E_i} \leq 2c_{E_i}\,.
\]

Consequently, for each $2 \leq i \leq n$, we obtain
\begin{align*}
\|\tau_{N_0}((\psi_{i,s_i}^0))\|_{E_i}
&\leq
\|\tau_{N_0}\|_{E_i \to E_i}\|(\psi_{i,s_i}^0)\|_{E_i} \\
&\leq
2c_{E_i}\|\tau_{N_0}\|_{E_i \to E_i}\,.
\end{align*}
Since, for each $2 \leq i \leq n$,
\[
\|\tau_{N_0}\|_{E_i \to E_i}
=
\varphi_{E_i}(2^{N_0})
\leq
\varphi_{E_i}\bigg(\bigg(\frac{\sigma_1}{\sigma_0}\bigg)^{1/(n-1)}\bigg),
\]
we obtain
\begin{align*}
(++)
&\leq
2^n \sigma_0 M \, 2c_{E_1}\, 2c_{E_2} \cdots 2c_{E_n}
\, \varphi_{E_2}(2^{N_0}) \cdots \varphi_{E_n}(2^{N_0}) \\
&\leq
2^{2n} c_{E_1} c_{E_2} \cdots c_{E_n} M \sigma_0
\varphi_{E_2}\bigg(\bigg(\frac{\sigma_1}{\sigma_0}\bigg)^{1/(n-1)}\bigg)\cdots
\varphi_{E_n}\bigg(\bigg(\frac{\sigma_1}{\sigma_0}\bigg)^{1/(n-1)}\bigg) \\
&=
\widetilde{C}\,\sigma_0
\varphi_{E_2}\bigg(\bigg(\frac{\sigma_1}{\sigma_0}\bigg)^{1/(n-1)}\bigg)\cdots
\varphi_{E_n}\bigg(\bigg(\frac{\sigma_1}{\sigma_0}\bigg)^{1/(n-1)}\bigg)\,.
\end{align*}

Now, if we let 
\[
A_\nu := P_{(n+1)\nu} \widehat{T}(R_{1,4\nu},R_{2,4\nu},\ldots,R_{n-1,4\nu},S_{4\nu})\,.  
\]
Then, we obtain.
\begin{align*}
\widetilde{\beta}\bigg(A_\nu \colon \bigg(\prod_{i=1}^{n-1} E_i(F_m^i)\bigg) & \times E_n(G_m) \to E(W_m)\bigg) \\
&\leq \widetilde{C}(1+\varepsilon)\,\widetilde{\beta}_0 
\varphi_{E_2}\bigg(\bigg(\frac{\widetilde{\beta}_1}{\widetilde{\beta}_0}\bigg)^{1/(n-1)}\bigg)\cdots
\varphi_{E_n}\bigg(\bigg(\frac{\widetilde{\beta}_1}{\widetilde{\beta}_0}\bigg)^{1/(n-1)}\bigg)\,.
\end{align*}
Since $\varepsilon > 0$ is arbitrary, it follows that
\begin{align*}
\widetilde{\beta}\bigg(A_\nu \colon \bigg(\prod_{i=1}^{n-1} E_i(F_m^i)\bigg) & \times E_n(G_m) \to E(W_m)\bigg) \\
&\leq \widetilde{C}\,\widetilde{\beta}_0  \varphi_{E_2}\bigg(\bigg(\frac{\widetilde{\beta}_1}{\widetilde{\beta}_0}\bigg)^{1/(n-1)}\bigg)\cdots
\varphi_{E_n}\bigg(\bigg(\frac{\widetilde{\beta}_1}{\widetilde{\beta}_0}\bigg)^{1/(n-1)}\bigg)\,.
\end{align*}

{\bf Step 3}. It remains to estimate the remaining terms in the decomposition. We treat them by combining endpoint bounds with Lemma \ref{lemma1} and Theorem \ref{conv}.

As
\[
T\colon (X_1^0 + X_1^1) \times (X_2^0 + X_2^1) \times \cdots \times (X_n^0 + X_n^1) \rightarrow Y^0 + Y^1
\]
is bounded, it follows that
\[
T\colon X_1^{j_1} \times X_2^{j_2} \times \cdots \times X_n^{j_n} \rightarrow Y^0 + Y^1
\]
is also bounded, where $j_i = 0$ or $j_i = 1$, for $1 \leq i \leq n$.
{\bf a)} Let
\[
T_{1,\nu} := P_{(n+1)\nu}\widehat{T}(R_{1,4\nu},\ldots,R_{n-1,4\nu},S_{4\nu}^+).
\]
Consider the diagram
\[
\begin{tikzcd}
\ell_p(F_m^1) \times \cdots \times \ell_p(F_m^{n-1}) \times \ell_p(G_m) \arrow{r}{T_{1,\nu}} \arrow[swap]{d}{(R_{1,4\nu},\ldots,R_{n-1,4\nu},S_{4\nu}^+)} & \ell_\infty(W_m) \\
\ell_p(F_m^1) \times \cdots \times \ell_p(F_m^{n-1}) \times \ell_p(2^{-m}G_m) \arrow{r}{\widehat{T}} & \ell_\infty(W_m) + \ell_\infty(2^{-m}W_m) \arrow[swap]{u}{P_{(n+1)\nu}}
\end{tikzcd}
\]
From \eqref{eq1}, we have
\[
\|R_{i,\nu}\|_{L(\ell_p(F_m^i) + \ell_p(2^{-m} F_m^i), \ell_p(F_m^i) \cap \ell_p(2^{-m} F_m^i))} \leq c_{\overrightarrow{X}_i} 2^\nu\,,
\]
and from \eqref{eq2},
\[
\|R_\nu^+\|_{L(\ell_p(F_m^i), \ell_p(2^{-m} F_m^i))} \leq 2^{-(\nu+1)}, \qquad \|R_\nu^-\|_{L(\ell_p(2^{-m} F_m^i), \ell_p(F_m^i))} \leq 2^{-(\nu+1)}.
\]
Moreover,
\[
\|S_{4\nu} + S_{4\nu}^+\|_{L(\ell_p(G_m), \ell_p(G_m))} \leq 1, \qquad \|S_{4\nu} + S_{4\nu}^-\|_{L(\ell_p(2^{-m}G_m), \ell_p(2^{-m} G_m))} \leq 1,
\]
and
\[
\|P_{(n+1)\nu}\|_{L(\ell_\infty(W_m) + \ell_\infty(2^{-m} W_m),\ell_\infty(W_m))} \leq c_{\overrightarrow{Y}} 2^{3\nu}.
\]
Thus,
\begin{align*}
& \|P_{(n+1)\nu}\widehat{T}(R_{1,4\nu},\ldots,R_{n-1,4\nu},S_{4\nu}^+)\|_{L(\ell_p(F_m^1),\ldots,\ell_p(F_m^{n-1}),\ell_p(G_m);\ell_\infty(W_m))} \\
& \leq \|P_{(n+1)\nu}\|_{L(\ell_\infty(W_m) + \ell_\infty(2^{-m}W_m),\ell_\infty(W_m))} \\
& \quad\, \times \|\widehat{T}\|_{L(\ell_p(F_m^1),\ldots,\ell_p(F_m^{n-1}),\ell_p(2^{-m}G_m);\ell_\infty(W_m) + \ell_\infty(2^{-m}W_m))} \\
& \quad\, \times \|R_{1,4\nu}\|_{L(\ell_p(F_m^1),\ell_p(F_m^1))} \cdots \|R_{n-1,4\nu}\|_{L(\ell_p(F_m^{n-1}),\ell_p(F_m^{n-1}))} \|S_{4\nu}^+\|_{L(\ell_p(G_m),\ell_p(2^{-m} G_m))} \\
& \leq c_{\overrightarrow{Y}} 2^{3\nu} \|\widehat{T}\|_{L(\ell_p(F_m^1),\ldots,\ell_p(F_m^{n-1}),\ell_p(2^{-m}G_m);\ell_\infty(W_m) + \ell_\infty(2^{-m}W_m))} \cdot 1 \cdots 1 \cdot 2^{-(4\nu+1)} \\
& \leq c_{\overrightarrow{Y}} 2^{-\nu} \|\widehat{T}\|_{L(\ell_p(F_m^1),\ldots,\ell_p(F_m^{n-1}),\ell_p(2^{-m}G_m);\ell_\infty(W_m) + \ell_\infty(2^{-m}W_m))}\,.
\end{align*}

Now consider the diagram
\begin{align*}
\ell_p(F_m^1) \times \cdots \times \ell_p(F_m^{n-1}) \times \ell_p(2^{-m}G_m) & \stackrel{(\pi_1,\ldots,\pi_n)}{\longrightarrow} X_1^0 \times \cdots \times X_{n-1}^0 \times X_n^1 \\
& \stackrel{T}{\longrightarrow} Y^0 + Y^1 \stackrel{Q}{\longrightarrow} \ell_\infty(W_m) + \ell_\infty(2^{-m}W_m)\,.
\end{align*}
Since
\begin{align*}
& \|\widehat{T}\|_{L(\ell_p(F_m^1), \ldots, \ell_p(F_m^{n-1}), \ell_p(2^{-m}G_m); \ell_\infty(W_m) + \ell_\infty(2^{-m}W_m))} \\
& \leq \|(\pi_1,\ldots,\pi_n)\| \|T\|_{L(X_1^0, \ldots, X_{n-1}^0, X_n^1; Y^0 + Y^1)} \|Q\| \\
& \leq \|T\|_{L(X_1^0, \ldots, X_{n-1}^0, X_n^1; Y^0 + Y^1)}\,,
\end{align*}
we get the estimate
\begin{align*}
& \|P_{(n+1)\nu}\widehat{T}(R_{1,4\nu}, \ldots, R_{n-1,4\nu}, S_{4\nu}^{+})\|_{L(\ell_p(F_m^1), \ldots, \ell_p(F_m^{n-1}), \ell_p(G_m); \ell_\infty(W_m))} \\
&\leq c_{\overrightarrow{Y}} 2^{-\nu} \|T\|_{L(X_1^0, \ldots, X_{n-1}^0, X_n^1; Y^0 + Y^1)}
\rightarrow 0 \quad \text{as \, $\nu \to \infty$}\,.
\end{align*}
Combining this estimate with
\[
\|P_{(n+1)\nu}\widehat{T}(R_{1,4\nu}, \ldots, R_{n-1,4\nu}, S_{4\nu}^+)\|_{L(\ell_p(2^{-m}F_m^1), \ldots, \ell_p(2^{-m}F_m^{n-1}), \ell_p(2^{-m}G_m); \ell_\infty(2^{-m}W_m))}
\leq \|T\|_1\,,
\]
we define
\[
a_\nu := c_{\overrightarrow{Y}} 2^{-\nu} \|T\|_{L(X_1^0, \ldots, X_{n-1}^0, X_n^1; Y^0 + Y^1)}.
\]
Then $T_{1,\nu}$ is an $n$-linear operator from
\[
\vec F_1 \times \cdots \times \vec F_{n-1} \times \vec G
\quad \text{to} \quad
\vec W_\infty,
\]
where
\[
\vec F_i := (\ell_p(F_m^i), \ell_p(2^{-m}F_m^i)), \quad 1 \leq i \leq n-1,
\]
\[
\vec G := (\ell_p(G_m), \ell_p(2^{-m}G_m)),
\qquad
\vec W_\infty := (\ell_\infty(W_m), \ell_\infty(2^{-m}W_m)).
\]
Moreover,
\[
\|T_{1,\nu}\|_0 \leq a_\nu,
\qquad
\|T_{1,\nu}\|_1 \leq \|T\|_1.
\]
Hence, by Lemma \ref{lemma1} and Theorem \ref{conv},
\begin{align*}
& \widetilde{\beta}\bigg(T_{1,\nu} \colon \bigg(\prod_{i=1}^{n-1} E_i(F_m^i)\bigg) \times E_n(G_m) \to E(W_m)\bigg) \\
&\leq \|T_{1,\nu}\|_{L((\prod_{i=1}^{n-1} E_i(F_m^i)) \times E_n(G_m); E(W_m))} \\
&\leq C a_\nu \,
\varphi_{E_2}\bigg(\bigg(\frac{\|T\|_1}{a_\nu}\bigg)^{1/(n-1)}\bigg)\cdots
\varphi_{E_n}\bigg(\bigg(\frac{\|T\|_1}{a_\nu}\bigg)^{1/(n-1)}\bigg).
\end{align*}
If $\|T\|_1 = 0$, then the conclusion is immediate. Assume that $\|T\|_1 > 0$ and set
\[
t_\nu := \bigg(\frac{\|T\|_1}{a_\nu}\bigg)^{1/(n-1)}.
\]
Then $t_\nu \to \infty$ as $\nu \to \infty$, and
\[
a_\nu \,
\varphi_{E_2}(t_\nu)\cdots \varphi_{E_n}(t_\nu)
=
\|T\|_1
\frac{\varphi_{E_2}(t_\nu)}{t_\nu}\cdots
\frac{\varphi_{E_n}(t_\nu)}{t_\nu}.
\]
Since each $E_i$, $2 \leq i \leq n$, satisfies condition $(2)$, the right-hand side tends to $0$ as $\nu \to \infty$. Consequently,
\[
\widetilde{\beta}\bigg(T_{1,\nu} \colon \bigg(\prod_{i=1}^{n-1} E_i(F_m^i)\bigg) \times E_n(G_m) \to E(W_m)\bigg)
\to 0 \quad \text{as \, $\nu \to \infty$}\,.
\]
{\bf b)} For
\[
T_{2,\nu} := P_{(n+1)\nu}\widehat{T}(R_{1,4\nu},\ldots,R_{n-1,4\nu},S_{4\nu}^-),
\]
we consider the diagram
\[
\begin{tikzcd}
\ell_p(2^{-m} F_m^1) \times \cdots \times \ell_p(2^{-m}F_m^{n-1}) \times \ell_p(2^{-m} G_m) \arrow{r}{T_{2,\nu}} \arrow[swap]{d}{(R_{1,4\nu},\ldots,R_{n-1,4\nu},S_{4\nu}^-)} & \ell_\infty(2^{-m} W_m) \\
\ell_p(2^{-m} F_m^1) \times \cdots \times \ell_p(2^{-m} F_m^{n-1}) \times \ell_p(G_m) \arrow{r}{\widehat{T}} & \ell_\infty(W_m) + \ell_\infty(2^{-m}W_m) \arrow[swap]{u}{P_{(n+1)\nu}}
\end{tikzcd}
\]
In the same way as in {\bf Step 3(a)}, we obtain
\[
\widetilde{\beta}\bigg(T_{2,\nu} \colon \bigg(\prod_{i=1}^{n-1} E_i(F_m^i)\bigg) \times E_n(G_m) \to E(W_m)\bigg)
\to 0 \quad \text{as \, $\nu \to \infty$}\,.
\]

\textbf{Step 4}. Here we consider case {\rm(ii)} of the decomposition \eqref{dec1}, that is, the terms in which, in the first $n-1$ variables, 
all operators are of the form $R_{i,4\nu}^{+}$.
\smallskip

\textbf{a)} For
\[
T_{3,\nu} := P_{(n+1)\nu}\widehat{T}(R_{1,4\nu}^{+}, \ldots, R_{n-1,4\nu}^{+}, S_{4\nu} + S_{4\nu}^{+}),
\]
we consider the diagram
\[
\begin{tikzcd}
\ell_p(F_m^1) \times \ell_p(F_m^2) \times \cdots \times \ell_p(F_m^{n-1}) \times \ell_p(G_m)
\arrow{r}{T_{3,\nu}}
\arrow[swap]{d}{(R_{1,4\nu}^{+}, R_{2,4\nu}^{+}, \ldots, R_{n-1,4\nu}^{+}, S_{4\nu} + S_{4\nu}^{+})}
& \ell_\infty(W_m) \\
\ell_p(2^{-m} F_m^1) \times \ell_p(2^{-m} F_m^2) \times \cdots \times \ell_p(2^{-m} F_m^{n-1}) \times \ell_p(G_m)
\arrow{r}{\widehat{T}}
& \ell_\infty(W_m) + \ell_\infty(2^{-m}W_m)
\arrow[swap]{u}{P_{(n+1)\nu}}
\end{tikzcd}
\]
In the same way as in {\bf Step 3(a)}, we obtain
\[
\widetilde{\beta}\bigg(T_{3,\nu} \colon \bigg(\prod_{i=1}^{n-1} E_i(F_m^i)\bigg) \times E_n(G_m) \to E(W_m)\bigg)
\to 0
\quad \text{as \, $\nu \to \infty$}\,.
\]

\smallskip

\textbf{b)} For
\[
T_{4,\nu} := P_{(n+1)\nu}\widehat{T}(R_{1,4\nu}^{+}, \ldots, R_{n-1,4\nu}^{+}, S_{4\nu}^{-}),
\]
we consider the diagram
\[
\begin{tikzcd}
\ell_p(F_m^1) \times \ell_p(F_m^2) \times \cdots \times \ell_p(F_m^{n-1}) \times \ell_p(2^{-m} G_m)
\arrow{r}{T_{4,\nu}}
\arrow[swap]{d}{(R_{1,4\nu}^{+}, R_{2,4\nu}^{+}, \ldots, R_{n-1,4\nu}^{+}, S_{4\nu}^{-})}
& \ell_\infty(2^{-m}W_m) \\
\ell_p(2^{-m} F_m^1) \times \ell_p(2^{-m} F_m^2) \times \cdots \times \ell_p(2^{-m} F_m^{n-1}) \times \ell_p(G_m)
\arrow{r}{\widehat{T}}
& \ell_\infty(W_m) + \ell_\infty(2^{-m}W_m)
\arrow[swap]{u}{P_{(n+1)\nu}}
\end{tikzcd}
\]
In the same way as in {\bf Step 3(b)}, we obtain
\[
\widetilde{\beta}\bigg(T_{4,\nu} \colon \bigg(\prod_{i=1}^{n-1} E_i(F_m^i)\bigg) \times E_n(G_m) \to E(W_m)\bigg)
\to 0
\quad \text{as \, $\nu \to \infty$}\,.
\]

\medskip

{\bf Step 5}. Here we consider case {\rm(iii)} of the decomposition, where all operators are of the form $R_{i,4\nu}^{-}$. In this case, we argue exactly as in 
{\bf Step 4(a)} and {\bf Step 4(b)}, with $R_{i,4\nu}^{+}$ replaced by $R_{i,4\nu}^{-}$. Consequently, all corresponding terms tend to zero as $\nu \to \infty$.

\medskip

{\bf Step 6}. Here we consider case {\rm(iv)} of the decomposition, where at least one operator is of the form $R_{i,4\nu}$ and at 
least one further operator is of the form $R_{j,4\nu}^{+}$ and/or $R_{k,4\nu}^{-}$. Without loss of generality, we consider the representative term
\[
P_{(n+1)\nu}\widehat{T}(R_{1,4\nu}, R_{2,4\nu}^{+}, \ldots, S_{4\nu} + S_{4\nu}^{+} + S_{4\nu}^{-})\,.
\]
We decompose it as
\begin{align*}
P_{(n+1)\nu}\widehat{T}(R_{1,4\nu}, R_{2,4\nu}^{+}, \ldots, S_{4\nu} + S_{4\nu}^{+} + S_{4\nu}^{-})
&= P_{(n+1)\nu}\widehat{T}(R_{1,4\nu}, R_{2,4\nu}^{+}, \ldots, S_{4\nu} + S_{4\nu}^{+}) \\
&\quad + P_{(n+1)\nu}\widehat{T}(R_{1,4\nu}, R_{2,4\nu}^{+}, \ldots, S_{4\nu}^{-})\,.
\end{align*}
Set
\[
T_{5,\nu} := P_{(n+1)\nu}\widehat{T}(R_{1,4\nu}, R_{2,4\nu}^{+}, \ldots, S_{4\nu} + S_{4\nu}^{+})
\]
and
\[
T_{6,\nu} := P_{(n+1)\nu}\widehat{T}(R_{1,4\nu}, R_{2,4\nu}^{+}, \ldots, S_{4\nu}^{-})\,.
\]
The corresponding factorizations can be represented by diagrams analogous to those used in Steps 3--5, and we therefore omit them.
Then $T_{5,\nu}$ is treated exactly as in {\bf Step 3(a)} and {\bf Step 4(a)}, while $T_{6,\nu}$ is treated exactly as in {\bf Step 3(b)} and {\bf Step 4(b)}.
Consequently,
\[
\widetilde{\beta}\bigg(T_{5,\nu} \colon \bigg(\prod_{i=1}^{n-1} E_i(F_m^i)\bigg) \times E_n(G_m) \to E(W_m)\bigg)
\to 0
\quad \text{as \, $\nu \to \infty$},
\]
and
\[
\widetilde{\beta}\bigg(T_{6,\nu} \colon \bigg(\prod_{i=1}^{n-1} E_i(F_m^i)\bigg) \times E_n(G_m) \to E(W_m)\bigg)
\to 0
\quad \text{as \, $\nu \to \infty$}.
\]
The same argument applies to any term occurring in case {\rm(iv)}. Therefore, all terms in case {\rm(iv)} tend to zero as $\nu \to \infty$.

\medskip

{\bf Step 7}. Here we consider case {\rm(v)} of the decomposition, where we have only operators of the form $R_i^{+}$ and $R_j^{-}$, with 
at least one operator of each type. We argue as in {\bf Step 6}. Consequently, all terms in case {\rm(v)} tend to zero as $\nu \to \infty$.

\medskip

It remains to analyze the last two operators in the decomposition.

\medskip

{\bf Step 8}. For $P_{(n+1)\nu}^{+}\widehat{T}$, we consider the decomposition
\begin{align*}
P_{(n+1)\nu}^{+}\widehat{T}
&= P_{(n+1)\nu}^{+}\widehat{T}(I,I,\ldots,I) \\
&= P_{(n+1)\nu}^{+}\widehat{T}((R_{1,\nu}+R_{1,\nu}^{-})+R_{1,\nu}^{+}, \ldots, (R_{n-1,\nu}+R_{n-1,\nu}^{-})+R_{n-1,\nu}^{+}, I) \\
&= P_{(n+1)\nu}^{+}\widehat{T}(R_{1,\nu}+R_{1,\nu}^{-}, \ldots, R_{n-1,\nu}+R_{n-1,\nu}^{-}, I) \\
&\quad + P_{(n+1)\nu}^{+}\widehat{T}(R_{1,\nu}+R_{1,\nu}^{-}, R_{2,\nu}+R_{2,\nu}^{-}, \ldots, R_{n-1,\nu}^{+}, I) \\
&\quad + \cdots \\
&\quad + P_{(n+1)\nu}^{+}\widehat{T}(R_{1,\nu}^{+}, R_{2,\nu}^{+}, \ldots, R_{n-1,\nu}^{+}, I)\,,
\end{align*}
where we have a total of $2^{n-1}$ terms.

{\bf a)} We write
\begin{align*}
& P_{(n+1)\nu}^{+}\widehat{T}(R_{1,\nu}+R_{1,\nu}^{-}, \ldots, R_{n-1,\nu}+R_{n-1,\nu}^{-}, I) \\
&= P_{(n+1)\nu}^{+}\widehat{T}(R_{1,\nu}+R_{1,\nu}^{-}, \ldots, R_{n-1,\nu}+R_{n-1,\nu}^{-}, S_{\nu}+S_{\nu}^{-}) \\
&\quad + P_{(n+1)\nu}^{+}\widehat{T}(R_{1,\nu}+R_{1,\nu}^{-}, \ldots, R_{n-1,\nu}+R_{n-1,\nu}^{-}, S_{\nu}^{+})\,.
\end{align*}
Set
\[
T_{7,\nu}:=P_{(n+1)\nu}^{+}\widehat{T}(R_{1,\nu}+R_{1,\nu}^{-}, \ldots, R_{n-1,\nu}+R_{n-1,\nu}^{-}, S_{\nu}+S_{\nu}^{-}).
\]
Since
\[
R_{\nu}+R_{\nu}^{-}\colon \ell_p(2^{-m}F_m)\to \ell_p(F_m)
\quad\text{and}\quad
\|R_{\nu}+R_{\nu}^{-}\|_{L(\ell_p(2^{-m}F_m),\ell_p(F_m))}\le 2^{\nu},
\]
consider the diagram
\[
\begin{tikzcd}
\ell_p(2^{-m} F_m^1) \times \cdots \times \ell_p(2^{-m}F_m^{n-1}) \times \ell_p(2^{-m}G_m)
\arrow{r}{T_{7,\nu}}
\arrow[swap]{d}{(R_{1,\nu}+R_{1,\nu}^{-}, \ldots, R_{n-1,\nu}+R_{n-1,\nu}^{-}, S_{\nu}+S_{\nu}^{-})}
& \ell_\infty(2^{-m}W_m) \\
\ell_p(F_m^1) \times \cdots \times \ell_p(F_m^{n-1}) \times \ell_p(G_m)
\arrow{r}{\widehat{T}}
& \ell_\infty(W_m) \arrow[swap]{u}{P_{(n+1)\nu}^{+}}\,.
\end{tikzcd}
\]
Then
\begin{align*}
& \|T_{7,\nu}\|_{L(\ell_p(2^{-m} F_m^1), \ldots, \ell_p(2^{-m}F_m^{n-1}), \ell_p(2^{-m}G_m); \ell_\infty(2^{-m}W_m))} \\
&\leq \|P_{(n+1)\nu}^{+}\|_{L(\ell_\infty(W_m), \ell_\infty(2^{-m}W_m))}
\|\widehat{T}\|_{L(\ell_p(F_m^1), \ldots, \ell_p(F_m^{n-1}), \ell_p(G_m); \ell_\infty(W_m))} \\
&\quad \times \|R_{1,\nu}+R_{1,\nu}^{-}\|_{L(\ell_p(2^{-m}F_m^1), \ell_p(F_m^1))}\cdots
\|R_{n-1,\nu}+R_{n-1,\nu}^{-}\|_{L(\ell_p(2^{-m}F_m^{n-1}), \ell_p(F_m^{n-1}))} \\
&\quad \times \|S_{\nu}+S_{\nu}^{-}\|_{L(\ell_p(2^{-m}G_m), \ell_p(G_m))} \\
&\leq 2^{-(n+1)\nu}\|T\|_0\,2^{n\nu}
=2^{-\nu}\|T\|_0 \to 0
\quad \text{as \, $\nu\to\infty$}\,.
\end{align*}
Moreover,
\[
\|T_{7,\nu}\|_{L(\ell_p(F_m^1), \ldots, \ell_p(F_m^{n-1}), \ell_p(G_m); \ell_\infty(W_m))} \leq \|T\|_0.
\]
Therefore, arguing as in {\bf Step 3(a)}, we conclude that
\[
\widetilde{\beta}\bigg(T_{7,\nu} \colon \bigg(\prod_{i=1}^{n-1} E_i(F_m^i)\bigg) \times E_n(G_m) \to E(W_m)\bigg)
\to 0
\quad \text{as \, $\nu\to\infty$}\,.
\]

{\bf b)} Now consider operator
\[
T_{8,\nu}:=P_{(n+1)\nu}^{+}\widehat{T}(R_{1,\nu}+R_{1,\nu}^{-},\ldots,R_{n-1,\nu}+R_{n-1,\nu}^{-},S_{\nu}^{+}).
\]
For $j=0,1$, let $f_j^i$ denote the set of all finitely supported sequences in $\ell_p(2^{-mj}F_m^i)$, $1 \leq i \leq n-1$, 
and let $g_j$ denote the set of all finitely supported sequences in $\ell_p(2^{-mj}G_m)$.

Since
\[
T\colon (X_1^0+X_1^1)\times \cdots \times (X_n^0+X_n^1)\to Y^0+Y^1
\]
is bounded, it follows that
\[
T\colon X_1^{j_1}\times \cdots \times X_n^{j_n}\to Y^0+Y^1
\]
is bounded for all $j_1,\ldots,j_n\in\{0,1\}$. Hence
\begin{align*}
& \|T(\pi_1(R_{1,\nu}+R_{1,\nu}^{-}),\ldots,\pi_{n-1}(R_{n-1,\nu}+R_{n-1,\nu}^{-}),\pi_n S_{\nu}^{+})\|_{L(\ell_p(F_m^1),\ldots,\ell_p(F_m^{n-1}),\ell_p(G_m);Y^0+Y^1)} \\
&\leq \|T\|_{L(X_1^0\times \cdots \times X_{n-1}^0\times X_n^1;Y^0+Y^1)}
\|(\pi_1,\ldots,\pi_n)\|
\|R_{1,\nu}+R_{1,\nu}^{-}\| \cdots \|R_{n-1,\nu}+R_{n-1,\nu}^{-}\|
\|S_{\nu}^{+}\| \\
&\leq 2^{-\nu}\|T\|_{L(X_1^0\times \cdots \times X_{n-1}^0\times X_n^1;Y^0+Y^1)}
\to 0
\quad\, \text{as \, $\nu\to\infty$}\,.
\end{align*}

Set
\[
\|T_{8,\nu}\|_0:=\|T_{8,\nu}\|_{L(\ell_p(F_m^1),\ldots,\ell_p(F_m^{n-1}),\ell_p(G_m);\ell_\infty(W_m))}
\]
and
\[
\|T_{8,\nu}\|_1:=\|T_{8,\nu}\|_{L(\ell_p(2^{-m}F_m^1),\ldots,\ell_p(2^{-m}F_m^{n-1}),\ell_p(2^{-m}G_m);\ell_\infty(2^{-m}W_m))}\,.
\]

Applying Lemma \ref{lemma2} to the operator
\[
T(\pi_1(R_{1,\nu}+R_{1,\nu}^{-}),\ldots,\pi_{n-1}(R_{n-1,\nu}+R_{n-1,\nu}^{-}),\pi_n S_{\nu}^{+}),
\]
and using the boundedness of $Q$ and $P_{(n+1)\nu}^{+}$, we obtain the following. If $\widetilde{\beta}_0>0$, then there 
exist a constant $C_1>0$, independent of $T$, and a subsequence $(\nu')$ of $(\nu)_{\nu\ge1}$ such that
\[
\limsup_{\nu'\to\infty}\|T_{8,\nu'}\|_0 \le C_1\widetilde{\beta}_0\,.
\]
If $\widetilde{\beta}_0=0$, then, passing to a subsequence if necessary, we have
\[
\|T_{8,\nu'}\|_0 \to 0
\quad \text{as \, $\nu'\to\infty$}\,.
\]

Now let $u_i\in f_1^i$, $1\le i\le n-1$, and $v\in g_1$. Then $u_i\in f_0^i\cap f_1^i$ and $v\in g_0\cap g_1$. Hence
\begin{align*}
& \|T_{8,\nu'}(u_1,\ldots,u_{n-1},v)\|_{\ell_\infty(2^{-m}W_m)} \\
&\leq \|P_{(n+1)\nu'}^{+}\|_{L(\ell_\infty(W_m),\ell_\infty(2^{-m}W_m))}
\|\widehat{T}(u_1,\ldots,u_{n-1},v)\|_{\ell_\infty(W_m)} \\
&\leq 2^{-(n+1)\nu'}\|\widehat{T}(u_1,\ldots,u_{n-1},v)\|_{\ell_\infty(W_m)}
\to 0
\quad \text{as \, $\nu'\to\infty$}.
\end{align*}
Moreover,
\[
\|T_{8,\nu'}\|_1 \le \|P_{(n+1)\nu'}^{+}\widehat{T}\|_1.
\]
Therefore, if $\widetilde{\beta}_1>0$, then by Lemma \ref{lemma}, passing to a further subsequence if necessary, we have
\[
\limsup_{\nu'\to\infty}\|T_{8,\nu'}\|_1 \le C_2\widetilde{\beta}_1
\]
for some constant $C_2>0$ independent of $T$. If $\widetilde{\beta}_1=0$, then, passing to a subsequence if necessary, we have
\[
\|T_{8,\nu'}\|_1 \to 0 \quad\, \text{as \, $\nu'\to\infty$}.
\]

Passing to a common subsequence if necessary, again denoted by $(\nu')$, we obtain the two endpoint estimates above simultaneously.

If $\widetilde{\beta}_0=0$ or $\widetilde{\beta}_1=0$, then, arguing as in {\bf Step 8(a)}, we obtain
\[
\widetilde{\beta}\bigg(T_{8,\nu'} \colon \bigg(\prod_{i=1}^{n-1}E_i(F_m^i)\bigg)\times E_n(G_m)\to E(W_m)\bigg)\to 0
\quad \text{as \, $\nu'\to\infty$}\,.
\]

Assume now that $\widetilde{\beta}_0>0$ and $\widetilde{\beta}_1>0$. Fix $\varepsilon>0$. Then, for all sufficiently large $\nu'$,
\[
\|T_{8,\nu'}\|_0 \le (C_1+\varepsilon)\widetilde{\beta}_0\,,
\qquad
\|T_{8,\nu'}\|_1 \le (C_2+\varepsilon)\widetilde{\beta}_1\,.
\]
Hence, by Lemma \ref{lemma1} and Theorem \ref{conv},
\begin{align*}
& \widetilde{\beta}\bigg(T_{8,\nu'} \colon \bigg(\prod_{i=1}^{n-1}E_i(F_m^i)\bigg)\times E_n(G_m)\to E(W_m)\bigg) 
\leq \|T_{8,\nu'}\|_{L((\prod_{i=1}^{n-1}E_i(F_m^i))\times E_n(G_m);E(W_m))} \\
&\leq C (C_1+\varepsilon)\widetilde{\beta}_0
\varphi_{E_2}\bigg(\bigg(\frac{(C_2+\varepsilon)\widetilde{\beta}_1}{(C_1+\varepsilon)\widetilde{\beta}_0}\bigg)^{1/(n-1)}\bigg)\cdots
\varphi_{E_n}\bigg(\bigg(\frac{(C_2+\varepsilon)\widetilde{\beta}_1}{(C_1+\varepsilon)\widetilde{\beta}_0}\bigg)^{1/(n-1)}\bigg)\,.
\end{align*}
Using property $\ppp$ of the functions $\varphi_{E_i}$, $2\le i\le n$, we may absorb the constants $C_1+\varepsilon$ and $C_2+\varepsilon$ into $C$. Therefore,
\begin{align*}
\widetilde{\beta}\bigg(T_{8,\nu'} \colon \bigg(\prod_{i=1}^{n-1}E_i(F_m^i)\bigg) & \times E_n(G_m)\to E(W_m)\bigg) \\
& \le C\,\widetilde{\beta}_0
\varphi_{E_2}\bigg(\bigg(\frac{\widetilde{\beta}_1}{\widetilde{\beta}_0}\bigg)^{1/(n-1)}\bigg)\cdots
\varphi_{E_n}\bigg(\bigg(\frac{\widetilde{\beta}_1}{\widetilde{\beta}_0}\bigg)^{1/(n-1)}\bigg)
\end{align*}
for all sufficiently large $\nu'$. Consequently,
\begin{align*}
\limsup_{\nu'\to\infty}
\widetilde{\beta}\bigg(T_{8,\nu'} \colon \bigg(\prod_{i=1}^{n-1}E_i(F_m^i)\bigg) & \times E_n(G_m)\to E(W_m)\bigg) \\
& \le C\,\widetilde{\beta}_0
\varphi_{E_2}\bigg(\bigg(\frac{\widetilde{\beta}_1}{\widetilde{\beta}_0}\bigg)^{1/(n-1)}\bigg)\cdots
\varphi_{E_n}\bigg(\bigg(\frac{\widetilde{\beta}_1}{\widetilde{\beta}_0}\bigg)^{1/(n-1)}\bigg)\,.
\end{align*}

{\bf c)} We now consider the terms in which at least one factor is of the form $R_{\nu}+R_{\nu}^{-}$ and at least one factor 
is of the form $R_{\nu}^{+}$. For instance, let
\[
T_{9,\nu}:=P_{(n+1)\nu}^{+}\widehat{T}(R_{1,\nu}+R_{1,\nu}^{-},\ldots,R_{n-2,\nu}+R_{n-2,\nu}^{-},R_{n-1,\nu}^{+},I).
\]
The verification of the endpoint estimates is analogous to that in {\bf Step 8(b)}, but now the off-diagonal factor occurs 
in the $(n-1)$-st variable rather than in the last one. Repeating the argument of {\bf Step 8(b)}, we obtain the following. 
If $\widetilde{\beta}_0=0$ or $\widetilde{\beta}_1=0$, then, passing to a subsequence if necessary,
\[
\widetilde{\beta}\bigg(T_{9,\nu'} \colon \bigg(\prod_{i=1}^{n-1}E_i(F_m^i)\bigg)\times E_n(G_m)\to E(W_m)\bigg)\to 0
\quad \text{as \, $\nu'\to\infty$}.
\]
If $\widetilde{\beta}_0>0$ and $\widetilde{\beta}_1>0$, then, passing to a subsequence if necessary, we have
\begin{align*}
\limsup_{\nu'\to\infty}
\widetilde{\beta}\bigg(T_{9,\nu'} \colon \bigg(\prod_{i=1}^{n-1}E_i(F_m^i)\bigg) & \times E_n(G_m)\to E(W_m)\bigg) \\
& \le C\,\widetilde{\beta}_0
\varphi_{E_2}\bigg(\bigg(\frac{\widetilde{\beta}_1}{\widetilde{\beta}_0}\bigg)^{1/(n-1)}\bigg)\cdots
\varphi_{E_n}\bigg(\bigg(\frac{\widetilde{\beta}_1}{\widetilde{\beta}_0}\bigg)^{1/(n-1)}\bigg)\,.
\end{align*}
The same reasoning applies to every term containing at least one factor of the form $R_{\nu}+R_{\nu}^{-}$ 
and at least one factor of the form $R_{\nu}^{+}$.

{\bf d)} Finally, consider
\[
T_{10,\nu}:=P_{(n+1)\nu}^{+}\widehat{T}(R_{1,\nu}^{+}, \ldots, R_{n-1,\nu}^{+}, I)\,.
\]
This is the extreme case in which all the first $n-1$ variables are of off-diagonal type. Repeating the argument 
of {\bf Step 8(b)}, we obtain the following. If $\widetilde{\beta}_0=0$ or $\widetilde{\beta}_1=0$, then, passing to a subsequence if necessary,
\[
\widetilde{\beta}\bigg(T_{10,\nu'} \colon \bigg(\prod_{i=1}^{n-1}E_i(F_m^i)\bigg)\times E_n(G_m)\to E(W_m)\bigg)\to 0
\quad \text{as \, $\nu'\to\infty$}\,.
\]
If $\widetilde{\beta}_0>0$ and $\widetilde{\beta}_1>0$, then, passing to a subsequence if necessary, we have
\begin{align*}
\limsup_{\nu'\to\infty}
\widetilde{\beta}\bigg(T_{10,\nu'} \colon \bigg(\prod_{i=1}^{n-1}E_i(F_m^i)\bigg) & \times E_n(G_m)\to E(W_m)\bigg) \\
& \le C\,\widetilde{\beta}_0
\varphi_{E_2}\bigg(\bigg(\frac{\widetilde{\beta}_1}{\widetilde{\beta}_0}\bigg)^{1/(n-1)}\bigg)\cdots
\varphi_{E_n}\bigg(\bigg(\frac{\widetilde{\beta}_1}{\widetilde{\beta}_0}\bigg)^{1/(n-1)}\bigg)\,.
\end{align*}
Thus, the analysis of $P_{(n+1)\nu}^{+}\widehat{T}$ is complete.

\smallskip

{\bf Step 9}. For $P_{(n+1)\nu}^{-}\widehat{T}$, we argue exactly as in {\bf Step 8}, replacing $P_{(n+1)\nu}^{+}$ by $P_{(n+1)\nu}^{-}$ throughout. 
Consequently, the corresponding terms satisfy the same estimates.

\smallskip

{\bf Step 10}. Taking into account the decomposition \eqref{dec1} and the estimates obtained in the previous steps, we proceed as follows.

Assume first that $\widetilde{\beta}_j>0$, $j=0,1$. Since only finitely many terms appear in the decomposition \eqref{dec1}, by passing to a common 
subsequence if necessary, we may choose a subsequence $(\nu')$ such that all conclusions obtained in {\bf Steps 8} and {\bf 9} hold simultaneously 
along $(\nu')$. Moreover, by {\bf Steps 3--7}, all remaining terms tend to zero as $\nu'\to\infty$.

Combining these estimates, we obtain that for every $\varepsilon>0$, there exists $\nu'$ sufficiently large such that the sum of all terms arising in {\bf Steps 3--7} is bounded by 
$\varepsilon$, while the terms treated in {\bf Steps 8} and {\bf 9} satisfy the corresponding estimates obtained there. Combining these bounds with 
the estimate from {\bf Step 2}, we obtain
\begin{align*}
\widetilde{\beta}\big(T \colon (X_1^0,X_1^1)_{E_1} & \times \cdots \times (X_n^0,X_n^1)_{E_n} \to (Y^0,Y^1)_E\big) \\
& \leq C\,\widetilde{\beta}_0
\varphi_{E_2}\bigg(\bigg(\frac{\widetilde{\beta}_1}{\widetilde{\beta}_0}\bigg)^{1/(n-1)}\bigg)\cdots
\varphi_{E_n}\bigg(\bigg(\frac{\widetilde{\beta}_1}{\widetilde{\beta}_0}\bigg)^{1/(n-1)}\bigg)
+\varepsilon\,,
\end{align*}
where $C>0$ is independent of $T$.

Since $\varepsilon>0$ is arbitrary, it follows that
\begin{align*}
\widetilde{\beta}\big(T \colon (X_1^0,X_1^1)_{E_1} & \times \cdots \times (X_n^0,X_n^1)_{E_n} \to (Y^0,Y^1)_E\big) \\
& \leq C\,\widetilde{\beta}_0
\varphi_{E_2}\bigg(\bigg(\frac{\widetilde{\beta}_1}{\widetilde{\beta}_0}\bigg)^{1/(n-1)}\bigg)\cdots
\varphi_{E_n}\bigg(\bigg(\frac{\widetilde{\beta}_1}{\widetilde{\beta}_0}\bigg)^{1/(n-1)}\bigg)\,.
\end{align*}

Since, for each $2\leq i\leq n$, we have $J_{E_i}(\xo_i)=K_{E_i}(\xo_i)$ up to equivalence of norms, this is precisely the required estimate.

If $\widetilde{\beta}_j=0$ for either $j=0$ or $j=1$, then {\bf Steps 8} and {\bf 9} yield zero contribution along a suitable subsequence, 
while {\bf Steps 3--7} still tend to zero. Therefore,
\[
\widetilde{\beta}\big(T \colon (X_1^0,X_1^1)_{E_1} \times \cdots \times (X_n^0,X_n^1)_{E_n} \to (Y^0,Y^1)_E\big)=0.
\]
This completes the proof.
\end{proof}

As a consequence of the main theorem, we obtain the following one-sided compactness result for interpolated multilinear operators.

\begin{corollary}
Under the assumptions of Theorem {\rm\ref{main}}, assume that, for at least one $j \in \{0,1\}$, the operator
is compact, then the restriction of $T$ is compact from $(X_1^0,X_1^1)_{E_1} \times \cdots \times (X_n^0,X_n^1)_{E_n}$ 
to $(Y^0,Y^1)_E$. 
\end{corollary}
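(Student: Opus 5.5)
The corollary should follow directly from Theorem \ref{main} combined with property (iii) of the measure of noncompactness, namely that an $n$-linear operator is compact if and only if $\beta(T)=0$. By (ii) this is the same as $\widetilde{\beta}(T)=0$. The plan is to read the hypothesis of the corollary in terms of the endpoint quantities $\tilde{\beta}_j$. We then check that the ``otherwise'' clause of Theorem \ref{main} applies.

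\emph{Translating the hypothesis.} Suppose that, for some $j\in\{0,1\}$, the operator $T\colon X_1^j\times\cdots\times X_n^j\to Y^j$ is compact. Then $T(B_{X_1^j}\times\cdots\times B_{X_n^j})$ is precompact. By (iii) and (ii) this gives $\tilde{\beta}_j=\widetilde{\beta}(T\colon X_1^j\times\cdots\times X_n^j\to Y^j)=0$.

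\emph{Applying Theorem \ref{main}.} All structural hypotheses on $\xo_i$, $\yo$, $E_i$, $E$ and the convolution inequality are assumed verbatim. Since $\tilde{\beta}_0$ and $\tilde{\beta}_1$ are not both positive, we are in the ``otherwise'' case. This gives
\[
\widetilde{\beta}\big(T\colon K_{E_1}(\xo_1)\times J_{E_2}(\xo_2)\times\cdots\times J_{E_n}(\xo_n)\to K_E(\yo)\big)=0.
\]
Each $E_i$ is $K$-non-trivial and $(p_i,J)$-non-trivial, and $\Omega_{p_i}$ is bounded on $E_i$. Hence $K_{E_i}(\xo_i)=J_{E_i}(\xo_i)=(X_i^0,X_i^1)_{E_i}$ with equivalent quasi-norms. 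Similarly $K_E(\yo)$ is the space denoted $(Y^0,Y^1)_E$.

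\emph{Transferring to the stated spaces.} Changing to equivalent quasi-norms on the domain spaces replaces each unit ball by a set contained in a fixed multiple of the other unit ball. By multilinearity, the image of the product of balls is therefore contained in a scalar multiple of the original image. Precompactness is preserved, so $\widetilde{\beta}=0$ persists. Property (iii) then yields compactness of $T\colon (X_1^0,X_1^1)_{E_1}\times\cdots\times(X_n^0,X_n^1)_{E_n}\to(Y^0,Y^1)_E$.

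The only step needing care is the degenerate-case claim ``otherwise $\widetilde{\beta}(T)=0$'' in Theorem \ref{main}, which is what we rely on. In that proof, Steps 3--7 tend to zero regardless of the endpoint values. Steps 8 and 9 vanish along a subsequence when some $\tilde{\beta}_j=0$, by Lemma \ref{lemma2}(b) and Lemma \ref{lemma}. The Step 2 term $A_\nu$ is bounded by $\sigma_0\,\varphi_{E_2}((\sigma_1/\sigma_0)^{1/(n-1)})\cdots\varphi_{E_n}((\sigma_1/\sigma_0)^{1/(n-1)})$ for arbitrary $\sigma_j>\tilde{\beta}_j$. Letting $\sigma_0\to0$ or $\sigma_1\to0$ makes this bound vanish, because $\varphi_{E_i}(t)=o(\max(1,t))$ at $0$ and at $\infty$, exactly as at the end of Theorem \ref{conv}. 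With that confirmed, the corollary follows immediately.
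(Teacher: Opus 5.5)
Your proposal is correct and follows the argument the paper leaves implicit: endpoint compactness gives $\tilde\beta_j=0$, the ``otherwise'' clause of Theorem~\ref{main} then gives $\widetilde\beta(T)=0$, and (ii)--(iii), together with $K_{E_i}(\xo_i)=J_{E_i}(\xo_i)$ up to equivalent quasi-norms, yield compactness. Your extra check that the Step~2 bound $\sigma_0\prod_{i=2}^n\varphi_{E_i}\big((\sigma_1/\sigma_0)^{1/(n-1)}\big)$ vanishes as $\sigma_0\to 0$ or $\sigma_1\to 0$, via $\varphi_{E_i}(t)=o(\max(1,t))$, correctly fills in a detail the paper only asserts.
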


In particular, under the above compactness assumption, in the Banach setting, if $1 \leq p_1,\ldots,p_n,q \leq \infty$ satisfy
\[
0 \leq \frac1q \leq \frac1{p_1}+\cdots+\frac1{p_n}-(n-1)\,,
\]
then, for every $\theta \in (0,1)$, the restriction of $T$ is compact from $(X_1^0,X_1^1)_{\theta,p_1} \times \cdots \times (X_n^0,X_n^1)_{\theta,p_n}$
to $(Y^0,Y^1)_{\theta,q}$.

\vspace{0.3cm}

\noindent Mieczys{\l}aw Masty{\l}o \\
Faculty of Mathematics and Computer Science\\
Adam Mickiewicz University, Pozna\'n\\
61-614 Pozna{\'n}, Poland

\vspace{1 mm}

\noindent
E-mail: \,{\tt mieczyslaw.mastylo$@$amu.edu.pl} \\

\noindent
Eduardo Brandani da Silva  \\
Departamento de Matem\'atica \\
Universidade Estadual de Maring\'a--UEM \\
Av.~Colombo 5790 \\
Maring\'a - PR  \\
Brazil - 870300-110

\vspace{1 mm}

\noindent E-mail: \,{\tt ebsilva@uem.br}

\bigskip


\begin{thebibliography}{99}

\bibitem{benyi1}
\'A.~Bényi and T.~Oh, \emph{Smoothing of commutators for a Hörmander class of bilinear pseudodifferential operators}, 
J. Fourier Anal. Appl. \textbf{20}~(2014), 282--300.

\bibitem{benyi2}
\'A. Bényi, R.\,H.~Torres, \emph{Compact bilinear operators and commutators}, Proc. Amer. Math. Soc. \textbf{141}~(2013), 
3609--3621.

\bibitem{BL}
J.~Bergh and J.~L\"ofstr\"om, \emph{Interpolation Spaces. An Introduction}, Springer, Berlin 1976.

\bibitem{BC}
B.\,F.~Besoy and F.~Cobos, \emph{Interpolation of the measure of non-compactness of bilinear operators among quasi-Banach 
spaces}, J. Approx. Theory \textbf{243}~(2019), 25--44.

\bibitem{BK}
Y.~Brudnyi and N.~Kruglyak, \emph{Interpolation functors and interpolation spaces}, Volume 1, 
North-Holland, Amsterdam 1991.

\bibitem{Cal}
A.\,P.~Calder\'on, \emph{Intermediate spaces and interpolation, the complex method},
Studia Math. \textbf{24}~(1964), 113--190.

\bibitem{CFCK} 
F.~Cobos, L.\,M.~Fern\'andez-Cabrera and T.~K\"uhn, \emph{Interpolation of compact multilinear operators between quasi-Banach spaces}, 
J. Approx. Theory \textbf{314}~(2026), part 1, Paper No. 106222, 20 pp.


\bibitem{CLM1}
F.~Cobos, L.\,M.~Fern\'andez-Cabrera and A.~Martínez, \emph{Interpolation of compact bilinear operators among 
quasi-Banach spaces and applications}, Math. Nachr. \textbf{291}~(2018), 2168--2187.


\bibitem{EE}
D.\,E.~Edmunds and W.\,D.~Evans, \emph{Spectral Theory and Differential Operators}. Oxford Mathematical Monographs.
Oxford Science Publications. The Clarendon Press, Oxford University Press, New York, 1987.

\bibitem{KPR} 
N.\,J.~Kalton, N.\,T.~Peck and J.\,W.~Roberts, \emph{An $F$-space Sampler}, Cambridge 1984. 

\bibitem{LP}
J.-L.~Lions and J.~Peetre, \emph{Sur une classe d'espaces d'interpolation}, Inst. Hautes \'Etudes Sci. Publ. Math.
\textbf{19}~(1964), 5--68


\bibitem{MS}
M.~Masty{\l}o and E.\,B.~Silva, \emph{Interpolation of the measure of noncompactness of bilinear operators},
Trans. Amer. Math. Soc. \textbf{370}~(2018), no.~12, 8979--8997.

\bibitem{Oberlin}
D.~M.~Oberlin, \emph{A multilinear Young's inequality}, Canad. Math. Bull. \textbf{31}~(1988), no.~3, 380--384.

\bibitem{Rolewicz} 
S.~Rolewicz, \emph{Metric Linear Spaces}. PWN, Warszawa, 1972; 2-ed Ed., Warszawa 1984. 


\bibitem{Szw}
R.~Szwedek, \emph{Measure of non-compactness of operators interpolated by the real method},
Studia Math. \textbf{175}~(2006), no.~2, 157--174.


\bibitem{Zafran}
M.~Zafran, \emph{A multilinear interpolation theorem}, Studia Math. \textbf{62}~(1978), 107--124.
\end{thebibliography}
\end{document}